\documentclass[12pt]{amsart}

\usepackage[margin=3cm]{geometry}
\geometry{letterpaper}
\usepackage{graphicx}
\usepackage{amssymb}
\usepackage{epstopdf}
\usepackage{enumerate}

\DeclareGraphicsRule{.tif}{png}{.png}{`convert #1 `dirname #1`/`basename #1 .tif`.png}

\newtheorem{thm}{Theorem}[section]
\newtheorem{lemma}[thm]{Lemma}

\newtheorem{prop}[thm]{Proposition}
\newtheorem{cor}[thm]{Corollary}
\newtheorem{remark}[thm]{Remark}

\begin{document}

\title[Escape Rates]{Escape Rates for Gibbs measures}

\author{Andrew Ferguson}
\address{Andrew Ferguson\\Mathematics Institute\\Zeeman Building\\
University of Warwick\\
Coventry\\CV4 7AL\\UK.}
\email{a.j.ferguson@warwick.ac.uk}

\author{Mark Pollicott}
\address{Mark Pollicott\\Mathematics Institute\\Zeeman Building\\
University of Warwick\\
Coventry\\CV4 7AL\\UK.}
\email{m.pollicott@warwick.ac.uk}

\subjclass[2000]{Primary  28A80, 37D35, }
\keywords{escape rate, open dynamical system, dimension}

\date{\today}

\maketitle

\begin{abstract}In this paper we study the asymptotic behaviour of the escape rate of a Gibbs measure supported on a conformal repeller through a small hole.  There are additional applications to the convergence of Hausdorff dimension of the survivor set.\end{abstract}

\section{Introduction}

Given any  transformation $T: X \to X$ preserving an ergodic probability measure $\mu$ and any Borel set $A \subset X$ the escape rate quantifies the asymptotic behaviour of the measure of the set of points $x\in X$ for which none of the first $n$ terms in the orbit intersect $U$.  Bunimovich and  Yurchenko \cite{BunYur08} considered the fundamental case of the doubling map and Haar measure, and where $U$ is a dyadic interval.  Subsequently, Keller and Liverani \cite{KelLiv09} proved a  general perturbation result which, provided the correct functional setup holds, shows a similar formula holds.  It was then shown that these hypotheses hold when $T$ is an expanding interval map and $\mu$ the absolutely continuous invariant probability measure.  Other papers related to this this topic include \cite{BahBos10, BunDet07, DemYou06, LivMau03} and reference therein.

In this paper, we prove analogous results to those found in \cite{BunYur08} in the more general setting of Gibbs measures supported on conformal repellers.  Much of the analysis is undertaken in the setting of subshifts of finite type, this not only allows us to prove similar results for a broad class of maps which can be modelled symbolically but also improve on the work of Lind \cite{Lind89} who considered the convergence of topological entropy for a topologically mixing subshift.     

Another interesting aspect of our analysis is the connection  with the work of Hirata \cite{Hir93} on the  exponential law for first return times for Axiom A diffeomorphisms.  Some of the ingredients in our approach were suggested by Hirata's paper, although we had to significantly modify the actual details.

Let $\mathcal{M}$ be a Riemannian manifold and $f:\mathcal{M}\to\mathcal{M}$ a $C^{1}$-map.  Let $J$ be a compact subset of $\mathcal{M}$ such that $f(J)=J$.  We say that the pair $(J,f)$ is a conformal repeller if
\begin{enumerate}
\item $f|_{J}$ is a conformal map.
\item there exists $c>0$ and $\lambda>1$ such that $\|df^{n}_x v\|\geq c\lambda^n\|v\|$ for all $x\in J$, $v\in T_x\mathcal{M}$, and $n\geq 1$. 	
\item $f$ is topologically mixing on $J$.
\item $J$ is maximal, i.e. there exists an open neighbourhood $V\supset J$ such that
	\begin{equation}\nonumber J=\{x\in V\,:\,f^{n}(x)\in V\,\,\text{for all }n\geq 0\}.\end{equation}
\end{enumerate}

Let $\phi:J\to\mathbb{R}$ be $\alpha$-H\"older and let $\mu$ denote the associated equilibrium state, i.e.
\begin{equation}\nonumber P(\phi)=\sup\left\{h_{\nu}(f)+\int \phi d\nu\,:\,f_*(\nu)=\nu,\,\nu(J)=1\right\}=h_{\mu}(f)+\int \phi d\mu,\end{equation}
where $h_{\nu}(f)$ denotes the Kologomorov-Sinai entropy of the measure $\nu$ (see \cite{Wal82} for further details).  

Fix $z\in J$ for $\epsilon>0$ we define the escape rate of $\mu$ through $B(z,\epsilon)$, (i.e. the rate at which mass `escapes' or `leaks' through the hole $B(z,\epsilon)$), by

\begin{equation}\nonumber r_\mu(B(z,\epsilon))=-\limsup_{k\to\infty}\frac{1}{k}\log \mu\{x\in J\,:\,f^i(x)\not\in B(z,\epsilon), 0\leq i \leq k-1\}.\end{equation}

Our first result concerns the asymptotic behaviour of $r_\mu(B(z,\epsilon))$ for small $\epsilon$.

\begin{thm}Let $(J,f)$ be a conformal repeller, $\phi:J\to\mathbb{R}$ H\"older continuous, and $\mu$ the associated equilibrium state, fix $z\in J$, then

	\begin{equation}
	\nonumber
	\lim_{\epsilon\rightarrow 0} \frac{r_\mu (B(z,\epsilon)) }{\mu(B(z,\epsilon)) }= d_\phi(z)=
	\begin{cases} 1 &\text{if }z \text{ is not periodic} \\
	1-e^{\phi^p(z)-p P(\phi)}  & \text{if }z\text{ has prime period p}\end{cases}\end{equation}
	where $\phi^p(z)=\phi(z)+\phi(f(z))+\cdots+\phi(f^{p-1}(z)).$\label{escratesexp}\end{thm}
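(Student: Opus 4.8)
The plan is to pass to a symbolic model and reduce the statement to a counting/measure estimate for cylinders avoiding a shrinking hole. Since $(J,f)$ is a conformal repeller, there is a Markov partition giving a finite-to-one Hölder semiconjugacy $\pi:\Sigma_A\to J$ from a topologically mixing subshift of finite type, under which $\mu$ pulls back to a Gibbs state for a Hölder potential (still called $\phi$), and $P(\phi)$ is unchanged. Because the boundary of the Markov partition is $\mu$-null, for all small $\epsilon$ the ball $B(z,\epsilon)$ is comparable (in $\mu$-measure, up to a bounded multiplicative factor that tends to $1$) to a union of cylinders; more precisely, I would sandwich $B(z,\epsilon)$ between cylinders $[\omega(\epsilon)]$ of length $n(\epsilon)\to\infty$ as $\epsilon\to 0$, where $\pi^{-1}(z)$ determines the symbol sequence $\omega$. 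So it suffices to compute, for a cylinder $C_n=[\omega_0\cdots\omega_{n-1}]$ shrinking to a point $\bar z=\pi^{-1}(z)$, the asymptotics of $r_\mu(C_n)/\mu(C_n)$ as $n\to\infty$ and show the two sandwiching estimates have the same limit.

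The core is a transfer-operator / generating-function argument. Let $\mathcal L$ be the Ruelle operator for $\phi$ normalized so that $P(\phi)=0$ (subtracting $P(\phi)$, and conjugating by the eigenfunction so that $\mathcal L 1=1$, $\mathcal L^*\mu=\mu$). For the open system with hole $C_n$, let $\mathcal L_n$ be the sub-Markov operator $\mathcal L_n g = \mathcal L(1_{C_n^c}\cdot g)$; then $\mu\{x:f^i(x)\notin C_n,\ 0\le i\le k-1\}=\int \mathcal L_n^k 1\, d\mu$, so $r_\mu(C_n)=-\log\rho_n$, where $\rho_n$ is the leading eigenvalue of $\mathcal L_n$. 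By Keller–Liverani perturbation theory (the hypotheses hold here: $\mathcal L$ has a spectral gap on the Hölder space, and $\|\mathcal L-\mathcal L_n\|$ is controlled by $\mu(C_n)$ in the appropriate mixed norm), one gets
\begin{equation}\nonumber
1-\rho_n = \mu(C_n)\,\bigl(d_\phi + o(1)\bigr)\qquad\text{as }n\to\infty,
\end{equation}
where the constant $d_\phi$ is given by the Keller–Liverani formula
\begin{equation}\nonumber
d_\phi=\lim_{k\to\infty}\frac{\mu\bigl(C_n\cap\{x:f^i(x)\notin C_n,\ 1\le i\le k\}\bigr)}{\mu(C_n)} ,
\end{equation}
i.e. $1$ minus the asymptotic proportion of the hole that returns to the hole. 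Since $r_\mu(C_n)=-\log\rho_n=(1-\rho_n)(1+o(1))$, this yields $r_\mu(C_n)/\mu(C_n)\to d_\phi$. The remaining task is to identify $d_\phi$ with the quantity in the theorem.

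To evaluate $d_\phi$ I would split according to whether $\bar z$ is periodic. If $\bar z$ is not periodic, then for $n$ large the cylinder $C_n$ is disjoint from its first $k$ preimages under the shift for every fixed $k$ (a point that is not periodic cannot have $\sigma^j\bar z$ in an arbitrarily small neighbourhood of $\bar z$ for $1\le j\le k$), so the return fraction is $0$ and $d_\phi=1$. If $\bar z$ has prime period $p$, the dominant contribution to $C_n\cap\sigma^{-j}C_n$ for large $n$ comes only from $j$ a multiple of $p$, and the $j=p$ term dominates: $\mu(C_n\cap\sigma^{-p}C_n)/\mu(C_n)$ converges, by the Gibbs property (bounded distortion) and the chain rule for $\phi^p$ along the periodic orbit, to $e^{\phi^p(\bar z)-pP(\phi)}$; summing the geometric-type series over multiples of $p$ and using the $\limsup$/telescoping in the Keller–Liverani formula gives $d_\phi=1-e^{\phi^p(z)-pP(\phi)}$. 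Finally I would check that both cylinder sandwiches for $B(z,\epsilon)$ produce the same $d_\phi$ — this is immediate in the non-periodic case, and in the periodic case it holds because $\phi^p$ is continuous, so $\phi^p$ evaluated on the sandwiching cylinders converges to $\phi^p(z)$.

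I expect the main obstacle to be the uniformity in the Keller–Liverani step: one needs the $o(1)$ error term in $1-\rho_n=\mu(C_n)(d_\phi+o(1))$ to be genuinely uniform as the hole shrinks through the family of cylinders (and then through balls), which requires checking that the Lasota–Yorke constants for $\mathcal L_n$ and the relevant norm estimates are bounded independently of $n$, and that the "triple norm" bound $\|\mathcal L-\mathcal L_n\|\lesssim \mu(C_n)$ holds with a uniform constant. Controlling the approach of $B(z,\epsilon)$ by cylinders — i.e. that $\mu(\partial B(z,\epsilon))=0$ for a.e.\ $\epsilon$ and that the inner/outer cylinder approximations have comparable measure — is a secondary technical point, handled via the Gibbs property and the fact that the Markov partition boundary is $\mu$-null.
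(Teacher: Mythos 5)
Your outline coincides with the paper's strategy at every major junction: symbolic coding, realising the hole as a union of cylinders, treating $\mathcal{L}_n w=\mathcal{L}(\chi_{U_n^c}w)$ as a singular perturbation in the Keller--Liverani framework, identifying the eigenvalue drop as $\mu(U_n)$ times a non-return fraction, and sandwiching balls by cylinder unions. However, the two steps you defer are exactly where the content of the proof lies, and as written they are gaps. First, ``$\mathcal{L}$ has a spectral gap on the H\"older space'' does not suffice: $\chi_{U_n^c}$ multiplies a Lipschitz function into one whose Lipschitz constant is of order $\theta^{-n}$, so the perturbation is singular in the strong norm and one must construct a strong/weak norm pair by hand. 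The paper takes Keller's $L^1$-oscillation space $\mathcal{B}_\theta$ as the strong space and a weak norm $\|w\|_h=\sup_{j,m}\theta^{-m}\int_{\sigma^{-j}(U_m)}|w|\,d\mu+\|w\|_1$ tailored to the family of holes; the uniform Lasota--Yorke inequality (Lemma \ref{LasotaYorke}) and the bound $\||\mathcal{L}-\mathcal{L}_n|\|\le c(\rho\theta^{-1})^n$ (Lemma \ref{pert}, resting on the quasi-independence estimate $\mu(U_n\cap\sigma^{-j}(U_m))\le c\mu(U_n)\mu(U_m)$ of Proposition \ref{gibbs}) occupy most of Section 3. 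Your identification of $d_\phi$ by ``summing the geometric-type series'' likewise needs $\|\Psi_n^k1\|_\infty\le cq^k$ \emph{uniformly in} $n$ (Proposition \ref{resolvantbound1}); the paper points out that this uniformity is precisely the step that is erroneous in Hirata's earlier argument. Rather than quoting an off-the-shelf derivative formula, the paper computes $\lim_n\int E_n1\,dm_n$ directly (for periodic $z$ this uses the structural assumption $\sigma^{-p}(U_n)\cap[z_0\cdots z_{p-1}]\subseteq U_n$) and obtains the normalisation by $\mu(U_n)$ from Kac's theorem, $\int T_n\,dm_n=1/\mu(U_n)$ (Lemma \ref{nonper2}).

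Second, the geometric approximation is not a routine consequence of $\mu$-null partition boundaries. To get unions $U\supset B(z,\epsilon)$ of $k$-cylinders with $\mu(U)\le(1-\eta)^{-1}\mu(B(z,\epsilon))$ one needs a quantitative annulus estimate, $\mu(B(z,\epsilon)\setminus B(z,(1-\delta)\epsilon))\le c\delta^D\mu(B(z,\epsilon))$; the paper derives this $D$-annular decay from the doubling (Federer) property of Gibbs measures on conformal repellers (Pesin--Weiss) via Buckley's result. Moreover $\pi^{-1}(z)$ is in general a finite set rather than a single point, so the symbolic hole is a union of cylinders around several centres: for non-periodic $z$ the symbolic theorem must be extended to finitely many non-periodic centres, and for periodic $z$ one must first choose a Markov partition with $z$ in its interior so that $\pi^{-1}(z)$ is a single periodic point. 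Your instinct about where the danger lies (uniformity of the Lasota--Yorke and triple-norm estimates) is correct, but the proposal leaves both load-bearing estimates, and the annulus/multiplicity issues in the transfer to balls, unproved.
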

  
We also obtain an asymptotic formula the Hausdorff dimension of the survivor set: 

\begin{equation}\nonumber J_{\epsilon}=\{x\in J\,:\,f^{k}(x)\not\in B(z,\epsilon), \text{for all }k\geq 0\},\end{equation}
i.e. all points whose orbits are $\epsilon$-bounded away from $z$.

Suppose now that $f\in C^{1+\alpha}(J)$ for some $\alpha>0$.  Let $\mu$ denote the equilibrium state related to the potential $\phi=-s\log|f^\prime|$, where $s={\rm dim}_H(J)$.  For $\epsilon>0$ we let $s_{\epsilon}$ denote the Hausdorff dimension of the set $J_{\epsilon}$.

\begin{thm}Let $(J,f)$ be a conformal repeller with $f\in C^{1+\alpha}(J)$.  Let $\phi=-s\log|f^\prime|$ and let $\mu$ denote the associated equilibrium state.  Fix $z\in J$, then
\begin{equation}\nonumber \lim_{\epsilon\to 0}\frac{s-s_\epsilon}{\mu(B(z,\epsilon))}=\frac{d_\phi(z)}{\int \log|f^\prime|d\mu}.\end{equation}	\label{dimension}
	\end{thm}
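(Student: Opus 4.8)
The plan is to relate the Hausdorff dimension $s_\epsilon$ of the survivor set $J_\epsilon$ to the pressure function via Bowen's equation, and then differentiate. By the thermodynamic formalism for conformal repellers, $s_\epsilon$ is characterized as the unique zero of the pressure of the restricted system: if $P_\epsilon$ denotes the topological pressure for the map $f$ restricted to the (non-compact, but dynamically defined) survivor set $J_\epsilon$ — equivalently, the pressure on the subsystem obtained by deleting all cylinders meeting $B(z,\epsilon)$ — then Bowen's formula gives $P_\epsilon(-s_\epsilon\log|f'|)=0$, while $P(-s\log|f'|)=0$ by definition of $s=\dim_H(J)$. The first step is therefore to set up this symbolic coding: pass to a subshift of finite type modelling $(J,f)$, realize $B(z,\epsilon)$ as a union of cylinders (up to the usual $\epsilon$-comparability), and identify the survivor set with the points avoiding the hole, so that $r_\mu(B(z,\epsilon))$ is exactly $-P_\epsilon(\phi) + P(\phi)$ for a general Hölder potential $\phi$ normalized so that $P(\phi)=0$ — this is precisely the content that Theorem \ref{escratesexp} controls asymptotically.

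Next I would use analytic perturbation theory for the transfer operator. Writing $P(t,\epsilon)$ for the pressure of $-t\log|f'|$ on the punctured system $J_\epsilon$, the map $(t,\epsilon)\mapsto P(t,\epsilon)$ is smooth in $t$ with $\partial_t P(t,\epsilon) = -\int\log|f'|\,d\mu_{t,\epsilon} < 0$, where $\mu_{t,\epsilon}$ is the corresponding equilibrium state. Since $s_\epsilon$ solves $P(s_\epsilon,\epsilon)=0$ and $s$ solves $P(s,0)=0$, an implicit-function / mean-value argument gives
\begin{equation}\nonumber
s - s_\epsilon = \frac{P(s,0) - P(s,\epsilon)}{\int\log|f'|\,d\mu_{s,\xi_\epsilon}}
\end{equation}
for some $\xi_\epsilon$ between the relevant parameters. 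Now $P(s,0)-P(s,\epsilon) = P(\phi) - P_\epsilon(\phi) = r_\mu(B(z,\epsilon))$ with $\phi=-s\log|f'|$ (after the normalization $P(\phi)=0$, noting $h_\mu(f)+\int\phi\,d\mu=0$ is consistent with the dimension equation), so dividing by $\mu(B(z,\epsilon))$ and letting $\epsilon\to0$, the numerator ratio tends to $d_\phi(z)$ by Theorem \ref{escratesexp}. It remains to check that $\int\log|f'|\,d\mu_{s,\xi_\epsilon}\to\int\log|f'|\,d\mu$; this follows from continuity of equilibrium states (weak-$*$) under the perturbation, since the deleted cylinders have $\mu$-measure $O(\mu(B(z,\epsilon)))\to 0$ and the transfer operators converge in the appropriate norm.

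The main obstacle I anticipate is making the perturbation $J\rightsquigarrow J_\epsilon$ genuinely analytic or at least $C^1$ in a usable sense: deleting a hole is not a smooth perturbation of the potential but a combinatorial change of the subshift, so one must either (a) invoke the Keller–Liverani spectral stability framework (as the introduction signals) to get $P(s,\epsilon)\to P(s,0)$ together with a quantitative first-order expansion in $\mu(B(z,\epsilon))$, or (b) bypass differentiability entirely and run a direct two-sided estimate: bound $s-s_\epsilon$ above and below by comparing Gibbs sums $\sum e^{-s_\epsilon S_n\log|f'|}$ over surviving cylinders against $\sum e^{-s S_n\log|f'|}$, using the bounded distortion property of conformal repellers to convert the difference of exponents into $(s-s_\epsilon)\cdot n\cdot\int\log|f'|\,d\mu$ up to lower-order terms, and closing the loop with the escape-rate asymptotics. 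I expect route (b), combined with the already-established Theorem \ref{escratesexp}, to be the cleanest; the delicate point there is uniformity of the distortion and large-deviation estimates as both $n\to\infty$ and $\epsilon\to0$, which is exactly where the periodic-versus-aperiodic dichotomy in $d_\phi(z)$ enters and must be handled consistently with the first theorem.
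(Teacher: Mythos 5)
Your main development follows essentially the same route as the paper: characterize $s_\epsilon$ via Bowen's equation $P(-s_\epsilon\log|f'|)=0$ on the punctured system, expand the pressure in $t$ around $t=s$ with $\partial_t P = -\int\log|f'|\,d\mu_{t,\epsilon}$, feed in the escape-rate asymptotic for the numerator $P(s,0)-P(s,\epsilon)$, and handle the singular nature of the hole via the Keller--Liverani spectral stability theory acting on transfer operators — this is exactly the paper's route (a). Your alternative route (b), a direct Gibbs-sum/bounded-distortion comparison bypassing perturbation theory, is \emph{not} what the paper does, and your stated preference for (b) points away from the actual argument; the reason the paper invests in the operator-theoretic machinery is that the derivative convergence you assert casually (``$\int\log|f'|\,d\mu_{s,\xi_\epsilon}\to\int\log|f'|\,d\mu$ follows from continuity of equilibrium states'') is the crux and is genuinely non-obvious under a singular perturbation of the shift.

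Two implementation differences between your sketch and the paper are worth noting. First, the paper proves the result initially for holes that are finite unions of Markov-partition refinements (Proposition \ref{dimensiondiscrete}) and only then sandwiches a metric ball $B(z,\epsilon)$ between inner and outer cylinder unions $U_n\subset B(z,\epsilon_n)\subset V_n$ satisfying $(1-\eta)\mu(V_n)\le\mu(B(z,\epsilon_n))\le(1+\eta)\mu(U_n)$; your ``up to the usual $\epsilon$-comparability'' is gesturing at this, but the two-sided approximation plus monotonicity of Hausdorff dimension is a needed step, not a throwaway. Second, where you apply the mean value theorem to get $s-s_\epsilon = [P(s,0)-P(s,\epsilon)]/\int\log|f'|\,d\mu_{s,\xi_\epsilon}$ with a floating intermediate point $\xi_\epsilon$, the paper instead Taylor-expands $\lambda_{t,n}$ to \emph{second} order about the fixed point $t=s$: it proves $\lambda'_{t,n}\to\lambda'_t$ at a fixed $t$ (Lemma \ref{derpres}, which already needs the Keller--Liverani triple-norm estimates to show $E_{t,n}\to E_t$ and hence weak convergence of $\mu_{t,n}$), and separately a uniform bound $\sup_{n}\sup_{t\in(s-\delta,s+\delta)}|\lambda''_{t,n}|<\infty$ (Lemma \ref{secderpres}). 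This decouples the two limits: the derivative only has to be controlled at $t=s$, and the error from displacing $\xi_\epsilon$ to $s$ is absorbed by the second-derivative bound. Your first-order version works but implicitly needs the joint convergence $\lambda'_{\xi_\epsilon,\epsilon}\to\lambda'_s$ along a diagonal, which costs you the same uniformity; the paper's version simply makes that cost explicit. Finally, your worry that the periodic/aperiodic dichotomy in $d_\phi(z)$ must be ``handled consistently'' within the dimension argument is unfounded — it enters only through the numerator ratio via Theorem \ref{escrates}/\ref{toppres} and plays no separate role here.
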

	
\begin{remark}A similar formula was obtained by Hensley \cite{Hen92} in the setting of continued fractions.\end{remark}
	
The paper is structured as follows: in section 2 we apply the Theorems \ref{escratesexp} and \ref{dimension} to concrete examples.  In section 3 we study the spectral properties of transfer operators acting on a certain class of Banach spaces.  Section 4 contains a perturbation result, while in section 5 we prove the result in the analogue of Theorem \ref{escratesexp} in the setting of subshifts of finite type.  Finally sections 6 and 7 contain the proofs of Theorems \ref{escratesexp} and \ref{dimension} respectively.

\section{Examples}
To illustrate the main results we briefly consider two simple examples.

\subsection{Hyperbolic Julia sets}

Let $f:\widehat{\mathbb{C}}\to\widehat{\mathbb{C}}$ be a rational map of degree $d\geq 2$, where $\hat{\mathbb{C}}$ denotes the Riemann sphere.  The Julia set of $R$ is the closure of the repelling periodic points of $f$, i.e.

\begin{equation}\nonumber J=cl\left(\{z\in\widehat{\mathbb{C}}\,:\,f^p(z)=z,\text{ for some  }p\geq 1 \text{ and }|(f^p)^\prime (z)|>1\}\right).\end{equation}

The map $f:J\to J$ is a conformal expanding map and the results of the previous section apply.  As an example, the map $f(z)=z^2+c$ for $|c|<1/4$ is hyperbolic.  Define $\phi:J\to\mathbb{R}$ by $\phi(z)=-s\log|2z|$, where $s$ denotes the Hausdorff dimension of $J$.  Let $\mu$ denote the associated equilibrium state.  Setting $z=\frac{1+\sqrt{1-4c}}{2}$, then we see that $f(z)=z$ and $|f^{'}(z)|>1$ and accordingly Theorem \ref{escratesexp} implies that
\begin{equation}\nonumber \lim_{\epsilon\to 0}\frac{r_\mu(B(z,\epsilon))}{\mu(B(z,\epsilon))}=1-\frac{1}{|2z|^{s}}.\end{equation}

\subsection{One dimensional Markov Maps}

Assume that there exists a finite family of disjoint closed intervals $I_1,I_2,\ldots,I_m\subset [0,1]$ and a $C^{1+\alpha}$ map $f:\bigcup_{i} I_i\to [0,1]$ such that
\begin{enumerate}
	\item for every $i$, there is a subset $P=P(i)$ of indices with $f(I_i)\cap\bigcup_{i} I_i=\bigcup_{k\in P}I_k$.
	\item for every $x\in \cup_i int(I_i)$, the derivative of $f$ satisfies $|f^\prime(x)| \geq \rho$ for some fixed $\rho>0$.
	\item there exists $\lambda>1$ and $n_0>0$ such that if $f^m(x)\in\cup_i I_i$, for all $0\leq m \leq n_0-1$ then $|(f^{n_0})^\prime(x)|\geq \lambda$.\end{enumerate}
Let $J=\{x\in [0,1]: f^n(x)\in\cup_i I_i \text{ for all }n\in\mathbb{N}\}$.  The set $J$ is a repeller for the map $f$ and conformality follows from the domain being one-dimensional.

If we take $I_1=[0,1/3]$, $I_2=[2/3,1]$ and let $f(x)=3x ({\rm mod} 1)$, the associated repeller $J$ is the middle-third Cantor set.  
Let $z=1/4$, then $z\in J$ and has prime period $2$.
Set $\phi(x)=-\log(2)$, and let $\mu$ denote the associated equilibrium state. Then Theorem \ref{escratesexp} implies that

\begin{equation}\nonumber \lim_{\epsilon\to 0}\frac{r_\mu(B(1/4,\epsilon))}{\mu(B(1/4,\epsilon))}=1-\frac{1}{2^2}=\frac{3}{4}.\end{equation}
	
For $\epsilon>0$ we set
\begin{equation}\nonumber J_\epsilon=\{x\in J\,:\,f^{k}(x)\not\in B(1/4,\epsilon)\text{ for }k=0,1,2,\ldots\}.\end{equation}
	
Let $s_\epsilon={\rm dim_H}(J_\epsilon)$ and $s=\log(2)/\log(3)$ then Theorem \ref{dimension} implies that

\begin{equation}\nonumber \lim_{\epsilon\to 0}\frac{s-s_\epsilon}{\mu(B(1/4,\epsilon))}=\frac{3}{4\log(3)}.\end{equation}

\section{Spectral properties of the transfer operator}

In this section we study the spectral properties of the transfer operator.  We first fix notation which will be used for the rest of the paper.  Throughout the rest of  this paper $c$ will denote a positive and finite constant which may change in value with successive uses.  Let $A$ denote an irreducible and aperiodic $l\times l$ matrix of zeroes and ones, i.e. there exists a positive integer $d$ such that $A^{d}>0$.  We define the subshift of finite type (associated with matrix $A$) to be 

\begin{equation}\nonumber\Sigma=\{(x_n)_{n=0}^\infty\,:\,A(x_n,x_{n+1})=1,\text{ for all }n\}.\end{equation}  
	
If we equip the set $\{0,1,\ldots,l-1\}$ with the discrete topology then $\Sigma$ is compact in the corresponding Tychonov product topology.  The shift $\sigma:\Sigma\to\Sigma$ is defined by $\sigma(x)=y$, where $y_n=x_{n+1}$ for all $n$, i.e. the sequence is shifted one place to the left and the first entry deleted.  

For $\theta\in (0,1)$ we define a metric on $\Sigma$ by $d_\theta(x,y)=\theta^{m}$, where $m$ is the least positive integer (assuming that such a $m$ exists) with $x_m\neq y_m$, otherwise we set $d_\theta(x,x)=0$.  Equipped with the metric $d_\theta$, the space $(\Sigma,d_\theta)$ is complete, and moreover the topology induced by $d_\theta$ agrees with the previously mentioned Tychonov product topology.  Finally, for $x\in \Sigma$ and a positive integer $n\geq 1$ we define the cylinder of length $n$ centred on $x$ to be the set $[x]_n=[x_0,x_1,\ldots,x_{n-1}]=\{y \in\Sigma\,:\,y_i=x_i\text{ for }i=0,1,\ldots,n-1\}$.

Fix a $d_\theta$-Lipschitz continuous function $\phi:\Sigma\to\mathbb{R}$, and recall that we let $\mu$ denote its equilibrium state defined in the introduction, i.e.,

\begin{equation}\nonumber P(\phi):=\sup\left\{h_{\nu}+\int \phi d\nu\,:\,\sigma_*(\nu)=\nu,\,\nu(\Sigma)=1\right\}=h_{\mu}+\int \phi d\mu.\end{equation}

We let 
\begin{equation}\nonumber L^1(\mu):=\left\{w:\Sigma\to\mathbb{C}\,:\,w\text{ is measurable and }\int |w| d\mu<\infty\right\},\end{equation}
which equipped with the norm $\|w\|_1=\int |w| d\mu$ is a Banach space.  We now describe a particular subspace of $L^1(\mu)$ on which the transfer operator will act: for $w\in L^{1}(\mu)$, $x\in\Sigma$ and a positive integer $m$ we set 

\begin{equation}\nonumber {\rm osc}(w,m,x)={\rm esssup}\{|w(y)-w(z)|\,:\,y,z\in [x]_m\}.\end{equation}  

We introduce the semi-norm

\begin{equation}\nonumber |w|_{\theta}=\sup_{m\geq 1} \theta^{-m} \|  {\rm osc}(w,m,\cdot)\|_1.\end{equation}
	
We let 

\begin{equation}\nonumber \mathcal{B}_{\theta}=\{w\in L^{1}(\mu)\,:\,|w|_{\theta}<\infty\}.\end{equation}
	
It is worth noting if we were to take the supremum norm $\|\cdot\|_\infty$ in place of the $L^1$ norm then the space coincides with Lipschitz continuous functions (with respect to the metric $d_\theta$).  

We equip $B_{\theta}$ with the norm

\begin{equation}\nonumber \|w\|_{\theta}=|w|_{\theta}+\|w\|_1.\end{equation}

This space was first introduced by Keller \cite{Kel85}, in a more general framework, where the following result was also proved:

\begin{prop}[Keller]The space $(\mathcal{B}_{\theta},\| \cdot \|_{\theta})$ is complete.  Furthermore, the set $\{w\in\mathcal{B}_{\theta}\,:\,\|w\|_{\theta}\leq c\}$ is $L^1$-compact for any $c>0$. \label{Kellercompact}\end{prop}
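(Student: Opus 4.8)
The plan is to prove the two assertions separately, both resting on the elementary observation that for fixed $m$ the map $x\mapsto {\rm osc}(w,m,x)$ depends only on the cylinder $[x]_m$, so that $\|{\rm osc}(w,m,\cdot)\|_1=\sum_{C}\mu(C)\,{\rm osc}(w,m,x_C)$, the sum running over the finitely many cylinders $C$ of length $m$ (with $x_C\in C$ arbitrary). The engine behind both parts is a lower semicontinuity estimate: if $w_n\to w$ $\mu$-a.e., then ${\rm osc}(w,m,x)\le\liminf_n{\rm osc}(w_n,m,x)$ for $\mu$-a.e.\ $x$ and every $m$. To see this one removes a single $\mu$-null set off which (i) all the $w_n$ converge pointwise and (ii) for every $n$ and every cylinder the essential supremum defining ${\rm osc}(w_n,m,\cdot)$ is realised as a genuine supremum; this is possible since there are only countably many cylinders in total. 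On the complement of that null set the inequality follows by passing to the limit inside $|w(y)-w(z)|$ and taking suprema over $y,z$ in the cylinder.

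For completeness I would take a $\|\cdot\|_\theta$-Cauchy sequence $(w_n)$. Since $\|\cdot\|_1\le\|\cdot\|_\theta$ it is $L^1$-Cauchy, hence converges in $L^1(\mu)$ to some $w$; passing to a subsequence we may assume $w_n\to w$ $\mu$-a.e. Fatou's lemma applied to the lower semicontinuity estimate gives $\|{\rm osc}(w,m,\cdot)\|_1\le\liminf_n\|{\rm osc}(w_n,m,\cdot)\|_1$, so $\theta^{-m}\|{\rm osc}(w,m,\cdot)\|_1\le\sup_n|w_n|_\theta<\infty$; taking the supremum over $m$ shows $w\in\mathcal{B}_\theta$. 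Applying the same estimate to the differences $w_n-w_k$ and letting $k\to\infty$ (through an a.e.-convergent subsequence) yields $|w_n-w|_\theta\le\limsup_{k,l\to\infty}|w_k-w_l|_\theta$, which tends to $0$; together with $\|w_n-w\|_1\to 0$ this gives $\|w_n-w\|_\theta\to 0$, proving completeness.

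For the compactness statement, fix $c>0$ and set $S=\{w\in\mathcal{B}_\theta:\|w\|_\theta\le c\}$. The lower semicontinuity estimate, combined with the $L^1$-continuity of $\|\cdot\|_1$ and the superadditivity of $\liminf$, shows $|w|_\theta+\|w\|_1\le\liminf_n(|w_n|_\theta+\|w_n\|_1)\le c$ whenever $w_n\in S$ and $w_n\to w$ in $L^1$, so $S$ is $L^1$-closed; it thus suffices to prove $S$ is totally bounded in $L^1(\mu)$. Let $\mathcal{F}_m$ be the finite $\sigma$-algebra generated by the length-$m$ cylinders and $E_m w=E(w\mid\mathcal{F}_m)$ the conditional expectation, equal to $\mu(C)^{-1}\int_C w\,d\mu$ on each cylinder $C$. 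A Fubini argument gives the a.e.\ pointwise bound $|w(x)-E_m w(x)|\le{\rm osc}(w,m,x)$, whence $\|w-E_m w\|_1\le\|{\rm osc}(w,m,\cdot)\|_1\le\theta^m|w|_\theta\le\theta^m c$, uniformly over $w\in S$. Since $E_m$ is an $L^1$-contraction, $\{E_m w:w\in S\}$ is a bounded, hence totally bounded, subset of the finite-dimensional space of $\mathcal{F}_m$-measurable functions; as $S$ lies within $L^1$-distance $\theta^m c$ of this set for every $m$, $S$ is totally bounded, and being a closed totally bounded subset of the complete space $L^1(\mu)$ it is $L^1$-compact.

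The main obstacle is the careful bookkeeping with essential suprema in the lower semicontinuity estimate: one must guarantee that a single $\mu$-null set suffices to pass simultaneously to pointwise limits for all $n$ and to realise every esssup defining ${\rm osc}(w_n,m,\cdot)$ as an ordinary supremum, across all $m$ at once. Everything else is a routine assembly of Fatou's lemma, the contraction property of conditional expectation, and finite-dimensionality of the cylinder-function spaces.
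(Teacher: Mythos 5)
Your proof is correct. The paper itself offers no proof of this proposition (it is quoted from Keller's paper), and your argument — lower semicontinuity of $\mathrm{osc}(\cdot,m,x)$ under a.e.\ convergence to get completeness and $L^1$-closedness of the ball, plus approximation by conditional expectations onto the finite cylinder algebras $\mathcal{F}_m$ with the bound $\|w-E_m w\|_1\leq\theta^m|w|_\theta$ to get total boundedness — is exactly the standard route, so there is nothing substantive to compare.
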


We introduce the transfer operator $\mathcal{L}=\mathcal{L}_\phi:\mathcal{B}_{\theta}\rightarrow\mathcal{B}_{\theta}
$ 

\begin{equation}\nonumber(\mathcal{L} w)(x)=\sum_{\sigma(y)=x} e^{\phi(y)} w(y).\end{equation}  
	
We let $i=(i_0,i_1,\ldots,i_{k-1})$ denote an allowed string of length $k$ then we can write $(\mathcal{L}^k w)(x)=\sum_{|i|=k}e^{\phi^k(ix)}w(ix)$ where the sums is over those strings for which the concatenation $ix$ is allowed, i.e. we require $ix\in\Sigma$. 	

Another Banach space that we require is that of Lipschitz functions

\begin{equation}\nonumber \mathcal{F}_\theta=\{w:\Sigma\to\mathbb{C}\,:\,\sup_{m\geq 1}\theta^{-m}\|osc(w,m,\cdot)\|_\infty<\infty\}.\end{equation}
	
The following theorem describes the spectral properties of $\mathcal{L}$ acting on the space $\mathcal{F}_\theta$ of $d_\theta$-Lipschitz continuous functions, for a proof see \cite{ParPol90}[Theorem 2.2].

\begin{prop}[Ruelle] Let $\phi\in\mathcal{F}_\theta$ be real valued and suppose $A$ is irreducible and aperiodic.
			\begin{enumerate}
			\item There is a simple maximal positive eigenvalue $\lambda=\lambda_\phi$ of $\mathcal{L}$ with corresponding strictly positive eigenfunction $g=g_\phi\in\mathcal{F}_{\theta}$.
			\item The remainder of the spectrum of $\mathcal{L}:\mathcal{F}_{\theta}\rightarrow\mathcal{F}_{\theta}$ (excluding $\lambda>0$) is contained in a disk of radius strictly smaller that $\lambda$.
			\item There is a unique probability measure $\nu$ such that $\mathcal{L}^{*}\nu=\lambda\nu$.
			\item $\lambda^{-k} \mathcal{L}^{k} w \to g \int w d\nu$ uniformly for all $w\in\mathcal{F}_{\theta}$, where $g$ is as above and $\int g d\nu=1$.
			\end{enumerate}
			\label{ruelleopthm}
			\end{prop}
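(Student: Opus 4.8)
The statement is the classical Ruelle--Perron--Frobenius theorem, and the plan is to recover the leading eigendata by soft compactness and fixed-point arguments, and then to upgrade positivity to a spectral gap via a Hilbert-metric (Birkhoff cone) contraction. First I would record the basic \emph{Lasota--Yorke} inequality for $\mathcal{L}$ on $\mathcal{F}_\theta$: using $\phi\in\mathcal{F}_\theta$ and the identity $(\mathcal{L}^k w)(x)=\sum_{|i|=k}e^{\phi^k(ix)}w(ix)$, a direct estimate splitting $|e^{\phi^k(ix)}w(ix)-e^{\phi^k(ix')}w(ix')|$ into a ``variation of $w$'' term and a ``variation of $\phi^k$'' term shows that there are constants $c>0$ and $\kappa<1$ (one can take $\kappa=\theta$) with
\begin{equation}\nonumber
\sup_{m\ge 1}\theta^{-m}\|{\rm osc}(\mathcal{L}^k w,m,\cdot)\|_\infty\le \kappa^k\,\sup_{m\ge 1}\theta^{-m}\|{\rm osc}(w,m,\cdot)\|_\infty+c\,\|w\|_\infty .
\end{equation}
Combined with the observation that bounded balls of $\mathcal{F}_\theta$ are equicontinuous (cylinders shrink), hence relatively compact in $C(\Sigma)$ by Arzel\`a--Ascoli, this inequality drives everything below.

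Next I would produce the eigenmeasure and then the eigenfunction. The dual map $\nu\mapsto\mathcal{L}^{*}\nu/(\mathcal{L}^{*}\nu)(\mathbf 1)$ sends the weak-$*$ compact convex set of Borel probability measures on $\Sigma$ into itself, so Schauder--Tychonoff gives a fixed point $\nu$ with $\mathcal{L}^{*}\nu=\lambda\nu$, where $\lambda=\int\mathcal{L}\mathbf 1\,d\nu>0$; replacing $\mathcal{L}$ by $\lambda^{-1}\mathcal{L}$ I may assume $\lambda=1$. I then set $g_n=\frac1n\sum_{j=0}^{n-1}\mathcal{L}^{j}\mathbf 1$; each satisfies $\int g_n\,d\nu=1$, and the Lasota--Yorke inequality (together with $\int g_n\,d\nu=1$, which converts the variation bound into a uniform $\|\cdot\|_\infty$ bound) shows $(g_n)$ is bounded in $\mathcal{F}_\theta$. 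Extracting a $C(\Sigma)$-convergent subsequence yields $g\in\mathcal{F}_\theta$ with $g\ge 0$, $\int g\,d\nu=1$ and $\mathcal{L}g=g$. That $g>0$ everywhere is a standard consequence of $A$ being irreducible and aperiodic: $g$ is positive on some cylinder, and since $A^{d}>0$ one gets $\mathcal{L}^{d+r}g>0$ on all of $\Sigma$ for a suitable $r$.

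The heart of the matter is the spectral gap, simplicity, and uniqueness. I would fix $b>|\phi|_\theta/(1-\theta)$ and work with the cone
\begin{equation}\nonumber
\Lambda=\{w\in\mathcal{F}_\theta\,:\,w>0\text{ and }w(x)\le w(y)\,e^{b\,d_\theta(x,y)}\text{ for all }x,y\in\Sigma\}.
\end{equation}
One checks $\mathcal{L}(\Lambda)\subset\Lambda$ and that for some $N$ the image $\mathcal{L}^{N}(\Lambda)$ has finite diameter in the Hilbert projective metric $\Theta$ of $\Lambda$; Birkhoff's theorem then makes $\mathcal{L}^{N}$ a strict $\Theta$-contraction with ratio $\rho<1$. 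Since $\Theta$ dominates the sup-norm distance on the slice $\{w:\int w\,d\nu=1\}$, this forces $\mathcal{L}^{n}w\to g\int w\,d\nu$ exponentially fast for $w\in\Lambda$, hence (writing a general $w\in\mathcal{F}_\theta$ as a linear combination of cone elements) for every $w\in\mathcal{F}_\theta$. Exponential convergence of $\mathcal{L}^{n}$ to the rank-one operator $w\mapsto g\int w\,d\nu$ immediately yields (1) and (2) with spectral radius bound $\rho^{1/N}<1$ for the complement of $\lambda$, and also (4); for (3), if $\mathcal{L}^{*}\nu'=\lambda\nu'$ then $\int w\,d\nu'=\lim\lambda^{-n}\int\mathcal{L}^{n}w\,d\nu'=\big(\int g\,d\nu'\big)\int w\,d\nu$, and taking $w=\mathbf 1$ gives $\int g\,d\nu'=1$, whence $\nu'=\nu$.

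The main obstacle is this last step: designing a cone that is genuinely $\mathcal{L}$-invariant \emph{and} eventually of finite projective diameter, and then converting Hilbert-metric contraction into a bona fide spectral gap on the Banach space $\mathcal{F}_\theta$. An alternative would be to deduce quasi-compactness of $\mathcal{L}$ directly from the Lasota--Yorke inequality via the Ionescu-Tulcea--Marinescu theorem (the compact embedding $\mathcal{F}_\theta\hookrightarrow C(\Sigma)$ being exactly the Arzel\`a--Ascoli input above), but one then still has to identify the peripheral spectrum as the single simple eigenvalue $\lambda$, which again requires the positivity of $\mathcal{L}$ and the mixing hypothesis; so the essential difficulty is the same either way. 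This is precisely the argument carried out in \cite{ParPol90}.
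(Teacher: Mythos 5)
The paper offers no proof of this proposition at all: it is stated as a classical result (the Ruelle--Perron--Frobenius theorem) and attributed to \cite{ParPol90}[Theorem 2.2], so there is nothing internal to compare your argument against. Your sketch is a correct outline of the standard proof, and each of its three stages is sound: the Lasota--Yorke inequality on $\mathcal{F}_\theta$ together with Arzel\`a--Ascoli, the Schauder--Tychonoff fixed point for the normalised dual map producing $\nu$ and $\lambda$, and the Ces\`aro-average construction of $g$ with positivity upgraded via $A^d>0$. The one place where your account diverges from the source the paper actually cites is the spectral gap: Parry and Pollicott obtain (2) and (4) by direct sup/inf and equicontinuity estimates on the normalised iterates $\lambda^{-n}\mathcal{L}^n w$ followed by quasi-compactness (in the Ionescu-Tulcea--Marinescu spirit you mention as the alternative), whereas you run the Birkhoff cone / Hilbert projective metric contraction in the style of Ferrero--Schmitt and Liverani. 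The cone route buys explicit exponential rates and adapts better to perturbative settings; the direct route avoids having to verify finite projective diameter of $\mathcal{L}^N(\Lambda)$ and the comparison between the Hilbert metric and the $\mathcal{F}_\theta$ norm, which, as you rightly flag, is the real technical content of your approach. So your last sentence slightly misattributes the cone argument to \cite{ParPol90}, but the proof itself is correct.
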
	
We remark that the equilibrium state $\mu$ is absolutely continuous with respect to the eigenmeasure $\nu$, with the Radon-Nikodym being given by the eigenfunction $g$.  By scaling the operator $\mathcal{L}$, if necessary, we may assume without loss of generality that $\lambda=1$, further as $g>0$ we may assume that $\mathcal{L} 1=1$.  

Another useful property of $\mu$ is the Gibbs property (see \cite{Bow08} for further details). 	Namely, there exists a constant $c>1$ such that for any $x\in\Sigma$ and positive integer $n$ we have that 

\begin{equation}\label{gibbsoriginal} c^{-1}\leq \frac{\mu[x]_n}{e^{\phi^n(x)}}\leq c.\end{equation}
	
We now prove a result relating to the spectrum of $\mathcal{L}$ acting on $\mathcal{B}_{\theta}$, namely that it has a spectral gap.  A crucial part in this process is proving a Lasota-Yorke inequality. \footnote{The term `Lasota-Yorke' refers to the modern usage dating back to their paper \cite{LasYor73}.  Similar inequalities date back to Ionescu-Tulcea Marinescu \cite{IontulMar50} and perhaps earlier.}

\begin{lemma}There exists $c>0$ such that for any $w\in\mathcal{B}_{\theta}$ we have
\begin{equation}\nonumber |\mathcal{L}^k w|_\theta\leq c\left(\theta^k |w|_\theta+\|w\|_1\right).\end{equation}	\label{LY1}
	\end{lemma}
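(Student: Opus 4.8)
The plan is to prove the Lasota--Yorke inequality by estimating the oscillation seminorm $|\mathcal{L}^k w|_\theta$ directly from the defining formula $(\mathcal{L}^k w)(x) = \sum_{|i|=k} e^{\phi^k(ix)} w(ix)$, splitting the oscillation of a sum into two contributions: the variation of the weights $e^{\phi^k(ix)}$ and the variation of the sampled values $w(ix)$. First I would fix $x$ and two points $y,z \in [x]_m$ (for the essential supremum in the definition of $\mathrm{osc}(\mathcal{L}^k w, m, \cdot)$). For each allowed string $i$ of length $k$, the concatenated points $iy$ and $iz$ both lie in the cylinder $[ix]_{m+k}$, so $|w(iy) - w(iz)| \le \mathrm{osc}(w, m+k, iy)$, and the difference of the weights satisfies $|e^{\phi^k(iy)} - e^{\phi^k(iz)}| \le c\,\theta^{m+k}\, e^{\phi^k(ix)}$ by the H\"older/Lipschitz continuity of $\phi$ and the standard bounded-distortion estimate for Birkhoff sums $\phi^k$ along cylinders (here using $\mathcal{L}1 = 1$, i.e. the normalization $\sum_{|i|=k} e^{\phi^k(ix)} \le c$, in fact $=1$). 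Combining these via the triangle inequality,
\begin{equation}\nonumber
\mathrm{osc}(\mathcal{L}^k w, m, x) \le \sum_{|i|=k} e^{\phi^k(iy_0)}\,\mathrm{osc}(w, m+k, iy) + c\,\theta^{m+k}\sum_{|i|=k} e^{\phi^k(ix)}\,\|w\|_{\infty,\mathrm{loc}},
\end{equation}
where the second term needs care since we only control $\|w\|_1$, not a sup norm.

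The remedy for the second term is to avoid the sup norm entirely: write $w(iy)$ as an average of $w$ over the cylinder $[iy]_{m+k}$ plus an error controlled by $\mathrm{osc}(w, m+k, iy)$, and observe that the average is $\mu[iy]_{m+k}^{-1}\int_{[iy]_{m+k}} w\, d\mu$. Then I would integrate the whole inequality for $\mathrm{osc}(\mathcal{L}^k w, m, \cdot)$ against $\mu$. The key mechanism is that integrating $\sum_{|i|=k} e^{\phi^k(ix)}\, F(ix)$ against $d\mu(x)$ essentially reproduces $\int F\, d\mu$ up to the constant $c$ from the Gibbs property \eqref{gibbsoriginal}: since $\mathcal{L}^*$ preserves (a measure comparable to) $\mu$, one gets $\int \sum_{|i|=k} e^{\phi^k(ix)} F(ix)\, d\mu(x) \le c \int F\, d\mu$. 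Applying this with $F$ built from $\mathrm{osc}(w, m+k, \cdot)$ yields
\begin{equation}\nonumber
\|\mathrm{osc}(\mathcal{L}^k w, m, \cdot)\|_1 \le c\,\|\mathrm{osc}(w, m+k, \cdot)\|_1 + c\,\theta^{m+k}\|w\|_1,
\end{equation}
and multiplying by $\theta^{-m}$ and taking the supremum over $m \ge 1$ gives $\theta^{-m}\|\mathrm{osc}(w,m+k,\cdot)\|_1 = \theta^k \cdot \theta^{-(m+k)}\|\mathrm{osc}(w,m+k,\cdot)\|_1 \le \theta^k |w|_\theta$, hence $|\mathcal{L}^k w|_\theta \le c\,\theta^k |w|_\theta + c\,\|w\|_1$, which is the claim.

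I expect the main obstacle to be the bookkeeping around the $L^1$ (rather than $L^\infty$) nature of the seminorm: one cannot naively bound $|w(iy)|$ pointwise, so every appearance of a sampled value of $w$ must be converted into a cylinder average via the Gibbs property, and one must check that the change of variables "summing over preimages then integrating $d\mu$" really does contract to $\int \cdot\, d\mu$ with a uniform constant independent of $k$ and $m$. A secondary technical point is handling the essential supremum in the definition of $\mathrm{osc}$ carefully when passing from pointwise estimates at $y,z$ to the $\mu$-a.e. statement; this is routine but must be done so that the constants do not accumulate with $k$. The bounded-distortion estimate $|\phi^k(iy) - \phi^k(ix)| \le c\,\theta^{m+k}$ for $y \in [x]_m$ is standard (geometric series of the H\"older moduli $\sum_{j\ge 1}\theta^{m+j}$) and causes no difficulty.
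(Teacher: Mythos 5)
Your proposal is correct and follows essentially the same route as the paper: split the oscillation of $\mathcal{L}^k w$ into the variation of the weights $e^{\phi^k(i\,\cdot)}$ and of the values $w(i\,\cdot)$, convert pointwise values of $w$ into cylinder averages plus oscillations via the Gibbs property, integrate against $\mu$, and divide by $\theta^m$. The only slip is the claimed distortion bound $|e^{\phi^k(iy)}-e^{\phi^k(iz)}|\le c\,\theta^{m+k}e^{\phi^k(ix)}$: the geometric series $\sum_{j=1}^{k}\theta^{m+j}$ only yields $c\,\theta^{m}$, which is nevertheless exactly what is needed after multiplying by $\theta^{-m}$, so the conclusion is unaffected.
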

	
\begin{proof}Let $x,y\in\Sigma$ be such that $d_\theta(x,y)\leq \theta^m$ then 

\begin{eqnarray}\nonumber |\mathcal{L}^k w(x)-\mathcal{L}^k w(y)| & \leq & \sum_{|i|=k} |e^{\phi^k(ix)}w(ix)-e^{\phi^k(iy)}w(iy)|\\
	\nonumber & \leq & 	\sum_{|i|=k} e^{\phi^k(ix)} {\rm osc}(w,k+m,ix)+e^{\phi^k(ix)}|1-e^{\phi^k(iy)-\phi^k(ix)}||w(iy)|\\
	\nonumber & \leq & 	c\sum_{|i|=k} \left(e^{\phi^k(ix)} {\rm osc}(w,k+m,ix)+\theta^m \frac{e^{\phi^k(ix)}}{\mu[ix]_{k+m}}\int_{[ix]_{k+m}}|w|d\mu\right)\\
\nonumber &\leq & 	c\sum_{|i|=k} \left(e^{\phi^k(ix)} {\rm osc}(w,k+m,ix)+\theta^m \frac{c}{\mu[x]_{m}}\int_{[ix]_{k+m}}|w|d\mu\right).\end{eqnarray}

Where we used the Gibbs property (\ref{gibbsoriginal}) in the final line, i.e.

\begin{equation}\nonumber \frac{e^{\phi^k(ix)}}{\mu[ix]_{k+m}} \leq \frac{c}{e^{\phi^m(x)}} \leq \frac{c^2}{\mu[x]_m}.\end{equation}
	
Thus

\begin{equation}\nonumber {\rm osc}(\mathcal{L}^k w,m,x) \leq c\sum_{|i|=k} \left(e^{\phi^k(ix)} {\rm osc}(w,k+m,ix)+\theta^m \frac{c}{\mu[x]_{m}}\int_{[ix]_{k+m}}|w|d\mu\right).
 \end{equation}

Integrating with respect to $\mu$ we see that and again invoking (\ref{gibbsoriginal}) we see

\begin{equation}\nonumber \int {\rm osc}(\mathcal{L}^k w,m,x) d\mu(x) \leq c\left( \int{\rm osc}(w,k+m,x)d\mu(x)+\theta^m\|w\|_1\right),\end{equation} 
	
dividing by $\theta^m$ and taking suprema yields

\begin{equation} \nonumber |\mathcal{L} w|_\theta\leq c(\theta^k |w|_\theta+\|w\|_1).\end{equation}

Finally we see that

\begin{eqnarray}\nonumber \|\mathcal{L}^k w\|_\theta&=&|\mathcal{L}^k w|_\theta+\|\mathcal{L}^k w\|_1 \\
	\nonumber &\leq & c\theta^k(|w|_\theta+\|w\|_1) + \|w\|_1\\
	\nonumber &\leq & c(\theta^k|w|_\theta + \|w\|_1).\end{eqnarray}

	\end{proof}

\begin{lemma}The operator $\mathcal{L}:\mathcal{B}_{\theta}\to\mathcal{B}_{\theta}$ has a simple maximal eigenvalue $\lambda=1$, while the rest of the spectrum is contained in a ball of radius strictly less than $1$.\label{specgap}\end{lemma}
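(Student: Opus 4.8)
The plan is to prove that $\mathcal{L}$ is quasi-compact on $\mathcal{B}_{\theta}$ with spectral radius $1$, and then to identify its peripheral spectrum by transporting Ruelle's Proposition~\ref{ruelleopthm} from the Lipschitz space $\mathcal{F}_{\theta}$ to $\mathcal{B}_{\theta}$. Before anything else I would record two elementary facts about the normalised operator: since $\mathcal{L}1=1$ one has $\mathcal{L}^{*}\mu=\mu$, i.e. $\int\mathcal{L}w\,d\mu=\int w\,d\mu$, and hence, using $|\mathcal{L}w|\le\mathcal{L}|w|$, the contraction $\|\mathcal{L}^{n}w\|_{1}\le\|w\|_{1}$ for all $n$. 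Feeding this into Lemma~\ref{LY1} gives the iterated Lasota--Yorke inequality $\|\mathcal{L}^{n}w\|_{\theta}\le c\,\theta^{n}\|w\|_{\theta}+c\,\|w\|_{1}$ for every $n\ge 1$; in particular $\mathcal{L}$ is bounded and power-bounded on $\mathcal{B}_{\theta}$, so its spectral radius is at most $1$, and it equals $1$ because $\mathcal{L}1=1$.

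For quasi-compactness I would apply the standard abstract theorem of Ionescu-Tulcea--Marinescu type (in the form due to Hennion) to the pair of norms $(\|\cdot\|_{\theta},\|\cdot\|_{1})$: the iterated inequality just displayed is the required contraction-up-to-lower-order-term estimate with rate $\theta<1$, and Proposition~\ref{Kellercompact} furnishes exactly the remaining hypothesis, namely that $\|\cdot\|_{\theta}$-bounded sets are relatively compact in $\|\cdot\|_{1}$. The conclusion is that the essential spectral radius of $\mathcal{L}$ on $\mathcal{B}_{\theta}$ is at most $\theta$, so the part of $\mathrm{spec}(\mathcal{L})$ lying outside the disc $\{|z|\le\theta\}$ consists of finitely many eigenvalues of finite algebraic multiplicity.

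To pin down that peripheral part I would introduce the rank-one operator $Pw=\bigl(\int w\,d\mu\bigr)1$ and argue as follows. For $w\in\mathcal{F}_{\theta}$, parts~(2) and~(4) of Proposition~\ref{ruelleopthm} give $\mathcal{L}^{n}w\to Pw$ in $\mathcal{F}_{\theta}$, hence in $\mathcal{B}_{\theta}$ since $\mathcal{F}_{\theta}\hookrightarrow\mathcal{B}_{\theta}$ continuously. For an arbitrary $w\in\mathcal{B}_{\theta}$ and any $\tilde{w}\in\mathcal{F}_{\theta}$ the iterated Lasota--Yorke bound applied to $w-\tilde{w}$ yields $\limsup_{n\to\infty}\|\mathcal{L}^{n}w-Pw\|_{\theta}\le c\,\|w-\tilde{w}\|_{1}$, and since Lipschitz functions are $\|\cdot\|_{1}$-dense in $L^{1}(\mu)$ the right-hand side can be made arbitrarily small; thus $\mathcal{L}^{n}w\to Pw$ in $\mathcal{B}_{\theta}$ for every $w$. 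Because $P^{2}=P$ and $\mathcal{L}P=P\mathcal{L}=P$ (consequences of $\mathcal{L}1=1$ and $\mathcal{L}^{*}\mu=\mu$), the operator $Q:=\mathcal{L}-P$ satisfies $Q^{n}=\mathcal{L}^{n}-P\to 0$ strongly, so $Q$ has no eigenvalue of modulus $\ge 1$; being a finite-rank perturbation of $\mathcal{L}$ it is quasi-compact with essential spectral radius $\le\theta$, so its spectrum outside $\{|z|\le\theta\}$ is a finite set of eigenvalues, each of modulus strictly less than $1$, and therefore the spectral radius of $Q$ is strictly less than $1$. Since $\mathrm{spec}(\mathcal{L})=\{1\}\cup\mathrm{spec}(Q)$ with $P$ the spectral projection associated to the eigenvalue $1$, this shows $1$ is a simple eigenvalue and the remainder of the spectrum lies in a disc of radius strictly less than $1$.

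The bookkeeping with $P$ and $Q$ and the two reductions at the start are routine. The step that deserves genuine care is the quasi-compactness: one must check that Lemma~\ref{LY1}, the $L^{1}$-contraction, and Proposition~\ref{Kellercompact} match the hypotheses of the Ionescu-Tulcea--Marinescu/Hennion theorem exactly, with weak norm $\|\cdot\|_{1}$, strong norm $\|\cdot\|_{\theta}$, and contraction rate $\theta$. I also expect the small observation that the $\theta^{n}\|w\|_{\theta}$ term in the iterated estimate tends to $0$ as $n\to\infty$ — which is precisely what makes an $L^{1}$-approximation by Lipschitz functions enough in the third paragraph — to be the conceptual heart of the peripheral-spectrum argument.
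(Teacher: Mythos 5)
Your argument is correct, but it is organised differently from the paper's. The paper never invokes quasi-compactness of the unperturbed $\mathcal{L}$: it first shows $\|\mathcal{L}^n w-\int w\,d\mu\|_1\to 0$ for each $w\in\mathcal{B}_\theta$ by the same Lipschitz approximation you use, then upgrades this to \emph{uniform} convergence over the unit ball $B$ via a finite-subcover argument based on Keller's compactness (Proposition \ref{Kellercompact}), and finally feeds that uniform $L^1$-smallness into the Lasota--Yorke inequality to get the explicit bound $\|\mathcal{L}^{2n}|_{\mathbb{C}^\bot}\|_\theta\le c(\theta^{2n}+\theta^n+\delta)<1$ for suitable $n,\delta$, which immediately forces the spectral radius of $\mathcal{L}$ on $\{w:\int w\,d\mu=0\}$ below $1$. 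You instead establish quasi-compactness first (Hennion's criterion with weak norm $\|\cdot\|_1$), prove only \emph{strong} convergence $\mathcal{L}^n w\to Pw$ in $\|\cdot\|_\theta$, and then use the finiteness of the peripheral spectrum to rule out eigenvalues of $Q=\mathcal{L}-P$ of modulus $\ge 1$. Both routes use the same three ingredients (Lemma \ref{LY1}, Proposition \ref{Kellercompact}, Proposition \ref{ruelleopthm} plus $L^1$-density of Lipschitz functions); the compactness just enters at different points. Your version is the standard Ionescu-Tulcea--Marinescu/Hennion template and has the virtue of matching exactly what the paper does later for the perturbed operators $\mathcal{L}_n$ (Lemma \ref{essspec}), and of needing only pointwise rather than uniform convergence; the paper's version is more quantitative, yielding an explicit power of $\mathcal{L}$ whose restricted norm is below $1$ without appealing to the abstract essential-spectral-radius theorem.
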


\begin{proof}We begin by proving that for any $w\in\mathcal{B}_\theta$ that $\mathcal{L}^k w$ converges $\int w d\mu$ in $L^1(\mu)$.  Fix $\epsilon>0$ and choose $v\in\mathcal{F}_\theta$ such that $\|v-w\|_1<\epsilon/3$, by Proposition \ref{ruelleopthm} there exists a positive integer $N$ such that $\|\mathcal{L}^n(v)-\int v d\mu\|_1<\epsilon/3$ for all $n\geq N$, in which case we see that

\begin{eqnarray}\nonumber \left\|\mathcal{L}^n(w)-\int w d\mu\right\|_1 &\leq &\| \mathcal{L}^n(w-v)\|_1+\left\|\mathcal{L}^n(v)-\int v d\mu\right\|_1+ \left\|\int v d\mu - \int w d\mu \right\|_1\\
\nonumber & \leq& 2\|v-w\|_1+\left\|\mathcal{L}^n(v)-\int v d\mu\right\|_1 <\epsilon.\end{eqnarray}

This in turn implies that for each $w\in B=\{v\in\mathcal{B}_\theta\,:\,\|v\|_\theta\leq 1\}$ that

\begin{equation}\nonumber \|\mathcal{L}^n(w)|_{\mathbb{C}^{\bot}}\|_1 = \inf_{c\in\mathbb{C}} \|\mathcal{L}^n(w)-c\|_1\to 0\,\,\text{as }n\to\infty \end{equation}
where $\mathbb{C}^{\bot}=\{w\in\mathcal{B}_\theta\,:\,\int w d\mu=0\}$.  We claim that this convergence is uniform over $B$.  To see this fix $\delta>0$ and $w\in B$ then there exists a positive integer $N=N(w)$ such that $\|\mathcal{L}^n(w)|_{\mathbb{C}^{\bot}}\|_1\leq \delta/2$ for all $n\geq N$.  By Proposition \ref{Kellercompact} $B$ is compact and so the cover $\{B_{1}(w,\delta/2)\}_{w\in B}$ has a finite subcover, say $B_1(w_1,\delta/2),B_1(w_2,\delta/2),\ldots, B_1(w_m,\delta/2)$.  In which case if $n\geq N:=\max_{i=1,2,\ldots,m} N(w_i)$ we have $\|\mathcal{L}^n_\phi (w)|_{\mathbb{C}^{\bot}}\|_1\leq\delta$ for any $w\in B$.

Finally to show the existence of a spectral gap from Proposition \ref{LY1} we observe for $w\in B$, and $n\geq N$ that

\begin{eqnarray}\nonumber \|\mathcal{L}^{2n}(w)|_{\mathbb{C}^\bot}\|_\theta &\leq & c(\theta^n|\mathcal{L}^n(w)|_{\mathbb{C}^\bot}|_\theta+\|\mathcal{L}^n(w)|_{\mathbb{C}^\bot}\|_1) \\
	\nonumber &\leq & c(\theta^{2n}|w|_{\mathbb{C}^\bot}|_\theta+\theta^n\|w|_{\mathbb{C}^\bot}\|_{1}+\|\mathcal{L}^n(w)|_{\mathbb{C}^\bot}\|_1) \\
\nonumber &\leq & c(\theta^{2n}+\theta^n+\delta).\end{eqnarray}

We may choose $n$ and $\delta$ so that $\|\mathcal{L}^{2n}(w)|_{\mathbb{C}^\bot}\|_\theta<1$ which proves that $\mathcal{L}$ has a spectral gap.\end{proof}

\subsection{Singular perturbations of the transfer operator}

We introduce a perturbation of the transfer operator $\mathcal{L}$: let $\{U_n\}_n$ be a  family of open sets, further, we require that they satisfy the following technical conditions:
\begin{enumerate}\item $\{U_n\}_n$ are nested with $\cap_{n\geq 1} U_n = \{z\}$. \label{ass1}	
	\item Each $U_n$ consists of a finite union of cylinder sets, with each cylinder having length $n$.
	\item There exists constants $c>0$, $0<\rho<1$ such that $\mu(U_n)\leq c\rho^n$ for $n\geq 1$.
	\item There a sequence $\{l_n\}_n\subset \mathbb{N}$, and constant $\kappa>0$ such that $\kappa<l_n/n\leq 1$ and  $U_n\subset [z]_{l_n}$ for all $n\geq 1$.
	
	\item If $\sigma^p(z)=z$ has prime period $p$ then $\sigma^{-p}(U_n)\cap [z_0 z_1\cdots z_{p-1}]\subseteq U_n$ for large enough $n$. \label{ass4}
\end{enumerate}

\begin{remark}We observe that that (\ref{ass4}) above is not absolutely essential for the application to conformal repellers and serves only to greatly simplify the analysis. \end{remark}

For $n\geq 1$ we define the perturbed operator $\mathcal{L}_n:\mathcal{B}_\theta\to\mathcal{B}_\theta$ by

\begin{equation}\nonumber \mathcal{L}_n(w)(x)=\mathcal{L}(\chi_{U_n^c}w)(x).\end{equation}

For a positive integer $n$ we let $\Sigma_n=\bigcap_{k\geq 0} \Sigma\setminus \sigma^{-j}(U_n)$.  By choosing $n$ large enough we can ensure that the system $(\Sigma_n,\sigma|_{\Sigma_n})$ is topologically mixing, and so the results of \cite{ColMarSch97} apply, namely we have

\begin{prop}[Collet, Mart\'{\i}nez, Schmitt]For each $n$ there exists continuous $g_n:\Sigma\to\mathbb{R}$ with $g_n>0$, and $\lambda_n>0$ such that $\mathcal{L}_n g_n =\lambda_n g_n$, moreover for any continuous $w:\Sigma\to\mathbb{C}$ we have
\begin{equation}\nonumber \|\lambda_n^{-k}\mathcal{L}_n^k w - \nu_n(w|_{\Sigma_n}) g_n\|_\infty\to 0,\end{equation}
where $\nu_n$ denotes the unique probability measure guaranteed by Proposition \ref{ruelleopthm}, i.e. $\nu_n$ satisfies ${\rm supp}(\nu_n)=\Sigma_n$ and $(\mathcal{L}_n^{*}\nu_n)(w)=\lambda_n \nu_n(w)$ for $w\in\mathcal{F}_\theta(\Sigma_n)$.\label{collopthm}
\end{prop}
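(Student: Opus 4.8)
The strategy is to realise $\mathcal{L}_n$ as the natural extension, from $\Sigma$ to the survivor set $\Sigma_n$, of a genuine Ruelle transfer operator on a topologically mixing subshift of finite type, and then to transport the conclusions of Proposition~\ref{ruelleopthm}. Since $U_n$ is a finite union of cylinders of length $n$, the set $\Sigma_n$ is itself a subshift of finite type: recoding on the alphabet of admissible words of length $n-1$ presents $\Sigma_n$ as the subshift of a $0$--$1$ matrix $A_n$, which is irreducible and aperiodic once $n$ is large enough for $(\Sigma_n,\sigma)$ to be topologically mixing. The potential $\phi$ remains Lipschitz on the recoded system, so Proposition~\ref{ruelleopthm} applies to the Ruelle operator $\widetilde{\mathcal{L}}_n v(x)=\sum_{\sigma y=x,\,y\in\Sigma_n}e^{\phi(y)}v(y)$ on $\mathcal{F}_\theta(\Sigma_n)$ and produces a simple maximal eigenvalue $\lambda_n>0$, a strictly positive Lipschitz eigenfunction $h_n$ on $\Sigma_n$, a unique probability eigenmeasure $\nu_n$ with $\widetilde{\mathcal{L}}_n^{*}\nu_n=\lambda_n\nu_n$, and the uniform convergence $\lambda_n^{-k}\widetilde{\mathcal{L}}_n^{k}v\to h_n\,\nu_n(v)$ on $\Sigma_n$; taking $v\equiv\mathbf{1}$ bounds $\|\lambda_n^{-k}\widetilde{\mathcal{L}}_n^{k}\|_\infty$ uniformly in $k$, so this convergence extends to every $v\in C(\Sigma_n)$ by density. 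The two operators are tied by the intertwining relation $R_n\mathcal{L}_n=\widetilde{\mathcal{L}}_nR_n$, where $R_n\colon C(\Sigma)\to C(\Sigma_n)$ is restriction: if $\sigma y=x\in\Sigma_n$ then $\sigma^{j}y\notin U_n$ for all $j\ge1$ automatically, so $y\notin U_n\iff y\in\Sigma_n$ and the two defining sums agree.

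Next I would promote this to all of $\Sigma$ by setting $g_n:=\lim_{k\to\infty}\lambda_n^{-k}\mathcal{L}_n^{k}\mathbf{1}$, the limit taken in $\|\cdot\|_\infty$ on $\Sigma$. The main obstacle is precisely the existence of this uniform limit: on $\Sigma_n$ it is contained in the first paragraph (via the intertwining, with value $h_n\,\nu_n(\mathbf{1})=h_n$), but off $\Sigma_n$ one must show that $\lambda_n^{-k}\mathcal{L}_n^{k}\mathbf{1}$ is Cauchy in $C(\Sigma)$. The mechanism is that a length-$k$ surviving string $ix$ is forced to spell an $A_n$-admissible word except possibly in a collar of bounded size (at most $n$) near its right endpoint, so $\mathcal{L}_n^{k}\mathbf{1}(x)$ depends on $x$ only through that collar; a distortion estimate built on the Gibbs property~(\ref{gibbsoriginal}) then controls $|\mathcal{L}_n^{k}\mathbf{1}(x)-\mathcal{L}_n^{k}\mathbf{1}(x')|$, uniformly in $k$, by the number of leading symbols shared by $x$ and $x'$, and feeding this into the Cauchy property of $\lambda_n^{-k}\widetilde{\mathcal{L}}_n^{k}\mathbf{1}$ on $\Sigma_n$ together with the mixing of $\Sigma_n$ closes the argument. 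I would isolate this collar/distortion estimate as a lemma inside the proof; it is the point at which the standing hypotheses on $\{U_n\}_n$ and the topological mixing of $\Sigma_n$ are genuinely used. Granting the limit, $\mathcal{L}_ng_n=\lambda_ng_n$ is immediate from the continuity of $\mathcal{L}_n$ on $C(\Sigma)$, one has $g_n|_{\Sigma_n}=h_n>0$, and $g_n>0$ throughout $\Sigma$ because the same estimate keeps $\lambda_n^{-k}\mathcal{L}_n^{k}\mathbf{1}$ bounded away from $0$ uniformly in $k$ (surviving preimage strings to any $x$ exist for $k$ large, by irreducibility).

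Finally, for an arbitrary $w\in C(\Sigma)$ I would write $\mathcal{L}_n^{k}w=\mathcal{L}_n^{m}\!\big(\mathcal{L}_n^{\,k-m}w\big)$ for a fixed large $m$; for $k-m$ large the points at which the inner term is evaluated again follow $A_n$-admissible words away from a bounded collar, so by uniform continuity of $w$ one may there replace $w$ by its values at nearby points of $\Sigma_n$ up to an error small uniformly in $x$, reducing the claim to the convergence on $\Sigma_n$ from the first paragraph together with the $w\equiv\mathbf{1}$ case of the second. This gives $\|\lambda_n^{-k}\mathcal{L}_n^{k}w-\nu_n(w|_{\Sigma_n})\,g_n\|_\infty\to0$; the uniqueness of $\nu_n$ and the identity ${\rm supp}(\nu_n)=\Sigma_n$ are inherited from Proposition~\ref{ruelleopthm} applied to the recoded subshift, together with the full support of the Gibbs measure of a mixing subshift. (Alternatively one could run a Birkhoff--Hilbert cone contraction for $\mathcal{L}_n$ directly on a cone of positive functions in $\mathcal{F}_\theta(\Sigma)$, the topological mixing of $\Sigma_n$ ensuring that a high iterate of $\mathcal{L}_n$ sends the cone into a set of finite Hilbert diameter; this avoids the recoding, but the same collar estimate reappears when one checks the finite-diameter property.)
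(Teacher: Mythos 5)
First, a point of comparison: the paper does not prove this proposition at all --- it is quoted from Collet--Mart\'{\i}nez--Schmitt \cite{ColMarSch97}, the only in-text justification being that $(\Sigma_n,\sigma|_{\Sigma_n})$ is a topologically mixing subshift of finite type for $n$ large. Your first paragraph is correct and is precisely the reduction that makes that citation legitimate: $\Sigma_n$ is an SFT after recoding, the intertwining $R_n\mathcal{L}_n=\widetilde{\mathcal{L}}_nR_n$ is a correct one-line computation (for $x\in\Sigma_n$ and $\sigma y=x$ one has $y\notin U_n\iff y\in\Sigma_n$), and Proposition \ref{ruelleopthm} plus positivity and density gives everything on $\Sigma_n$, including the extension from $\mathcal{F}_\theta(\Sigma_n)$ to $C(\Sigma_n)$.

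The genuine gap is in your second paragraph, at the step you yourself call ``the main obstacle'' and then do not close. The collar/distortion estimate you describe controls the dependence of $\mathcal{L}_n^{k}\mathbf{1}(x)$ on $x$ (the set of surviving $k$-strings depends only on $x_0\cdots x_{n-2}$, and the weights $e^{\phi^k(ix)}$ have bounded distortion), i.e.\ it gives equicontinuity of the family $\{\lambda_n^{-k}\mathcal{L}_n^{k}\mathbf{1}\}_{k}$ and hence, with a uniform bound, precompactness in $C(\Sigma)$ by Arzel\`a--Ascoli. It does not give convergence in $k$ at a fixed $x\notin\Sigma_n$: nothing you have written forces distinct subsequential limits to agree there, because the convergence imported from Proposition \ref{ruelleopthm} lives only on $\Sigma_n$, and a point $x\notin\Sigma_n$ need not admit any point of $\Sigma_n$ sharing a long prefix with it (its prefix may already contain a forbidden word), so the spatial estimate cannot transport the Cauchy property from $\Sigma_n$ outward. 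What is actually required is a renewal-type comparison of the sum over surviving $k$-strings compatible with $x$'s collar against the corresponding partition functions on $\Sigma_n$ (for instance by splitting each surviving string at its last entry into a fixed cylinder of $\Sigma_n$ and invoking the spectral gap there), or else the Birkhoff-cone argument you mention only parenthetically, run directly on a cone in $\mathcal{F}_\theta(\Sigma)$; either route must also deal with ``dead-end'' admissible words that do not extend to points of $\Sigma_n$. The same vagueness affects your final paragraph for general $w$. A smaller but real issue: positivity of $g_n$ on all of $\Sigma$ does not follow from ``irreducibility'' alone --- if every $\sigma$-preimage of some $x$ lay in $U_n$ then $\mathcal{L}_n^{k}\mathbf{1}(x)=0$ for every $k\geq 1$ --- and excluding this uses the standing assumptions that $U_n\subseteq[z]_{l_n}$ with $l_n\to\infty$ and that $n$ is large, not merely mixing of $\Sigma_n$.
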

 
Moreover, we may prove a Lasota-Yorke style inequality for $\mathcal{L}_n:\mathcal{B}_\theta\to\mathcal{B}_\theta$, which in conjunction with Proposition \ref{collopthm} and the methods of Lemma \ref{specgap} we can show that $g_n\in\mathcal{B}_\theta$ and that $\lambda_n$ is a simple maximal eigenvalue for $\mathcal{L}_n:\mathcal{B}_\theta\to\mathcal{B}_\theta$.

The perturbation $\mathcal{L}_n$ is singular with respect to the $\|\cdot\|_\theta$ norm, we adopt the approach of \cite{KelLiv99} and introduce a weak norm.

\begin{equation}\nonumber \|w\|_h:=|w|_h+\|w\|_1=\sup_{j\geq 0} \sup_{m\geq 1} \theta^{-m} \int_{\sigma^{-j}(U_m)} |w| d\mu+\|w\|_1.\end{equation}

Throughout this section we assume that $\theta\in (\rho, 1)$.  Our first result states that the weak norm is dominated by strong norm.

\begin{lemma}Under the assumptions above we have
\begin{equation}\nonumber \|w\|_{h} \leq c\|w\|_\theta\end{equation}
for all $w\in\mathcal{B}_\theta$.\label{norms}\end{lemma}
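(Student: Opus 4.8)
Since the strong norm $\|\cdot\|_\theta$ and the weak norm $\|\cdot\|_h$ share the summand $\|w\|_1$, it is enough to prove the semi-norm estimate $|w|_h \le c\|w\|_\theta$; the full inequality then follows from $\|w\|_h = |w|_h + \|w\|_1 \le c\|w\|_\theta + \|w\|_1 \le (c+1)\|w\|_\theta$. So the plan is to fix $j \ge 0$ and $m \ge 1$ and bound $\theta^{-m}\int_{\sigma^{-j}(U_m)}|w|\,d\mu$ by a constant multiple of $\|w\|_\theta$, with the constant independent of $j$ and $m$; taking the supremum over $j,m$ then gives the claim.

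Write $E = \sigma^{-j}(U_m)$. The first point is that $\sigma$-invariance of $\mu$ together with assumption (3) gives $\mu(E) = \mu(U_m) \le c\rho^m$, so $E$ has exponentially small measure. The main step is to upgrade this measure bound to an $L^1$ bound on $\int_E |w|\,d\mu$ by comparing $w$ with its conditional average on the partition into $1$-cylinders. Precisely, for each symbol $a\in\{0,\ldots,l-1\}$, the definition of ${\rm osc}(w,1,\cdot)$ together with Fubini's theorem yields, for $\mu$-a.e. $x\in[a]$, the pointwise bound $|w(x)| \le \mu[a]^{-1}\int_{[a]}|w|\,d\mu + {\rm osc}(w,1,[a])$. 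Integrating this over $E\cap[a]$, using $\mu(E\cap[a]) \le \mu(E) \le c\rho^m$ and $\mu[a] \ge \delta := \min_b\mu[b] > 0$ (finitely many symbols, $\mu$ of full support), and summing over $a$, we obtain $\int_E |w|\,d\mu \le c\rho^m\bigl(\|w\|_1 + \|{\rm osc}(w,1,\cdot)\|_1\bigr)$. Since $\|{\rm osc}(w,1,\cdot)\|_1 \le \theta\,|w|_\theta \le |w|_\theta$ by the very definition of $|\cdot|_\theta$, this is $\le c\rho^m\|w\|_\theta$. Dividing by $\theta^m$ and invoking the standing hypothesis $\rho < \theta$ gives $\theta^{-m}\int_{\sigma^{-j}(U_m)}|w|\,d\mu \le c\,(\rho/\theta)^m\|w\|_\theta \le c\|w\|_\theta$, which is what we wanted.

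The one genuinely delicate choice is the scale at which one takes conditional expectations: it is tempting to compare $w$ with its average over the cylinder $[z]_{l_m}\supset U_m$ (assumption (4)) or over the length-$m$ cylinders composing $U_m$ (assumption (2)), but either choice introduces a factor $\mu[z]_{l_m}^{-1}$ (or similar), which by the Gibbs property can be exponentially large and swamps the gain $\mu(U_m)\le c\rho^m$. Working at scale $1$ avoids this, since $1$-cylinders have measure bounded below by the fixed constant $\delta$, so all the decay is supplied by assumption (3) and the gap $\rho<\theta$; no other property of $\{U_n\}$ enters here. Everything else — the Fubini argument for the pointwise inequality, interchanging sum and integral — is routine.
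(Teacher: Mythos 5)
Your proof is correct and follows essentially the same route as the paper's: both rest on comparing $w$ with its average over $1$-cylinders plus the scale-$1$ oscillation (the paper phrases this as the intermediate bound $\|w\|_\infty\le c\|w\|_\theta$), and then combine shift-invariance of $\mu$ with $\mu(U_m)\le c\rho^m$ and the standing hypothesis $\rho<\theta$ to absorb the factor $\theta^{-m}$. The only cosmetic difference is that the paper extracts the $L^\infty$ bound as a separate displayed inequality (which it reuses later), whereas you integrate the pointwise estimate directly over $\sigma^{-j}(U_m)$.
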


\begin{proof}We first relate the strong norm with the $L^{\infty}$ norm.  Let $c=\max_{i=0,1,\ldots,l-1} \mu[i]_{1}^{-1}$, then for $\mu$ almost all $x\in \Sigma$ 

\begin{eqnarray}\nonumber |w(x)| &\leq& {\rm osc}(w,1,x)+ c\int_{[x_0]_{1}} |w|d\mu \\
\nonumber &\leq &  c\left(\int_{[x_0]_{1}} {\rm osc}(w,1,y)d\mu(y)+\int_{[x_0]_{1}} |w|d\mu\right)\\ \label{supnorm} &\leq& c\| w\|_\theta.\end{eqnarray}
If $\theta \in( \rho, 1 )$ then
\begin{equation}
\nonumber |w|_h \leq  \sup_{m\geq 1} \theta^{-m}\mu(U_m) \|w\|_{\infty}\leq c\|w\|_\theta.\end{equation}
\end{proof}

\subsection{Convergence of the spectral radii}

In this section we prove a preliminary result relating to the behaviour of the spectra of the operators $\mathcal{L}_n$ acting on $\mathcal{B}_\theta$.   From Proposition \ref{collopthm} it is easy to see that for any $u\in\Sigma$ we have

\begin{equation}\label{sumpressure} P_{\Sigma_n}(\phi):=\log \lambda_n = \lim_{k\to\infty} \frac{1}{k} \log \left( \mathcal{L}_n^k 1 (u)\right).\end{equation}

\begin{prop}Under assumptions (1)-(5) we have $\lim_{n\to\infty}\lambda_n=\lambda.$\label{prelconv}\end{prop}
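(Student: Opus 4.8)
The plan is to establish the two inequalities $\limsup_{n\to\infty}\lambda_n\le\lambda$ and $\liminf_{n\to\infty}\lambda_n\ge\lambda$ separately. The first is immediate: since $\mathcal{L}_n w=\mathcal{L}(\chi_{U_n^c}w)$ and $\chi_{U_n^c}\le 1$, positivity of $\mathcal{L}$ gives $\mathcal{L}_n^k 1\le\mathcal{L}^k 1=1$ pointwise for every $k$, so by the formula \eqref{sumpressure} and the normalization $\mathcal{L} 1=1$ we get $\log\lambda_n=\lim_k\frac1k\log(\mathcal{L}_n^k 1(u))\le 0=\log\lambda$. Thus $\lambda_n\le\lambda=1$ for all $n$, and in particular $\limsup_n\lambda_n\le\lambda$.

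The substance is the lower bound $\liminf_n\lambda_n\ge\lambda$. Here I would argue at the level of the symbolic dynamics, estimating $\mathcal{L}_n^k 1(u)$ from below by a sum over admissible strings $i$ of length $k$ that avoid the hole, i.e. those $i$ with $\sigma^j(iu)\notin U_n$ for $0\le j\le k-1$; for such strings the weight $e^{\phi^k(iu)}$ survives all $k$ applications of $\mathcal{L}_n$. Using the Gibbs property \eqref{gibbsoriginal}, this sum is comparable to $\sum'\mu[iu]_k$, the $\mu$-measure of the set of length-$k$ cylinders surviving the hole, i.e. essentially $\mu(\Sigma_n^{(k)})$ where $\Sigma_n^{(k)}=\bigcap_{j=0}^{k-1}\Sigma\setminus\sigma^{-j}(U_n)$. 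So it suffices to show that for each fixed $n$ the survival probability decays subexponentially slowly enough, and then let $n\to\infty$; more precisely, one wants: for every $\delta>0$ there is $n$ with $\lambda_n\ge 1-\delta$. The cleanest route is a counting/covering argument — since $U_n$ is a union of $n$-cylinders and $\mu(U_n)\le c\rho^n$ with $\rho<1$, the number of "forbidden" $n$-cylinders is small relative to the total, and a standard Kac/first-return estimate shows that the top of the spectrum of the open system, $\lambda_n$, satisfies $1-\lambda_n=O(\mu(U_n))\to 0$. In fact, one expects the sharper statement $\lambda_n\to 1$ to follow from the general Keller–Liverani perturbation machinery once one has Lemma \ref{norms} together with the Lasota–Yorke inequality for $\mathcal{L}_n$ alluded to after Proposition \ref{collopthm}: continuity of the leading eigenvalue under the triple-norm perturbation $\|(\mathcal{L}-\mathcal{L}_n)w\|_1\le \|\mathcal{L}(\chi_{U_n}w)\|_1\le\|\chi_{U_n}w\|_1$, which is controlled by $\|w\|_h$ and hence (Lemma \ref{norms}) by $\|w\|_\theta$, with operator-norm bound tending to $0$ since $\mu(U_n)\to 0$.

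Concretely I would proceed as follows. First record $\lambda_n\le 1$ as above. Second, prove the key perturbative bound $\|\mathcal{L}-\mathcal{L}_n\|_{\theta\to h}\to 0$: for $w$ with $\|w\|_\theta\le 1$ one has $(\mathcal{L}-\mathcal{L}_n)w=\mathcal{L}(\chi_{U_n}w)$, and using the pointwise bound $|w|\le c\|w\|_\theta$ from the proof of Lemma \ref{norms} together with $\mu(U_n)\le c\rho^n$ and $\theta>\rho$, one estimates both $\|\mathcal{L}(\chi_{U_n}w)\|_1$ and $|\mathcal{L}(\chi_{U_n}w)|_h$ by a constant times $(\rho/\theta)^{n}$ or $\rho^n$ as appropriate. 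Third, invoke the uniform Lasota–Yorke inequality for $\{\mathcal{L}_n\}$ on $\mathcal{B}_\theta$ (same form as Lemma \ref{LY1}, with constants independent of $n$, which holds because the Gibbs estimates used there are unaffected by deleting a set from the domain) together with Lemma \ref{specgap}; by the Keller–Liverani theorem, the leading eigenvalue $\lambda_n$ and spectral projection depend continuously on $n$ at $n=\infty$, giving $\lambda_n\to\lambda=1$. The main obstacle is the uniformity in the Lasota–Yorke constants for the perturbed operators — one must check that the constant $c$ in $|\mathcal{L}_n^k w|_\theta\le c(\theta^k|w|_\theta+\|w\|_1)$ can be taken independent of $n$, which requires re-running the computation of Lemma \ref{LY1} with $\chi_{U_n^c}$ inserted and verifying that the cancellations and Gibbs bounds go through with the same constants; this, rather than the abstract perturbation step, is where the real work lies.
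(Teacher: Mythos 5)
Your upper bound $\lambda_n\le\lambda$ is fine, but for the lower bound you have taken a route that is genuinely different from the paper's, and as written it has concrete gaps. The paper proves Proposition \ref{prelconv} by a purely combinatorial argument adapted from Lind, using no spectral theory at all: since $U_n\subset[z]_{l_n}$, it suffices to bound below the pressure of the system with the single cylinder $[z]_{l_n}$ removed; one chooses a word $x$ of length $k$ that does not occur as a subword of $z_0z_1\cdots z_{l_n-1}$ (possible because that string has only $l_n-k+1$ subwords of length $k$ while $|B_k|$ grows exponentially), and then concatenates arbitrary admissible blocks of length $l_n-2k-2d$ separated by copies of $x$; every such concatenation survives the hole, and summing the Gibbs weights gives $P_{\tilde\Sigma_n}(\phi)\ge\frac{\log a}{l_n-k}+(1-\epsilon)\frac{l_n}{l_n-k}(P(\phi)-\epsilon)$. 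This elementary proof is deliberate: Proposition \ref{prelconv} is an \emph{input} to Lemma \ref{essspec} (it guarantees $\lambda_n>\theta$, so that Hennion's criterion actually yields quasi-compactness with $\lambda_n$ isolated), which precedes the Keller--Liverani step. Your plan runs the implication the other way. That can in principle be made non-circular, since Keller--Liverani only needs the uniform Lasota--Yorke inequality, the weak-norm bound and the triple-norm convergence, none of which use Proposition \ref{prelconv}; but you would have to say this and reorganize accordingly. By contrast, your side remark that a Kac/first-return estimate gives $1-\lambda_n=O(\mu(U_n))$ \emph{is} circular: that estimate is essentially Proposition \ref{mainpert} (via Lemma \ref{nonper2}), whose proof uses the uniform bounds on $E_n$ and $\Psi_n^k$ (Propositions \ref{resolvantbound1} and \ref{resolvantbound2}), which themselves invoke Proposition \ref{prelconv}.

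The second gap is the uniform Lasota--Yorke inequality, which you rightly identify as the crux but then state in a form that does not hold in the regime needed here: you write $|\mathcal{L}_n^kw|_\theta\le c(\theta^k|w|_\theta+\|w\|_1)$ with $c$ independent of $n$. The remark following Lemma \ref{LasotaYorke} explains that taking $\|\cdot\|_1$ as the weak norm would force $\theta$ to be small, which is incompatible with the constraint $\rho<\theta<1$ that you yourself need in order to make $\||\mathcal{L}-\mathcal{L}_n|\|\le c(\rho\theta^{-1})^n\to0$. The obstruction is the oscillation of $\chi_{U_n^c}w$ on cylinders straddling the boundary of $\sigma^{-j}(U_n)$; controlling it uniformly in $n$ requires the bespoke weak norm $\|\cdot\|_h$ together with the nestedness of the $U_n$ (Lemma \ref{hnk}), yielding $\|\mathcal{L}_n^kw\|_\theta\le c(\theta^k\|w\|_\theta+\|w\|_h)$ (Lemma \ref{LasotaYorke}). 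So your outline can be completed, but only after replacing $\|\cdot\|_1$ by $\|\cdot\|_h$ throughout and actually carrying out the analogues of Lemmas \ref{hnk}, \ref{LasotaYorke} and \ref{pert}; the paper's own proof of this particular proposition needs none of that machinery.
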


\begin{proof}As $U_n\subset [z]_{l_n}$, setting $\tilde{\Sigma}_n=\Sigma\setminus\cap_{k\geq 0} \sigma^{-k}[z]_{l_n}$ it is easy to see that $\tilde{\Sigma}_n\subset \Sigma_n$.  Accordingly, it suffices to show that $P_{\tilde{\Sigma_n}}(\phi)\to P(\phi)$.

As $(\Sigma,\sigma)$ is topologically mixing we may find a positive integer $d$ such that $A^d>0$.  Fix $u\in\Sigma$ and for integers $k$ and $n$ we set 
\begin{eqnarray}\nonumber B_{k}&=&\{x_0 x_1\cdots x_{k-1}\,:\, x_0 x_1 \cdots x_{k-1} u\in \Sigma\},\\
\nonumber B_{k,n}&=&\{x_0 x_1\cdots x_{k-1}\in B_k\,:\,[x_0 x_1 \cdots x_{k-1}]\cap\Sigma_n\neq\emptyset \},\end{eqnarray}
\begin{equation}\nonumber Z_{k}(\phi)=\sum_{x_0 x_1 \cdots x_{k-1}\in B_k} e^{\phi^k(x_0 x_1 \cdots x_{k-1} u)},\,\,\,\,\,  Z_{k,n}(\phi)=\sum_{x_0 x_1 \cdots x_{k-1}\in B_{k,n}} e^{\phi^k(x_0 x_1 \cdots x_{k-1} u)}.\end{equation}

It is easy to see that $\mathcal{L}^k1(u)=Z_k(\phi)$ (resp. $\mathcal{L}_n^k1(u)=Z_{k,n}(\phi)$) and so by equation (2)
% (\ref{sumpressure}) 
we have that $P(\phi)=\lim_{k\to\infty} \frac{1}{k}\log Z_k(\phi)$ (resp. $P_{\Sigma_n}(\phi)=\lim_{k\to\infty} \frac{1}{k}\log Z_{k,n}(\phi)$).

Fix $\epsilon>0$, by equation \ref{sumpressure} there exists $a>0$ such that  $Z_k(\phi)\geq  a e^{k (P(\phi)-\epsilon)}$ for all $k\geq 1$.  In addition, as $h_{top}(\sigma)>0$ there exists $b>0$ such that $|B_k| \geq b e^{k (h_{top}(\sigma)-\epsilon)}$ for all $k\geq 1$. 

Fix large integers $k$ and $n$ such that both $b e^{k (h_{top}( \sigma)-\epsilon)}>l_n-k+1$ and $2(k+d)<l_n\epsilon$.  Observe that the string $z_0 z_1 \cdots z_{l_n-1}$ has precisely $l_n-k+1$ subwords of length $k$, accordingly the first condition on $k$ and $n$ guarantees the existence of a finite word $x\in B_k$ such that $x$ does not appear as a subword of $z_0 z_1 \cdots z_{l_n-1}$.  Fix $m\in\mathbb{N}$ and let $y^{1},y^{2},\ldots,y^{m}\in B_{l_n-2k-2d}$, we now associate with this list an unique element of $B_{m(l_n-k)}$.  Choose $s^1,s^2,\ldots, s^m, t^{1},t^{2},\ldots, t^{m}\in B_d$ so that the word $w:=y^1 s^1 x t^1 y^2 s^2 x t^2 \cdots t^{m-1} y^m s^m x t^m \in \{1,2,\ldots, l\}^{m(l_n-k)}$ is such that $t^m u\in\Sigma$, this is possible as $A^d>0$.  

It is easy to see that as $x$ is contained in any subword of length $n$, the word $z_0 z_1 \cdots z_{l_n-1}$ cannot be contained as a subword of the periodic extension of $w$.  Hence $w\in B_{m(l_n-k),l_n}$, and so 

\begin{equation}\nonumber Z_{m(l_n-k),l_n}(\phi) \geq (a e^{(l_n-2k-2d) (P(\phi)-\epsilon)})^m > (a e^{l_n(1-\epsilon)(P(\phi)-\epsilon)})^m.\end{equation}

Taking logs, dividing by $m$ and letting $m\to\infty$ yields

\begin{equation}\nonumber P_{\tilde{\Sigma}_n}(\phi)\geq \frac{\log(a)}{l_n-k}  + (1-\epsilon)\frac{l_n}{l_n-k}(P(\phi)-\epsilon).\end{equation}

Finally letting $n\to\infty$ and $\epsilon \to 0$ gives the result.
\end{proof}

\begin{remark}The proof of Proposition \ref{prelconv} is modified from \cite{Lin88} where an analogous result for topological entropy is proved.  \end{remark}

\subsection{A Uniform Lasota-Yorke inequality}

We now prove that the transfer operators $\mathcal{L}_n$ satisfy a uniform Lasota-Yorke inequality. We assume that the transfer operator $\mathcal{L}$ is normalised, i.e $\mathcal{L} 1=1$.  Iterating the perturbed operator $\mathcal{L}_n$ we see that
\begin{equation} \nonumber (\mathcal{L}_n^k w)(x)=\sum_{\sigma^{k}(y)=x} h_{n,k}(y)e^{\phi^k(y)}w(y),\end{equation}
where $h_{n,k}(x)=\prod_{j=0}^{k-1} \chi_{U_n^c}(\sigma^j x)$ and $\phi^k(y)=\sum_{j=0}^{k-1}\phi(\sigma^{j}(y))$.

\begin{lemma}For any positive integers $k,n$ we have 
\begin{equation}\nonumber \|\mathcal{L}_n^{k} \|_{h} \leq 1.\end{equation}\label{weak}\end{lemma}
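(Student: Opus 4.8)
The plan is to estimate separately the two pieces of the weak norm, $\|\mathcal{L}_n^k w\|_1$ and $|\mathcal{L}_n^k w|_h$, using the fact that $\mathcal{L}$ is normalised so that $\mathcal{L}1 = 1$ and hence $\int \mathcal{L} u \, d\mu = \int u \, d\mu$ and, more importantly, $\mathcal{L}(|u|) \ge |\mathcal{L} u|$ pointwise. For the $L^1$ part: since $\mathcal{L}_n^k w = \mathcal{L}^k(h_{n,k} w)$ with $0 \le h_{n,k} \le 1$ and $\mathcal{L}$ a positive operator preserving $\int \cdot \, d\mu$, we get $\|\mathcal{L}_n^k w\|_1 = \int |\mathcal{L}^k(h_{n,k}w)| \, d\mu \le \int \mathcal{L}^k(h_{n,k}|w|) \, d\mu = \int h_{n,k}|w| \, d\mu \le \|w\|_1$. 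So the $L^1$ component is already controlled by $\|w\|_1$ alone.

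For the seminorm $|\mathcal{L}_n^k w|_h = \sup_{j \ge 0}\sup_{m \ge 1}\theta^{-m}\int_{\sigma^{-j}(U_m)}|\mathcal{L}_n^k w|\,d\mu$, the key observation is a change-of-variables/duality identity: for any nonnegative measurable $\psi$ and any measurable $u$, $\int \psi \cdot |\mathcal{L}^k u| \, d\mu \le \int \psi \cdot \mathcal{L}^k(|u|)\, d\mu = \int (\psi \circ \sigma^k)\,|u|\,d\mu$, using $\mathcal{L}^{*}\mu = \mu$ (normalisation) together with the identity $\int \psi \cdot \mathcal{L}^k(v)\,d\mu = \int (\psi\circ\sigma^k) v\, d\mu$ valid for $v \ge 0$. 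Applying this with $\psi = \chi_{\sigma^{-j}(U_m)}$ and $u = h_{n,k} w$ gives
\begin{equation}\nonumber \int_{\sigma^{-j}(U_m)} |\mathcal{L}_n^k w|\, d\mu \le \int_{\sigma^{-(j+k)}(U_m)} h_{n,k}|w|\, d\mu \le \int_{\sigma^{-(j+k)}(U_m)} |w|\, d\mu.\end{equation}
Taking $\theta^{-m}$ and the supremum over $j \ge 0$ and $m \ge 1$, the right-hand side is bounded by $\sup_{j' \ge 0}\sup_{m\ge 1}\theta^{-m}\int_{\sigma^{-j'}(U_m)}|w|\,d\mu = |w|_h$, since $\{j+k : j \ge 0\} \subseteq \{j' \ge 0\}$. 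Hence $|\mathcal{L}_n^k w|_h \le |w|_h$, and combining the two estimates yields $\|\mathcal{L}_n^k w\|_h = |\mathcal{L}_n^k w|_h + \|\mathcal{L}_n^k w\|_1 \le |w|_h + \|w\|_1 = \|w\|_h$, i.e. $\|\mathcal{L}_n^k\|_h \le 1$.

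I expect the only genuinely delicate point to be the justification of the duality identity $\int \psi\cdot\mathcal{L}^k(v)\,d\mu = \int(\psi\circ\sigma^k)v\,d\mu$ at the level of $L^1$ functions rather than continuous ones — but this follows from $\mathcal{L}^{*}\nu = \nu$ in Proposition \ref{ruelleopthm}, the relation $d\mu = g\,d\nu$, and the defining property $\mathcal{L}(g\cdot(u\circ\sigma)) = g\cdot u$ of the normalised transfer operator (equivalently $\mathcal{L}_\phi$ with eigenfunction absorbed), extended from continuous to $L^1(\mu)$ functions by density. Everything else — positivity of $\mathcal{L}$, the pointwise bound $|\mathcal{L}u| \le \mathcal{L}|u|$, and the bookkeeping with the indices $j$ and $m$ — is routine. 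Note in particular that no Lasota-Yorke-type gain (a factor $\theta^k$) is needed here: the claim is merely the contraction $\|\mathcal{L}_n^k\|_h \le 1$, which is why the crude bounds above suffice.
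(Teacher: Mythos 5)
Your proof is correct and follows essentially the same approach as the paper: bound the $L^1$ part via $|\mathcal{L}u|\le\mathcal{L}|u|$ and the normalisation $\int\mathcal{L}u\,d\mu=\int u\,d\mu$, and bound the $h$-seminorm by pushing $\chi_{\sigma^{-j}(U_m)}$ forward by $\sigma^k$ to land in $\sigma^{-(j+k)}(U_m)$, which is still of the form appearing in $|\cdot|_h$. The only cosmetic difference is that you argue directly for the $k$-th iterate, whereas the paper establishes the contraction for a single application and then iterates; the underlying estimates are identical.
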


\begin{proof}Let $w\in L^{1}$, then 
\begin{eqnarray} \nonumber \|\mathcal{L}_nw\|_{1} & = & \int | \mathcal{L} \chi_{U_n^{c}} w | d\mu \\
\nonumber &\leq & \int \mathcal{L} | \chi_{U_n^{c}} w | d\mu\\
\label{L1} &= & \int  | \chi_{U_n^{c}} w | d\mu \leq \|w\|_{1}.\end{eqnarray}
In addition, fixing $j\geq 0$, $m\geq 1$ we see that
\begin{eqnarray} \nonumber \theta^{-m}\int_{\sigma^{-j}(U_m)} |\mathcal{L}_n w| d\mu &\leq& \theta^{-m}\int_{\sigma^{-j}(U_m)} \mathcal{L}(|\chi_{U_n^{c}} w|) d\mu \\
\nonumber & = & \theta^{-m}\int_{\sigma^{-(j+1)}(U_m)}\chi_{U_n^{c}} |w| d\mu \\
\nonumber &\leq &  \theta^{-m}\int_{\sigma^{-(j+1)}(U_m)}|w| d\mu \leq |w|_h.\end{eqnarray}
Taking the supremum over $j$ and $m$ yields
\begin{equation}\label{hole} |\mathcal{L}_nw|_h \leq |w|_h .\end{equation}
Combining equations (\ref{L1}) and (\ref{hole}) and iterating completes the proof.
\end{proof}

\begin{lemma}There exists a constant $c>0$ such that for any positive integers $n,k$ we have 
\begin{equation}\nonumber |h_{n,k}w|_\theta \leq |w|_\theta+c\theta^{-k}\|w\|_h\end{equation}for all $w\in\mathcal{B}_\theta$.	\label{hnk}\end{lemma}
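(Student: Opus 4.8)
The plan is to bound $\theta^{-m}\|{\rm osc}(h_{n,k}w,m,\cdot)\|_1$ for each fixed $m\geq1$ and then take the supremum over $m$. Write $V=\bigcup_{j=0}^{k-1}\sigma^{-j}(U_n)$, so that $h_{n,k}=\chi_{V^{c}}$ and $V$ is a finite union of cylinders of length at most $n+k-1$. When $m\geq n+k-1$ the function $h_{n,k}$ is constant on each length-$m$ cylinder, so ${\rm osc}(h_{n,k}w,m,x)=h_{n,k}(x)\,{\rm osc}(w,m,x)\leq{\rm osc}(w,m,x)$ and the corresponding term is at most $|w|_\theta$; thus all the work is for $m<n+k-1$.

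Fix such an $m$. I would split the length-$m$ cylinders into those contained in $V^{c}$, those contained in $V$, and the family $\mathcal{D}_m$ of length-$m$ cylinders meeting both $V$ and $V^{c}$; write $D_m=\bigcup\mathcal{D}_m$. On a cylinder of the first class ${\rm osc}(h_{n,k}w,m,\cdot)={\rm osc}(w,m,\cdot)$, on one of the second it vanishes, and on $[x]_m\in\mathcal{D}_m$ the values taken by $h_{n,k}w$ on $[x]_m$ are $0$ together with the values of $w$ on $[x]_m\cap V^{c}$, whence
\[
{\rm osc}(h_{n,k}w,m,x)\ \leq\ \max\Bigl\{{\rm osc}(w,m,x),\ {\rm esssup}_{[x]_m\cap V^{c}}|w|\Bigr\}\ =\ {\rm osc}(w,m,x)+\Bigl({\rm esssup}_{[x]_m\cap V^{c}}|w|-{\rm osc}(w,m,x)\Bigr)^{+}.
\]
The elementary inequality $|w(z)|\geq|w(y)|-|w(y)-w(z)|$ shows that, when it is nonzero, the positive part above is at most ${\rm essinf}_{[x]_m}|w|\leq\frac{1}{\mu[x]_m}\int_{[x]_m}|w|\,d\mu$. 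Integrating over $\Sigma$, and using that the first class and $\mathcal{D}_m$ are disjoint so that the ${\rm osc}(w,m,\cdot)$ contributions add up to at most $\|{\rm osc}(w,m,\cdot)\|_1\leq\theta^m|w|_\theta$, I obtain
\[
\|{\rm osc}(h_{n,k}w,m,\cdot)\|_1\ \leq\ \theta^m|w|_\theta+\int_{D_m}|w|\,d\mu .
\]

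It then remains to show $\theta^{-m}\int_{D_m}|w|\,d\mu\leq c\,\theta^{-k}\|w\|_h$. If a length-$m$ cylinder meets $\sigma^{-j}(U_n)$ with $j+n\leq m$ it is automatically contained in $\sigma^{-j}(U_n)\subseteq V$, hence not in $\mathcal{D}_m$; so $D_m\subseteq\bigcup_{j=p}^{k-1}\{x:[x]_m\cap\sigma^{-j}(U_n)\neq\emptyset\}$ with $p=\max(0,m-n+1)$. For $p\leq j\leq m-1$, meeting $\sigma^{-j}(U_n)$ forces $x_j\cdots x_{m-1}$ to be a proper prefix of a word defining $U_n$; since the $U$'s are nested and each $U_l$ is a union of length-$l$ cylinders, that prefix determines a cylinder contained in $U_{m-j}$, so $\{x:[x]_m\cap\sigma^{-j}(U_n)\neq\emptyset\}\subseteq\sigma^{-j}(U_{m-j})$ and the associated integral is $\leq\theta^{m-j}|w|_h$ by the definition of $|w|_h$. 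For $m\leq j\leq k-1$ (at most $(k-m)^{+}$ values) I would just use $\int|w|\,d\mu=\|w\|_1$. Summing, and using $\theta<1$ together with $\theta^{-m}=\theta^{k-m}\theta^{-k}$,
\[
\theta^{-m}\int_{D_m}|w|\,d\mu\ \leq\ \sum_{j=p}^{\min(m-1,k-1)}\theta^{-j}|w|_h+(k-m)^{+}\theta^{-m}\|w\|_1\ \leq\ \frac{\theta}{1-\theta}\,\theta^{-k}|w|_h+\Bigl(\sup_{t\geq0}t\theta^{t}\Bigr)\theta^{-k}\|w\|_1,
\]
both summands being $\leq c\,\theta^{-k}\|w\|_h$. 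Combining with the previous display and taking the supremum over $m$ gives the lemma.

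The step I expect to need the most care is keeping the coefficient of $|w|_\theta$ equal to $1$. The naive estimate ${\rm esssup}_{[x]_m\cap V^{c}}|w|\leq\frac{1}{\mu[x]_m}\int_{[x]_m}|w|\,d\mu+{\rm osc}(w,m,x)$ would, after summing over $\mathcal{D}_m$, produce a second spurious copy of $|w|_\theta$; it is the positive-part decomposition, together with the observation that a genuinely large ${\rm esssup}$ over $[x]_m$ forces a correspondingly large ${\rm essinf}$ over $[x]_m$, that prevents this. The other point requiring attention is the combinatorial reduction in the last paragraph: that a cylinder straddling $\partial V$ can meet only the pieces $\sigma^{-j}(U_n)$ with $j\geq m-n+1$, and the passage from ``$x_j\cdots x_{m-1}$ is a prefix of a $U_n$-word'' to ``$x\in\sigma^{-j}(U_{m-j})$''. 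It is precisely this that makes the weak norm $\|w\|_h$, rather than the strong norm, appear on the right-hand side.
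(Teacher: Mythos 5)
Your argument is correct and is essentially the paper's proof: the same case analysis on $m$ relative to the lengths of the cylinders making up the hole, the same use of nestedness to place a straddling length-$m$ cylinder inside $\sigma^{-j}(U_{m-j})$ so that the weak seminorm $|\cdot|_h$ absorbs it with a factor $\theta^{m-j}$, the same bound of the cut-off oscillation by ${\rm osc}(w,m,x)$ plus the cylinder average of $|w|$, and the same geometric summation $\sum_j\theta^{-j}\leq c\,\theta^{-k}$. The only difference is organizational --- the paper proves the single-factor inequality $|\chi_{\sigma^{-j}(U_n^c)}w|_\theta\leq|w|_\theta+\theta^{-j}\|w\|_h$ and iterates over $j=0,\dots,k-1$, while you treat $h_{n,k}=\chi_{V^c}$ in one pass and only decompose $V$ when estimating $\int_{D_m}|w|\,d\mu$ --- and, incidentally, your positive-part precaution is unnecessary: since ${\rm esssup}_{[x]_m\cap V^c}|w|\leq{\rm osc}(w,m,x)+\mu[x]_m^{-1}\int_{[x]_m}|w|\,d\mu$, one gets $\max\{{\rm osc}(w,m,x),\,{\rm esssup}_{[x]_m\cap V^c}|w|\}\leq{\rm osc}(w,m,x)+\mu[x]_m^{-1}\int_{[x]_m}|w|\,d\mu$ directly (a maximum, unlike a sum, does not double the common term), which is exactly the estimate the paper uses.
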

	
\begin{proof}	
	We prove this by induction, namely we prove that for any $w\in\mathcal{B}_\theta$ we have
	\begin{equation}\label{induction} |\chi_{\sigma^{-j}(U_n^c)} w|_\theta \leq |w|_\theta+\theta^{-j}\|w\|_h.\end{equation}

To show this, fix a positive integer $m$, we consider two cases, namely: $j+n\leq m$ and $m<j+n$.  If we suppose that $j+n\leq m$ then ${\rm osc}(\chi_{\sigma^{-j}(U_n^c)}w,m,x)\leq {\rm osc}(w,m,x)$ for all $x\in\Sigma$, and thus

\begin{equation}\label{1case1} \theta^{-m}\int {\rm osc}(\chi_{\sigma^{-j}(U_n^c)}w,m,x) d\mu(x) \leq \theta^{-m}\int {\rm osc}(w,m,x)d\mu(x) \leq |w|_\theta.\end{equation}

On the other hand if $m<j+n$ it is easy to see that if $[x]_m\subset \sigma^{-j}(U_n^c)$ then ${\rm osc}(\chi_{\sigma^{-j}(U_n^c)}w,m,x)={\rm osc}(w,m,x)$.  On the  other hand if $[x]_m\cap\sigma^{-j}(U_n)\neq\emptyset$ then ${\rm osc}(\chi_{\sigma^{-j}(U_n^c)}w,m,x)=\max( {\rm osc}(w,m,x), \|\chi_{[x]_m} w \|_\infty)$, in which case

\begin{eqnarray}\nonumber {\rm osc}(\chi_{\sigma^{-j}(U_n^c)}w,m,x) &=& \max({\rm osc}(w,m,x),\|\chi_{[x]_m}w\|_\infty) \\
\nonumber &\leq& {\rm osc}(w,m,x)+\frac{1}{\mu[x]_m}\int_{[x]_m}|w|d\mu.\end{eqnarray}

Which implies that

\begin{equation}
\label{case2} \theta^{-m}\int {\rm osc}(\chi_{\sigma^{-j}(U_n^c)}w,m,x) d\mu(x) \leq |w|_\theta+\theta^{-m}\int_{\{x\,:\,[x]_m\cap\sigma^{-j}(U_n)\neq\emptyset\}}|w|d\mu.\end{equation}
		
We now analyse two further subcases, if $m\leq j$ then we see that 

\begin{equation}
\label{1case21}\theta^{-m}\int_{\{x\,:\,[x]_m\cap\sigma^{-j}(U_n)\neq\emptyset\}}|w|d\mu\leq \theta^{-j}\|w\|_1.\end{equation}

If $j<m<j+n$, the fact that the open sets $\{U_n\}_n$ are nested implies that

\begin{equation}\nonumber \{x\,:\,[x]_m\cap\sigma^{-j}(U_n)\neq\emptyset\}\subset \sigma^{-j}(U_{m-j}).\end{equation}

In which case

\begin{equation}\label{1case22} \theta^{-m}\int_{\{x\,:\,[x]_m\cap\sigma^{-j}(U_n)\neq\emptyset\}}|w|d\mu\leq \theta^{-j}|w|_h.\end{equation}

If we combine equations (\ref{1case1}), (\ref{1case21}) and (\ref{1case22}) we obtain (\ref{induction}).  This completes the proof.\end{proof}

\begin{lemma}There exists a constant $c>0$ such that 
	\begin{equation}\nonumber \|\mathcal{L}^k_nw\|_\theta\leq c(\theta^k \|w\|_\theta+\|w\|_{h})\end{equation} for all $w\in\mathcal{B}_\theta$ and $n,k\geq 1$.\label{LasotaYorke}
\end{lemma}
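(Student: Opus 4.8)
The plan is to reduce the statement to the unperturbed Lasota--Yorke inequality of Lemma~\ref{LY1} by means of the factorisation $\mathcal{L}_n^k w=\mathcal{L}^k(h_{n,k}w)$ recorded just above Lemma~\ref{weak}, combined with the oscillation bound for the cutoff weight in Lemma~\ref{hnk} and the weak-norm contraction in Lemma~\ref{weak}. In other words, the work has essentially been done in the preceding three lemmas, and what remains is to assemble them correctly.

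First I would note that, since $\mathcal{L}$ is normalised, $(\mathcal{L}_n^k w)(x)=\sum_{\sigma^k(y)=x}h_{n,k}(y)e^{\phi^k(y)}w(y)=\mathcal{L}^k(h_{n,k}w)(x)$. Before applying Lemma~\ref{LY1} to $h_{n,k}w$ one must know that $h_{n,k}w\in\mathcal{B}_\theta$; this follows from Lemma~\ref{hnk}, which gives $|h_{n,k}w|_\theta\le |w|_\theta+c\theta^{-k}\|w\|_h$, together with Lemma~\ref{norms}, which gives $\|w\|_h\le c\|w\|_\theta<\infty$. Applying Lemma~\ref{LY1} to $h_{n,k}w$ then yields
\[ |\mathcal{L}_n^k w|_\theta=|\mathcal{L}^k(h_{n,k}w)|_\theta\le c\bigl(\theta^k|h_{n,k}w|_\theta+\|h_{n,k}w\|_1\bigr). \]

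Next I would insert the two elementary bounds $\|h_{n,k}w\|_1\le\|w\|_1\le\|w\|_h$ (valid since $0\le h_{n,k}\le 1$) and $|h_{n,k}w|_\theta\le|w|_\theta+c\theta^{-k}\|w\|_h$ from Lemma~\ref{hnk}. The key point is the cancellation $\theta^k\cdot\theta^{-k}=1$, so the contribution of the weak norm is only $c\|w\|_h$, leaving $|\mathcal{L}_n^k w|_\theta\le c(\theta^k|w|_\theta+\|w\|_h)$. For the $L^1$ component, Lemma~\ref{weak} ($\|\mathcal{L}_n^k\|_h\le 1$) gives $\|\mathcal{L}_n^k w\|_1\le\|\mathcal{L}_n^k w\|_h\le\|w\|_h$. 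Adding the two estimates and bounding $|w|_\theta\le\|w\|_\theta$ gives $\|\mathcal{L}_n^k w\|_\theta\le c(\theta^k\|w\|_\theta+\|w\|_h)$, as required.

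There is essentially no remaining obstacle: the genuinely delicate estimate --- controlling the oscillation of the discontinuous weight $h_{n,k}$ uniformly in $n$ while paying only a factor $\theta^{-k}$ against the weak norm --- was already carried out in Lemma~\ref{hnk}, and the $n$-uniform weak-norm contraction is Lemma~\ref{weak}. The only subtlety to flag is the order of the argument: one must verify $h_{n,k}w\in\mathcal{B}_\theta$ (via Lemmas~\ref{hnk} and~\ref{norms}) \emph{before} invoking Lemma~\ref{LY1}, since the latter is stated only for elements of $\mathcal{B}_\theta$.
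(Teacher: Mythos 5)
Your proof is correct and takes essentially the same approach as the paper: both rest on the factorisation $\mathcal{L}_n^k w=\mathcal{L}^k(h_{n,k}w)$ together with Lemmas~\ref{hnk} and~\ref{weak}. The only organisational difference is that you invoke the unperturbed Lasota--Yorke bound (Lemma~\ref{LY1}) directly for $h_{n,k}w$, whereas the paper re-runs that oscillation estimate inline for $\mathcal{L}_n^k$ before substituting Lemma~\ref{hnk}; the underlying computation is the same.
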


\begin{proof}
	
Fix $x,y\in \Sigma$ and suppose $d_\theta(x,y)=\theta^m$, with $m\geq 1$, then 
\begin{eqnarray}\nonumber |(\mathcal{L}_n^k w )(x)-(\mathcal{L}_n^k w )(y) | & \leq & \sum_{|i|=k}|e^{\phi^k(ix)}h_{n,k}(ix)w(ix)-e^{\phi^k(iy)}h_{n,k}(iy)w(iy)| \\
	\nonumber &\leq & \sum_{|i|=k}e^{\phi^k(ix)}|h_{n,k}(ix)w(ix)-h_{n,k}(iy)w(iy)|\\ \nonumber && +e^{\phi^k(ix)}|1-e^{\phi^k(iy)-\phi^k(ix)}||w(iy)|\\
	\nonumber &\leq & \sum_{|i|=k} e^{\phi^k(ix)}\left[{\rm osc}(h_{n,k}w,k+m,ix)+c\cdot {\rm osc}(w,k+m,ix)\right]\\ \nonumber && + c\theta^{m}\sum_{|i|=k} \frac{e^{\phi^k(ix)}}{\mu[ix]_{k+m}}\int_{\mu[ix]_{k+m}}|w|d\mu.
\end{eqnarray}
Integrating and dividing by $\theta^m$ implies that
\begin{equation}|\mathcal{L}_n^k w|_\theta \leq c\theta^k (|w|_\theta+|h_{n,k}w|_\theta)+c\|w\|_1. \label{thetaseminorm}\end{equation}

And so from equations (\ref{L1}) and (\ref{thetaseminorm}) along with lemma \ref{hnk} we deduce that

\begin{eqnarray}\nonumber \|\mathcal{L}_n^k w\|_\theta  &=& |\mathcal{L}_n^k w|_\theta + \|\mathcal{L}_n^k w\|_1 \\
\nonumber &\leq & c\theta^k (|h_{n,k}w|_\theta+ |w|_\theta) + \|w\|_{1} \\
	\nonumber & \leq & c\theta^k |w|_\theta + c\|w\|_h\leq c\theta^k \|w\|_\theta + c\|w\|_h.\end{eqnarray}
This completes the proof.
\end{proof}

\begin{remark}The advantage of introducing the weak norm $\|\cdot\|_h$ is that it overcomes the restrictions imposed by the usual weak norm $\|\cdot\|_1$.  In particular, had we considered the usual $\|\cdot\|_1$-norm it would have imposed the condition that $0<\theta<1$ be chosen sufficiently small (leading to complications later in the proof when we also require $\rho<\theta<1$).\end{remark}

\subsection{Quasi-compactness of $\mathcal{L}_n$}

A prerequisite for proving quasi-compactness of $\mathcal{L}_n$ is that the unit ball is compact with respect to the weak norm.

\begin{prop} The set $B= \{ w\in\mathcal{B}_\theta\,:\,\|w\|_\theta\leq 1\}$ is $\| \cdot \|_h$-compact. \label{compact}\end{prop}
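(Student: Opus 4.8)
The plan is to combine Proposition \ref{Kellercompact} (Keller's $L^1$-compactness of $\|\cdot\|_\theta$-balls) with Lemma \ref{norms} (the domination $\|w\|_h \leq c\|w\|_\theta$) and an explicit estimate showing that on the set $B$ the $\|\cdot\|_h$-topology and the $\|\cdot\|_1$-topology coincide; compactness in the latter is already known. Since $B$ is $L^1$-closed (a routine consequence of $|\cdot|_\theta$ being lower semicontinuous under $L^1$-convergence, via Fatou applied to the essential-supremum oscillations) and $L^1$-compact by Proposition \ref{Kellercompact}, it suffices to show that any $L^1$-convergent sequence in $B$ is also $\|\cdot\|_h$-convergent, i.e. that $\|\cdot\|_h$ is continuous on $(B,\|\cdot\|_1)$.

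First I would take $w_k \to w$ in $L^1$ with all $w_k, w \in B$ (so $|w_k|_\theta \leq 1$, and $|w|_\theta \leq 1$ by closedness). The $\|w_k - w\|_1$ part of $\|w_k - w\|_h$ already tends to $0$, so the work is entirely in the seminorm $|w_k - w|_h = \sup_{j\geq 0}\sup_{m\geq 1}\theta^{-m}\int_{\sigma^{-j}(U_m)}|w_k - w|\,d\mu$. Split the double supremum at a threshold $m \leq M$ versus $m > M$. For the tail $m > M$: using assumption (3) on the $U_m$, $\mu(\sigma^{-j}(U_m)) = \mu(U_m) \leq c\rho^m$, and the $L^\infty$ bound $\|v\|_\infty \leq c\|v\|_\theta$ from the proof of Lemma \ref{norms} applied to $v = w_k - w$ (which has $\|w_k - w\|_\theta \leq 2 + \|w_k-w\|_1$, hence is bounded), gives $\theta^{-m}\int_{\sigma^{-j}(U_m)}|w_k-w|\,d\mu \leq c\theta^{-m}\rho^m \cdot \|w_k - w\|_\infty \to 0$ as $m\to\infty$ uniformly in $j$ and $k$, since $\theta \in (\rho,1)$; so this tail is $<\varepsilon$ once $M$ is large. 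For the finitely many remaining values $m \leq M$ (and the supremum over all $j\geq 0$): here the relevant integrals are over sets $\sigma^{-j}(U_m)$ of measure at most $c\rho^m$, and $\int_{\sigma^{-j}(U_m)}|w_k - w|\,d\mu \leq \|w_k - w\|_1 \to 0$, the bound being uniform in $j$ and in the finitely many $m \leq M$; hence $\sup_{j}\sup_{m\leq M}\theta^{-m}\int_{\sigma^{-j}(U_m)}|w_k-w|\,d\mu \leq \theta^{-M}\|w_k-w\|_1 \to 0$. Combining, $|w_k - w|_h \to 0$, so $\|w_k - w\|_h \to 0$.

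I expect the main obstacle to be the uniformity over $j \geq 0$ in the mid-range part of the argument: one must make sure the splitting threshold $M$ does not depend on $j$, which is why it is essential that the tail estimate for $m > M$ is uniform in $j$ (it is, because it rests only on $\mu(\sigma^{-j}(U_m)) = \mu(U_m)$, which is $j$-independent by $\sigma$-invariance of $\mu$, together with the $\theta > \rho$ gap). A secondary technical point is the $L^1$-closedness of $B$, needed so that the limit $w$ genuinely lies in $\mathcal{B}_\theta$ with $|w|_\theta \leq 1$; this follows from lower semicontinuity of $w \mapsto \|\mathrm{osc}(w,m,\cdot)\|_1$ under $L^1$-convergence for each fixed $m$, and then taking $\sup_m \theta^{-m}(\cdot)$. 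Once these uniformities are in hand the proof is a short $\varepsilon/2$ splitting, and $\|\cdot\|_h$-compactness of $B$ follows from its $\|\cdot\|_1$-compactness together with the continuity of the identity map $(B,\|\cdot\|_1) \to (B,\|\cdot\|_h)$ just established.
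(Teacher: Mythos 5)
Your proposal is correct and follows essentially the same route as the paper: extract an $L^1$-convergent subsequence from $B$ via Proposition \ref{Kellercompact}, then control the seminorm $|\cdot|_h$ by splitting at a threshold $M$, handling $m>M$ with the uniform $L^\infty$ bound together with $\theta^{-m}\mu(U_m)\to 0$ (which is uniform in $j$ by shift-invariance of $\mu$), and $m\leq M$ with the factor $\theta^{-M}\|f_{n_k}-f\|_1\to 0$. The only cosmetic difference is that you phrase the conclusion as continuity of the identity $(B,\|\cdot\|_1)\to(B,\|\cdot\|_h)$ and spell out the $L^1$-closedness of $B$, which the paper absorbs into Keller's compactness statement.
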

	
\begin{proof}Let $(f_n)_n\in B$ be any sequence.  By Proposition \ref{Kellercompact} there exists a subsequence $(f_{n_k})_k$ and $f\in B$ such that $\|f_{n_k}-f\|_{1}\rightarrow 0$.  It suffices to show that $|f_{n_k}-f|_h\rightarrow 0$. As $f,f_{n_k}\in B$ we have that $c=\sup_{k\geq 1} \|f-f_{n_k}\|_\infty <\infty$. 
Fix $\epsilon>0$ and choose a positive integer $M$ such that $\theta^{-m}\mu(U_m)\leq \epsilon/c$ for all $m>M$.  Choose a positive integer $K$ such that $\|f-f_{n_k}\|_{1}\leq \theta^M \epsilon$ for all $k\geq K$. For fixed $m,j$, then if $m>M$ we have
\begin{equation}
	\label{2case1} \theta^{-m}\int_{\sigma^{-j}(U_m)} |f-f_{n_k}|d\mu  \leq  \theta^{-m}\mu(U_m)\|f-f_{n_k}\|_\infty <\epsilon.\end{equation}
	Otherwise $m\leq M$, in which case for $k\geq K$ we have
\begin{equation}\label{2case2} \theta^{-m}\int_{\sigma^{-j}(U_m)} |f-f_{n_k}|d\mu \leq \theta^{-M}\int |f-f_{n_k}|d\mu <\epsilon.\end{equation}
	
Taking equations (\ref{2case1}) and (\ref{2case2}) together implies that $|f-f_{n_k}|_h<\epsilon$ for $k\geq K$.  This completes the proof.\end{proof}

We now prove quasi-compactness of $\mathcal{L}_n$ using a critereon of Hennion.

\begin{lemma}The essential spectral radii of the operators $\mathcal{L}_n$ is uniformly bounded by $\theta$.\label{essspec}
\end{lemma}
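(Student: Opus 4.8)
The plan is to deduce this directly from the abstract quasi-compactness criterion of Hennion, applied with the strong norm $\|\cdot\|_\theta$ and the weak norm $\|\cdot\|_h$. Recall the criterion in the following form: if $T$ is a bounded linear operator on a Banach space $(\mathcal{B},\|\cdot\|)$, if $|\cdot|$ is a seminorm on $\mathcal{B}$ with $|\cdot|\leq C\|\cdot\|$ for which the closed $\|\cdot\|$-unit ball is relatively compact in the topology induced by $|\cdot|$, and if there exist sequences of positive reals $(a_k)_k,(b_k)_k$ with
\begin{equation}\nonumber \|T^k w\|\leq a_k\|w\|+b_k|w|\qquad\text{for all }w\in\mathcal{B},\ k\geq 1,\end{equation}
then the essential spectral radius of $T$ satisfies $\rho_{\mathrm{ess}}(T)\leq\liminf_{k\to\infty}a_k^{1/k}$.

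First I would record that all the hypotheses are already in place, uniformly in $n$. The space $(\mathcal{B}_\theta,\|\cdot\|_\theta)$ is complete by Proposition \ref{Kellercompact}; each $\mathcal{L}_n$ is bounded on it, since Lemma \ref{LasotaYorke} with $k=1$ together with Lemma \ref{norms} gives $\|\mathcal{L}_n w\|_\theta\leq c(\theta+c)\|w\|_\theta$. Lemma \ref{norms} shows $\|\cdot\|_h\leq c\|\cdot\|_\theta$, so $\|\cdot\|_h$ is a continuous seminorm on $(\mathcal{B}_\theta,\|\cdot\|_\theta)$, and Proposition \ref{compact} is precisely the statement that the $\|\cdot\|_\theta$-unit ball is $\|\cdot\|_h$-precompact; Lemma \ref{weak} additionally gives $\|\mathcal{L}_n^k w\|_h\leq\|w\|_h$, so the operators are power bounded for the weak norm. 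Finally, Lemma \ref{LasotaYorke} furnishes exactly the Doeblin--Fortet (Lasota--Yorke) inequality needed, with $a_k=c\theta^k$ and $b_k=c$.

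Applying the criterion to $T=\mathcal{L}_n$ then gives $\rho_{\mathrm{ess}}(\mathcal{L}_n)\leq\liminf_{k\to\infty}(c\theta^k)^{1/k}=\theta$ for every $n$. The key point, and the only thing requiring a moment's care, is the uniformity: the bound produced by Hennion's criterion depends only on the exponential rate $\theta$ appearing in $a_k$, and not on the multiplicative constant $c$, the auxiliary sequence $b_k$, or the norm of the individual operator $\mathcal{L}_n$. Hence it suffices to note that the constants in Lemmas \ref{norms}, \ref{weak}, \ref{LasotaYorke} and in Proposition \ref{compact} are all independent of $n$, which is visible from their proofs since they originate only in the Gibbs property (\ref{gibbsoriginal}) and the combinatorics of cylinder sets. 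I expect this bookkeeping, rather than any analytic difficulty, to be the main (and mild) obstacle: there is no new estimate to establish here, only an invocation of the abstract theorem together with the $n$-independence of the already proved inequalities.
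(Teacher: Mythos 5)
Your proposal is correct and follows essentially the same route as the paper: the authors likewise invoke Hennion's criterion (\cite{Hen93}, Corollary 1), verifying its hypotheses via the uniform Lasota--Yorke inequality of Lemma \ref{LasotaYorke} and the $\|\cdot\|_h$-compactness of the unit ball from Proposition \ref{compact}, with the $n$-independence of the constants giving the uniform bound $\liminf_k (c\theta^k)^{1/k}=\theta$. The only cosmetic difference is that the paper's version of the criterion also records the condition $r<\lambda_n$ (supplied by Proposition \ref{prelconv}) to conclude quasi-compactness, which your formulation of the essential-spectral-radius bound does not need.
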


\begin{proof}To show that the essential spectral radius of $\mathcal{L}_n$ is bounded by $\theta$ we note that Lemmas \ref{LasotaYorke} and \ref{compact} show that the operators $\mathcal{L}_n$ satisfy the hypotheses of \cite{Hen93}[Corollary 1], namely:  
	
\begin{enumerate}
\item  $\mathcal{L}_n(\{w\in\mathcal{B}_\theta\,:\,\|w\|_\theta\leq 1\})$ is conditionally compact in $ (\mathcal{B}_\theta,\|\cdot\|_h)$. \label{qcompact1}
\item For each $k$, there exists positive real number $R_k$, $r_k$ such that $\liminf_{k\to\infty}(r_k)^{1/k}=r<\lambda_n$ for which 
\begin{equation}\nonumber \|\mathcal{L}_n^k(w)\|_\theta\leq r_k\|w\|_\theta+R_k\|w\|_h\,\,\text{ for all }w\in\mathcal{B}_\theta.\end{equation}\label{qcompact2}
\end{enumerate}	

In which case we conclude that $\mathcal{L}_n$ is quasi-compact with essential spectral radius bounded by $r$.  Condition (\ref{qcompact1}) can be deduced from Proposition \ref{Kellercompact} while condition (\ref{qcompact2}) is the uniform Lasota-Yorke inequality proved in Lemma \ref{LasotaYorke}.  Finally Proposition \ref{prelconv} implies that for any $\theta\in (0,1)$ we have that $\lambda_n>\theta$ for large $n$.\end{proof}

\subsection{Stability of the spectrum}

We introduce a so called `asymmetric operator norm' for which the operators $\mathcal{L}_n$ converge to $\mathcal{L}$ as $n\to\infty$.  For a linear operator $Q:\mathcal{B}_\theta\rightarrow \mathcal{B}_\theta$ we define 
\begin{equation}\nonumber \|| Q|\| = \sup\{ \|Qw\|_h \,:\, \|w\|_\theta\leq 1\}. \end{equation}
	
Recall the Gibbs property (\ref{gibbsoriginal}) of $\mu$, namely that there exists a constant $c>1$ such that for any $x\in\Sigma$ and positive integer $n$ we have that 

	\begin{equation} c^{-1}\leq \frac{\mu[x]_n}{e^{\phi^n(x)}}\leq c.\end{equation}

Using the above it is relatively easy to show the following proposition, which is stated without proof.

\begin{prop}[Gibbs property]There exists a constant $c>0$ such that for any positive integers $n,m$ and $j\geq n$ we have

\begin{equation}\nonumber\mu(U_n\cap\sigma^{-j}(U_m))\leq c\mu(U_n)\mu(U_m).\end{equation}	
		
\label{gibbs}\end{prop}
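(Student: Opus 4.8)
The plan is to exploit the product structure forced by assumption~(2): since $U_n$ is a finite union of cylinders of length $n$ and $U_m$ a finite union of cylinders of length $m$, write $U_n=\bigsqcup_a[a]$ and $U_m=\bigsqcup_b[b]$, the unions being over the finitely many admissible words $a$ of length $n$ with $[a]\subset U_n$ (resp.\ words $b$ of length $m$ with $[b]\subset U_m$). Because $j\geq n$, the coordinate blocks $\{0,\dots,n-1\}$ and $\{j,\dots,j+m-1\}$ do not overlap, so
\[
U_n\cap\sigma^{-j}(U_m)=\bigsqcup_a\bigsqcup_b\bigsqcup_w[awb],
\]
the innermost union running over all admissible words $w$ of length $j-n$ for which $awb$ is admissible (the empty word when $j=n$). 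I would also recall that, since $\mathcal{L} 1 = 1$, one has $\sum_v e^{\phi^q(v\zeta)}=(\mathcal{L}^q 1)(\zeta)=1$ for every $\zeta\in\Sigma$ and $q\geq 0$, the sum being over admissible words $v$ of length $q$ with $v\zeta\in\Sigma$.

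The heart of the matter is a local estimate: for fixed $a,b$ as above,
\[
\mu(\{x\in[a]:\sigma^j x\in[b]\})=\sum_w\mu[awb]\leq c\,\mu[a]\,\mu[b].
\]
To prove this I would fix one point $\zeta\in[b]$ and, for each admissible connecting word $w$, use the representative $x^w:=aw\zeta$, which lies in $[awb]$. The Gibbs property~(\ref{gibbsoriginal}) gives $\mu[awb]\leq c\,e^{\phi^{j+m}(x^w)}$, while the cocycle identity splits $\phi^{j+m}(x^w)=\phi^n(x^w)+\phi^{j-n}(\sigma^n x^w)+\phi^m(\sigma^j x^w)$. Since $[x^w]_n=[a]$ and $[\sigma^j x^w]_m=[b]$, applying~(\ref{gibbsoriginal}) again gives $e^{\phi^n(x^w)}\leq c\,\mu[a]$ and $e^{\phi^m(\sigma^j x^w)}\leq c\,\mu[b]$, so that
\[
\sum_w\mu[awb]\leq c\,\mu[a]\,\mu[b]\sum_w e^{\phi^{j-n}(\sigma^n x^w)}=c\,\mu[a]\,\mu[b]\sum_w e^{\phi^{j-n}(w\zeta)}.
\]
Finally, $\sigma^n x^w=w\zeta\in\Sigma$, and the words $w$ occurring in this sum form a subset of all length-$(j-n)$ words $v$ with $v\zeta\in\Sigma$; hence the last sum is bounded by $(\mathcal{L}^{j-n}1)(\zeta)=1$, which proves the local estimate.

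Summing the local estimate over the relevant words $a$ and $b$, and using disjointness of the cylinders, yields
\[
\mu(U_n\cap\sigma^{-j}(U_m))\leq c\sum_a\mu[a]\sum_b\mu[b]=c\,\mu(U_n)\,\mu(U_m),
\]
which is the claim. The only genuine obstacle is the admissibility bookkeeping around the connecting block $w$: one must verify that restricting to words $w$ with $awb$ admissible merely discards terms from the fully normalised sum $\sum_v e^{\phi^{j-n}(v\zeta)}=1$ rather than altering it, and that each representative $x^w=aw\zeta$ does lie in $[awb]$ (which needs $a_{n-1}w_0$ and $w_{j-n-1}b_0$ to be admissible transitions, both already built into the definition of the $w$-sum). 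Using one fixed tail $\zeta\in[b]$ throughout is precisely what keeps these points transparent, and it also removes any need for a bounded-distortion estimate on $\phi^{j-n}$.
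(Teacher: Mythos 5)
The paper actually states this proposition without proof, remarking only that it follows readily from the Gibbs property (\ref{gibbsoriginal}); your argument supplies exactly the standard derivation the authors had in mind, and it is correct. The disjoint cylinder decomposition $U_n\cap\sigma^{-j}(U_m)=\bigsqcup_{a,b,w}[awb]$ using $j\geq n$, the two-sided Gibbs bound applied to $[awb]$, $[a]$, and $[b]$ together with the cocycle splitting of $\phi^{j+m}$, and the normalisation $(\mathcal{L}^{j-n}1)(\zeta)=1$ to dominate the sum over connecting words, combine to give the estimate with constant $c^3$; the admissibility bookkeeping you flag is handled correctly since the $w$-sum is a subset of the full normalised sum.
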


\begin{lemma} There exists a constant $c>0$ such that 
\begin{equation}\nonumber \|| \mathcal{L}- \mathcal{L}_n |\| \leq c(\rho\theta^{-1})^n \end{equation}
for all $n$.
\label{pert}\end{lemma}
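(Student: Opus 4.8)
The plan is to estimate $\|(\mathcal{L}-\mathcal{L}_n)w\|_h$ directly for $w$ with $\|w\|_\theta\le 1$, using the fact that $(\mathcal{L}-\mathcal{L}_n)w=\mathcal{L}(\chi_{U_n}w)$. First I would unwind the definition of the $\|\cdot\|_h$ norm: we must bound both $\|\mathcal{L}(\chi_{U_n}w)\|_1$ and $|\mathcal{L}(\chi_{U_n}w)|_h=\sup_{j\ge 0}\sup_{m\ge 1}\theta^{-m}\int_{\sigma^{-j}(U_m)}|\mathcal{L}(\chi_{U_n}w)|\,d\mu$. For the $L^1$ part, since $\mathcal{L}$ is an $L^1$-contraction (as in (\ref{L1})), $\|\mathcal{L}(\chi_{U_n}w)\|_1\le\int_{U_n}|w|\,d\mu\le\mu(U_n)\|w\|_\infty\le c\rho^n\|w\|_\theta$ by assumption (3) and the sup-norm bound (\ref{supnorm}); since $\rho<\theta$, this is already $\le c(\rho\theta^{-1})^n$ up to a constant. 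So the $L^1$ term is harmless and the real work is in the $|\cdot|_h$ seminorm.

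For the seminorm, fix $j\ge 0$ and $m\ge 1$. Using positivity of $\mathcal{L}$ and the identity $\int_{\sigma^{-j}(U_m)}\mathcal{L}(|\chi_{U_n}w|)\,d\mu=\int_{\sigma^{-(j+1)}(U_m)}\chi_{U_n}|w|\,d\mu=\int_{U_n\cap\sigma^{-(j+1)}(U_m)}|w|\,d\mu$, the quantity to control is $\theta^{-m}\int_{U_n\cap\sigma^{-(j+1)}(U_m)}|w|\,d\mu\le\theta^{-m}\mu\bigl(U_n\cap\sigma^{-(j+1)}(U_m)\bigr)\|w\|_\infty$. Now I split into two cases according to the overlap of the two "holes." When $j+1\ge n$ we are in the range where Proposition \ref{gibbs} applies, giving $\mu(U_n\cap\sigma^{-(j+1)}(U_m))\le c\,\mu(U_n)\mu(U_m)\le c\rho^n\mu(U_m)$, so $\theta^{-m}\mu(U_n\cap\sigma^{-(j+1)}(U_m))\le c\rho^n\theta^{-m}\mu(U_m)\le c\rho^n$ since $\theta>\rho$ forces $\sup_m\theta^{-m}\mu(U_m)<\infty$; combined with $\|w\|_\infty\le c\|w\|_\theta$ this is $\le c\rho^n\|w\|_\theta$, again better than needed. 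The remaining case $j+1<n$ (i.e.\ at most $n-1$ choices of $j$) is where the factor $\theta^{-n}$ genuinely enters: here I bound $U_n\cap\sigma^{-(j+1)}(U_m)\subseteq U_n$ and use $\theta^{-m}\mu(U_n)$, but this is only useful when $m$ is not too large. The clean way is: if $m\le n$ then $\theta^{-m}\mu(U_n)\le\theta^{-n}\mu(U_n)\le\theta^{-n}c\rho^n=c(\rho\theta^{-1})^n$; if $m>n$ then, since $U_n\subseteq[z]_{l_n}$ and $l_n/n>\kappa$ together with the nestedness gives $U_n\cap\sigma^{-(j+1)}(U_m)\subseteq\sigma^{-(j+1)}(U_m)$ intersected with a fixed cylinder — actually more directly, for $j+1<n\le m$ one uses $U_n\subseteq U_{n}$ and the Gibbs-type product bound once $m\ge n> j+1\ge$ the relevant threshold, or one simply notes $\theta^{-m}\mu(U_n)=\theta^{-m}\mu(U_m)\cdot\frac{\mu(U_n)}{\mu(U_m)}$ is not obviously controlled, so instead bound $\int_{U_n\cap\sigma^{-(j+1)}(U_m)}|w|d\mu\le\int_{\sigma^{-(j+1)}(U_m)}|w|d\mu\le\theta^m|w|_h\cdot$... which is circular. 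The correct split is $m\le n$ versus $m>n$ with the observation that for $m>n\ge j+2$ we still have $j+1<n<m$ so nestedness gives $U_n\cap\sigma^{-(j+1)}(U_m)\subseteq\sigma^{-(j+1)}(U_n)$...

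Let me instead organize it as: for $j+1<n$, write $\theta^{-m}\mu(U_n\cap\sigma^{-(j+1)}(U_m))\le\theta^{-m}\min(\mu(U_n),\mu(U_m))\le\theta^{-m}\sqrt{\mu(U_n)\mu(U_m)}\le\sqrt{\mu(U_n)}\cdot\theta^{-m}\sqrt{\mu(U_m)}\le c\rho^{n/2}\cdot\theta^{-m}\rho^{m/2}$, and since $\theta>\rho$ we need $\theta>\sqrt\rho$ to close this — which may not hold. The robust route, and the one I would actually write, is the two-case split $j+1\ge n$ (handled by Proposition \ref{gibbs}, contributing $c\rho^n$) and $j+1<n$ (at most $n$ values of $j$); in the latter, for each such $j$ bound $\theta^{-m}\int_{U_n\cap\sigma^{-(j+1)}(U_m)}|w|\,d\mu$ by noting $U_n\cap\sigma^{-(j+1)}(U_m)$ is contained in $U_n$, hence has measure $\le c\rho^n$, giving the term $\theta^{-m}c\rho^n\|w\|_\infty$ which for $m\le M_0:=n$ is $\le\theta^{-n}c\rho^n\|w\|_\theta=c(\rho\theta^{-1})^n\|w\|_\theta$, while for $m>n$ the set $U_n\cap\sigma^{-(j+1)}(U_m)$ with $j+1<n<m$ lies inside $\sigma^{-(j+1)}(U_m)\cap\sigma^{-(j+1)}(\sigma^{j+1}U_n)$; using nestedness $U_m\subseteq U_n$ once $m\ge n$, so $U_n\cap\sigma^{-(j+1)}(U_m)\subseteq\sigma^{-(j+1)}(U_m)$ and $\theta^{-m}\mu(\sigma^{-(j+1)}(U_m))=\theta^{-m}\mu(U_m)\le c$, but this lacks the $\rho^n$ gain — so one instead keeps the factor $\mu(U_n)$: $\theta^{-m}\int_{U_n\cap\sigma^{-(j+1)}(U_m)}|w|d\mu\le\theta^{-m}\mu(U_n)\|w\|_\infty\le\theta^{-m}c\rho^n\|w\|_\theta$, and for $m>n$ this is $\le\theta^{-m}c\rho^n\le c\rho^n$ (since $\theta<1$ makes $\theta^{-m}$ large — no good). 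I conclude the honest argument must use Proposition \ref{gibbs} in the form $\mu(U_n\cap\sigma^{-(j+1)}(U_m))\le c\mu(U_n)\mu(U_m)$ \emph{whenever $j+1\ge n$}, and for the finitely many $j+1<n$ simply absorb a factor $\theta^{-n}$: each such term is $\le\theta^{-m}\mu(U_n)\|w\|_\infty$, and summing the contribution over the $\le n$ bad values of $j$ is not how the $h$-norm works (it is a sup, not a sum), so for a single bad $j$ and single $m$: if $m\le n$, $\theta^{-m}\mu(U_n)\le\theta^{-n}c\rho^n=c(\rho/\theta)^n$; if $m>n$, use nestedness to get $U_n\cap\sigma^{-(j+1)}(U_m)\subseteq\sigma^{-(j+1)}(U_m)$ hence $\theta^{-m}\mu(U_m)\le c$ — wait, that gives $O(1)$, not $O((\rho/\theta)^n)$. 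Hmm; the resolution is that for $m>n>j+1$, Proposition \ref{gibbs} \emph{can} still be applied after noting $U_n\cap\sigma^{-(j+1)}(U_m)=\sigma^{-(j+1)}\bigl(\sigma^{j+1}(U_n)\cap U_m\bigr)$ isn't quite it either. I will therefore present the proof as: bound $\|(\mathcal{L}-\mathcal{L}_n)w\|_h$ via the two regimes $j\ge n-1$ (Proposition \ref{gibbs}) and $j<n-1$, and in the second regime use that $U_n$ is a union of cylinders of length $n$ so that $\chi_{U_n}w$ has $\|\chi_{U_n}w\|_\infty\le\|w\|_\infty\le c\|w\|_\theta$ and then $\theta^{-m}\int_{\sigma^{-j}(U_m)}\mathcal{L}(\chi_{U_n}|w|)d\mu\le\theta^{-m}\mu(U_{\max(m,?)})\cdot c\|w\|_\theta$, landing on the uniform bound $c(\rho\theta^{-1})^n\|w\|_\theta$ after taking the supremum over $j,m$. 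The \textbf{main obstacle} is precisely this bookkeeping: showing that the "small-$j$" overlap terms, where Proposition \ref{gibbs} does not directly apply, still contribute only $O((\rho\theta^{-1})^n)$ — this is forced by the hypothesis $\theta>\rho$ together with $U_n\subseteq[z]_{l_n}$, $\kappa<l_n/n$, and the nestedness, and getting the case analysis in $(j,m)$ exactly right is the delicate part; everything else is routine positivity and the $L^1$-contraction property of $\mathcal{L}$.
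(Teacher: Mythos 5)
Your overall strategy is the same as the paper's: reduce everything to bounding $\theta^{-m}\mu\bigl(U_n\cap\sigma^{-(j+1)}(U_m)\bigr)$, dispose of the $L^1$ part by the contraction property and $\mu(U_n)\le c\rho^n$, apply Proposition \ref{gibbs} when $j+1\ge n$, and use $\theta^{-m}\mu(U_n)\le\theta^{-n}\mu(U_n)\le c(\rho\theta^{-1})^n$ when $m$ is small. Those pieces are correct. But the case you repeatedly circle and never close --- $j+1<n$ with $m$ large (in the paper's indexing, $j+1<n<m+j+1$) --- is a genuine gap, and you acknowledge as much; every attempt you make there either loses the $\rho^n$ gain entirely (ending at $O(1)$) or requires $\theta>\sqrt{\rho}$, which is not assumed.

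The missing idea is to use nestedness on the \emph{first} factor rather than the second: since $j+1<n$, you have $U_n\subseteq U_{j+1}$, hence
\begin{equation}\nonumber
U_n\cap\sigma^{-(j+1)}(U_m)\subseteq U_{j+1}\cap\sigma^{-(j+1)}(U_m),
\end{equation}
and Proposition \ref{gibbs} \emph{does} apply to the right-hand side, because the shift exponent $j+1$ is at least the index $j+1$ of the first set (this is exactly the hypothesis ``$j\ge n$'' of the proposition). This gives $\mu\bigl(U_n\cap\sigma^{-(j+1)}(U_m)\bigr)\le c\,\mu(U_{j+1})\mu(U_m)\le c\rho^{m+j+1}$, and therefore
\begin{equation}\nonumber
\theta^{-m}\mu\bigl(U_n\cap\sigma^{-(j+1)}(U_m)\bigr)\le c\,(\rho\theta^{-1})^{m}\rho^{\,j+1}\le c\,(\rho\theta^{-1})^{m+j+1}\le c\,(\rho\theta^{-1})^{n},
\end{equation}
using $\theta<1$ and then $m+j+1>n$ together with $\rho\theta^{-1}<1$. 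With this case supplied, your three regimes ($j+1\ge n$; $j+1<n$ with $n\ge m+j+1$; $j+1<n<m+j+1$) cover all $(j,m)$ and the supremum defining $|\cdot|_h$ is bounded by $c(\rho\theta^{-1})^n$, completing the proof exactly as in the paper.
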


\begin{proof}Let $w\in\mathcal{B}_\theta$ be such that $\|w\|_\theta\leq 1$ then
\begin{eqnarray}\nonumber \|(\mathcal{L}- \mathcal{L}_n)w\|_1 &=& \| \mathcal{L} \chi_{U_n} w \|_1 \\
\nonumber &\leq & \| \chi_{U_n} w \|_1 \\
\label{L1pert} &\leq & \mu(U_n) \| w\|_\infty\leq c\mu(U_n)\|w\|_\theta \leq c\mu(U_n).
\end{eqnarray}
On the other hand for fixed $m,j$ we have 
\begin{equation}\nonumber \theta^{-m}\int_{\sigma^{-j}(U_m)} |\left(\mathcal{L}-\mathcal{L}_n\right) w | d\mu \leq c\theta^{-m}\mu(\sigma^{-(j+1)}(U_m)\cap U_n )\|w\|_\theta.\end{equation}
Fix positive integers $m,j$, we study three cases, namely: 
\begin{enumerate}\item $n\leq j+1$,
\item $j+1<n<m+j+1$,
\item $m+j+1\leq n$.
\end{enumerate}
	
First we suppose that $n\leq j+1$ which implies from Proposition \ref{gibbs} that

\begin{equation} \label{3case11}\theta^{-m}\mu(\sigma^{-(j+1)}(U_m)\cap U_n )\leq c\theta^{-m}\mu(U_m)\mu(U_n)\leq c\rho^n.\end{equation}

Next, we suppose that $j+1<n<m+j+1$ then observing that the nested property of $\{U_n\}_n$ gives us $\sigma^{-(j+1)}(U_m)\cap U_n\subset \sigma^{-(j+1)}(U_m)\cap U_{j+1}$, combining this with Proposition \ref{gibbs} we see that 

	\begin{eqnarray}\nonumber \theta^{-m}\mu(\sigma^{-(j+1)}(U_m)\cap U_n) &\leq & \mu(\sigma^{-(j+1)}(U_m)\cap U_{j+1}) \\
		\nonumber &\leq & c\theta^{-m}\mu(U_m)\mu(U_{j+1}) \\
		\nonumber &\leq & c\theta^{-m}\rho^{m+j+1} \\
		\nonumber &\leq & c (\theta^{-1}\rho)^{m+j+1} \\
		\label{3case12} &\leq & c(\theta^{-1}\rho)^n.\end{eqnarray}

If $n\geq m+j+1$, in which case

\begin{equation}\label{3case13} \theta^{-m}\mu(\sigma^{-(j+1)}(U_m)\cap U_n )\leq \theta^{-m}\mu(U_n)\leq \theta^{-n}\mu(U_n)\leq c(\theta^{-1}\rho)^n.\end{equation}
	
Combining equations (\ref{3case11}),(\ref{3case12}) and (\ref{3case13}) yields
\begin{equation} \nonumber |\left(\mathcal{L}-\mathcal{L}_n\right) w |_h \leq  c(\theta^{-1}\rho)^n\|w\|_\theta.
\end{equation}

Combining this with equation (\ref{L1pert}) completes the proof.	
	
	\end{proof}

We note that Lemmas \ref{weak}, \ref{LasotaYorke}, \ref{essspec} and  \ref{pert} show that the operators $\mathcal{L}_n$ satisfy the hypotheses of \cite{KelLiv99}[Theorem 1].  We now cite a specific consequence of result.

	For $\delta>0$ and $r>\theta$ let 

	\begin{equation}\nonumber V_{\delta,r}=\{z\in\mathbb{C}\,:\,|z|\leq r\,\text{or }\text{dist}(z, {\rm spec}(\mathcal{L}))\leq \delta\}.\end{equation} 

	Then by \cite{KelLiv99}[Theorem 1] there exists $N=N(\delta,r)$ such that 

	\begin{equation}S_{\delta,r}=\sup\left\{ \|(z-\mathcal{L}_n)^{-1}\|_\theta \,:\,n\geq N,\,z\in\mathbb{C}\setminus V_{\delta,r}\right\}<\infty,\label{KL1}\end{equation} where $\|(z-\mathcal{L}_n)^{-1}\|_\theta$ denotes the operator norm of $(z-\mathcal{L}_n)^{-1}:\mathcal{B}_\theta\to\mathcal{B}_\theta$.

We use may quasi-compactness of $\mathcal{L}_n$ to write

\begin{equation}\nonumber\mathcal{L}_n=\lambda_n E_n+\Psi_n,\end{equation} where $E_n$ is a projection onto the eigenspace $\{c g_n\,:\, c\in\mathbb{C}\}$ and $E_n\Psi_n=\Psi_n E_n=0$.

	\begin{prop}There exists a positive integer $N$ and constants $c>0$ and $0<q<1$ such that for all $n\geq N$ we have  
	\begin{equation}\nonumber \|\Psi_n^{k} 1\|_{\infty} \leq c q^{k}\,\,\text{for any }k\geq 1.\end{equation} \label{resolvantbound1} \end{prop}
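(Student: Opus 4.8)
The plan is to exploit the uniform resolvent bound \eqref{KL1} together with the spectral decomposition $\mathcal{L}_n=\lambda_n E_n+\Psi_n$ in order to control $\Psi_n^k$ via a Cauchy-integral (Riesz functional calculus) representation. First I would fix $\delta>0$ small and $r\in(\theta,1)$ so that $V_{\delta,r}$ consists of a disk of radius $r$ together with a small $\delta$-neighbourhood of $\mathrm{spec}(\mathcal{L})$; since $\mathcal{L}$ has a spectral gap (Lemma \ref{specgap}) with simple maximal eigenvalue $\lambda=1$, for $\delta$ small this neighbourhood splits into a tiny disk around $1$ and a piece inside $\{|z|\le r'\}$ for some $r'<1$. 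By Proposition \ref{prelconv}, $\lambda_n\to\lambda=1$, so for $n\ge N$ the eigenvalue $\lambda_n$ lies in the small disk about $1$ and, crucially, the rest of $\mathrm{spec}(\mathcal{L}_n)$ lies inside $V_{\delta,r}$ — this is exactly the content of the Keller–Liverani stability statement. Hence one can choose a fixed circle $\Gamma=\{|z|=\rho_0\}$ with $\max(\theta,r,r')<\rho_0<1$ that separates $\lambda_n$ (outside $\Gamma$) from the rest of $\mathrm{spec}(\mathcal{L}_n)$ (inside $\Gamma$), uniformly in $n\ge N$.

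Next I would write, for $k\ge1$,
\begin{equation}\nonumber \Psi_n^k=\mathcal{L}_n^k-\lambda_n^k E_n=\frac{1}{2\pi i}\int_{\Gamma} z^k (z-\mathcal{L}_n)^{-1}\,dz,\end{equation}
the integral being over the part of the spectrum enclosed by $\Gamma$, which is precisely the $\Psi_n$-part. Taking the $\|\cdot\|_\theta$ operator norm and using \eqref{KL1}, every point of $\Gamma$ lies outside $V_{\delta,r}$ (by the choice $\rho_0>r$ and $\rho_0$ bounded away from $\mathrm{spec}(\mathcal{L})$), so $\|(z-\mathcal{L}_n)^{-1}\|_\theta\le S_{\delta,r}<\infty$ uniformly, giving
\begin{equation}\nonumber \|\Psi_n^k\|_\theta\le \frac{1}{2\pi}\cdot 2\pi\rho_0\cdot \rho_0^k\cdot S_{\delta,r}=S_{\delta,r}\,\rho_0^{k+1}\le c\,q^k\end{equation}
with $q=\rho_0\in(0,1)$. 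Finally, to pass from $\|\cdot\|_\theta$ to $\|\cdot\|_\infty$ applied to the constant function $1$: we have $\|1\|_\theta=\|1\|_1=1<\infty$, so $\|\Psi_n^k 1\|_\theta\le c q^k$, and by the inequality $\|w\|_\infty\le c\|w\|_\theta$ established in the proof of Lemma \ref{norms} (equation \eqref{supnorm}), we conclude $\|\Psi_n^k 1\|_\infty\le c q^k$ as required.

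The main obstacle is the uniformity in $n$ of the spectral separation: one must be sure that for $n\ge N$ the circle $\Gamma$ genuinely encloses the entire "non-leading" part of $\mathrm{spec}(\mathcal{L}_n)$ and excludes $\lambda_n$, with no spectrum of $\mathcal{L}_n$ straddling $\Gamma$. This is not automatic from quasi-compactness alone (Lemma \ref{essspec} only bounds the \emph{essential} spectral radius by $\theta$); it requires the full strength of \cite{KelLiv99}[Theorem 1], i.e. that isolated eigenvalues of $\mathcal{L}$ outside $V_{\delta,r}$ are approximated by eigenvalues of $\mathcal{L}_n$ and no spurious eigenvalues of $\mathcal{L}_n$ appear outside $V_{\delta,r}$ — which is why Lemmas \ref{weak}, \ref{LasotaYorke}, \ref{essspec} and \ref{pert} were verified above. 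Once that separation is in hand, the contour estimate is routine. A minor secondary point is checking that $\lambda_n^k E_n$ really is the spectral projection times $\lambda_n^k$, which follows since $E_n$ is the rank-one Riesz projection associated with the simple eigenvalue $\lambda_n$ and $E_n\Psi_n=\Psi_n E_n=0$.
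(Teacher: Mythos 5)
Your proposal is correct and follows essentially the same route as the paper: the paper also represents $\Psi_n^k$ as a Cauchy integral over a fixed circle $|t|=q$ with $q\in(\theta,1)$ chosen so that $\mathrm{spec}(\mathcal{L})\setminus\{1\}\subset B(0,q)$, bounds the integrand uniformly in $n$ via the Keller--Liverani resolvent estimate \eqref{KL1} (with Proposition \ref{prelconv} guaranteeing $\lambda_n$ stays outside the circle for $n\ge N$), and then passes to the sup norm through the inequality $\|\cdot\|_\infty\le c\|\cdot\|_\theta$ from \eqref{supnorm}. Your additional remarks on the uniform spectral separation just make explicit what the paper leaves implicit in citing ``standard operator calculus.''
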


	\begin{proof}Fix $q\in (\theta,1)$ such that $\text{spec}(\mathcal{L})\setminus \{1\}\subset B(0,q)$.  Then by Proposition \ref{prelconv} there exists a positive integer $N$ such that for all $n\geq N$, we may write using standard operator calculus
	\begin{equation}\nonumber  \Psi_n^{k}=\frac{1}{2\pi i} \int_{|t|=q} t^{k}  (t-\mathcal{L}_n)^{-1} dt.\end{equation}

	Then from Lemma \ref{norms} and equation (\ref{KL1}) above we see that

	\begin{eqnarray}\nonumber \|\Psi^{k}_n 1\|_\infty &\leq&c \|\Psi^{k}_n 1 \|_\theta \\
	\nonumber &\leq & c \int_{|t|=q} |t|^k \|(t-\mathcal{L}_n)^{-1}\|_\theta dt \\
	\nonumber &\leq &c q^k.\end{eqnarray}
	\end{proof}

\begin{remark}This result (Proposition \ref{resolvantbound1}) is claimed in an article of Hirata \cite{Hir93}.  However, the proof presented in the article contains an error which we correct in this section.  In particular, this allows us to recover the exponential and Poisson return time estimates claimed in \cite{Hir93} for conformal expanding maps. \end{remark}
	
\begin{prop}There exists a constant $c>0$ such that for all $n$ \begin{equation}\nonumber 
	\|E_n 1\|_\infty \leq c.\end{equation}\label{resolvantbound2}\end{prop}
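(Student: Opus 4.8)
The plan is to decompose $\mathcal{L}_n^k 1$ using quasi-compactness and extract the projection term. Since $\mathcal{L}_n = \lambda_n E_n + \Psi_n$ with $E_n \Psi_n = \Psi_n E_n = 0$, iterating gives $\mathcal{L}_n^k 1 = \lambda_n^k E_n 1 + \Psi_n^k 1$. Hence
\[
E_n 1 = \lambda_n^{-k}\bigl(\mathcal{L}_n^k 1 - \Psi_n^k 1\bigr).
\]
First I would control $\|\mathcal{L}_n^k 1\|_\infty$: applying Lemma \ref{weak} we have $\|\mathcal{L}_n^k 1\|_h \leq \|1\|_h$, which is a fixed finite quantity independent of $n$; combined with Lemma \ref{norms}, or more directly with the estimate $\|\mathcal{L}_n^k 1\|_\infty \leq \|\mathcal{L}^k 1\|_\infty = 1$ that follows from positivity of $\mathcal{L}$ and $\mathcal{L} 1 = 1$ (since $0 \le \chi_{U_n^c} \le 1$ forces $\mathcal{L}_n^k 1 \le \mathcal{L}^k 1 = 1$ pointwise), we get $\|\mathcal{L}_n^k 1\|_\infty \leq 1$ uniformly in $n$ and $k$. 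Next, Proposition \ref{resolvantbound1} supplies a positive integer $N$, constants $c > 0$ and $q \in (0,1)$ with $\|\Psi_n^k 1\|_\infty \leq c q^k$ for all $n \geq N$ and $k \geq 1$.

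It remains to handle the factor $\lambda_n^{-k}$. By Proposition \ref{prelconv}, $\lambda_n \to \lambda = 1$, so for $n$ large enough (enlarging $N$ if necessary) we have $\lambda_n \geq 1/2$, say, and therefore $\lambda_n^{-k} \leq 2^k$ — but this bound alone is useless since it blows up in $k$. The correct move is to choose $k$ as a function of $n$, or better, to use that the estimate $E_n 1 = \lambda_n^{-k}(\mathcal{L}_n^k 1 - \Psi_n^k 1)$ holds for \emph{every} $k$, and then observe that $\lambda_n^{-k}\|\Psi_n^k 1\|_\infty \le (q/\lambda_n)^k c \to 0$ as $k \to \infty$ precisely when $\lambda_n > q$, which holds for all $n \geq N$ since $\lambda_n \to 1 > q$. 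Meanwhile $\lambda_n^{-k}\|\mathcal{L}_n^k 1\|_\infty \le \lambda_n^{-k}$, and this does not obviously converge. The cleaner approach: since $E_n$ is the spectral projection onto $\{c g_n\}$, write $E_n 1 = \bigl(\lambda_n^{-k}\mathcal{L}_n^k 1\bigr) - \bigl(\lambda_n^{-k}\Psi_n^k 1\bigr)$ and invoke Proposition \ref{collopthm}, which gives $\|\lambda_n^{-k}\mathcal{L}_n^k 1 - \nu_n(1|_{\Sigma_n}) g_n\|_\infty \to 0$ as $k \to \infty$; combined with the vanishing of $\lambda_n^{-k}\Psi_n^k 1$ (valid since $\lambda_n > q$), we conclude $E_n 1 = \nu_n(\chi_{\Sigma_n}) g_n = \nu_n(1) g_n = g_n$ (as $\nu_n$ is a probability measure supported on $\Sigma_n$). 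So the problem reduces to showing $\|g_n\|_\infty \leq c$ uniformly.

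To bound $\|g_n\|_\infty$, I would use the eigenfunction equation $\lambda_n g_n = \mathcal{L}_n g_n$ together with the uniform Lasota-Yorke inequality of Lemma \ref{LasotaYorke}. Normalising $g_n$ by $\nu_n(g_n) = 1$ (as in Proposition \ref{collopthm}, where $\int g\, d\nu = 1$), iterate: $g_n = \lambda_n^{-k}\mathcal{L}_n^k g_n$, so by Lemma \ref{LasotaYorke}, $\|g_n\|_\theta = \lambda_n^{-k}\|\mathcal{L}_n^k g_n\|_\theta \leq \lambda_n^{-k}c(\theta^k \|g_n\|_\theta + \|g_n\|_h)$. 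Choosing $k$ large so that $\lambda_n^{-k}c\theta^k \le 1/2$ (possible uniformly in $n \ge N$ since $\theta < q < \lambda_n$), we absorb the first term and obtain $\|g_n\|_\theta \leq c \lambda_n^{-k}\|g_n\|_h \le c\|g_n\|_h$. Now $\|g_n\|_h$ must be controlled; by Lemma \ref{weak}, $\|g_n\|_h = \lambda_n^{-k}\|\mathcal{L}_n^k g_n\|_h \le \lambda_n^{-k}\|g_n\|_h$, which is circular, so instead I bound $\|g_n\|_h \le \|g_n\|_1$-type quantity via the normalisation: $\|g_n\|_1 = \int g_n\, d\mu$, and since $\mu \ll \nu$ with bounded density (the eigenfunction $g$ of the unperturbed operator), $\int g_n\, d\mu$ relates to $\nu_n(g_n) = 1$ up to a uniform constant. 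The main obstacle is precisely this last uniform $L^1$ (or weak-norm) bound on $g_n$: one must prevent the perturbed eigenfunctions from developing a spike near $z$ as $n \to \infty$. I expect this follows from the Keller–Liverani stability theorem \cite{KelLiv99}[Theorem 1] already invoked, which yields convergence of the spectral projections $E_n \to E$ in the $\||\cdot\||$-norm, hence $\|E_n 1\|_h \to \|E 1\|_h = \|g\|_h$ and in particular $\sup_n \|E_n 1\|_h < \infty$; feeding this into $\|g_n\|_\theta \le c\|g_n\|_h = c\|E_n 1\|_h \le c$ and then $\|E_n 1\|_\infty = \|g_n\|_\infty \le c\|g_n\|_\theta \le c$ via (\ref{supnorm}) closes the argument.
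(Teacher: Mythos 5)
Your final argument is correct but takes a genuinely different route from the paper. The paper's proof is a verbatim analogue of the proof of Proposition \ref{resolvantbound1}: represent $E_n$ as the contour integral $E_n = \frac{1}{2\pi i}\int_{|t-1|=1-q}(t-\mathcal{L}_n)^{-1}\,dt$, use the uniform resolvent bound (\ref{KL1}) over this fixed circle to obtain $\|E_n 1\|_\theta \leq c$, and pass to $\|\cdot\|_\infty$ via the embedding (\ref{supnorm}). You instead identify $E_n 1 = g_n$ (via the probability normalisation of $\nu_n$ and $\nu_n(g_n)=1$), bootstrap $\|g_n\|_\theta \lesssim \|g_n\|_h$ from the uniform Lasota--Yorke inequality of Lemma \ref{LasotaYorke} applied to $g_n = \lambda_n^{-k}\mathcal{L}_n^k g_n$ (absorbing the $\theta^k$-term since $\theta < q < \lambda_n$ eventually), and control $\|g_n\|_h = \|E_n 1\|_h$ through the Keller--Liverani convergence $\||E_n - E|\| \to 0$; then (\ref{supnorm}) closes. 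Both proofs ultimately rest on \cite{KelLiv99}[Theorem 1], but the paper invokes the uniform resolvent estimate while you invoke the convergence of spectral projections, a parallel consequence of the same theorem that this paper already uses elsewhere (Lemma \ref{derpres}).

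Two remarks on the detours. First, your opening decomposition $E_n 1 = \lambda_n^{-k}(\mathcal{L}_n^k 1 - \Psi_n^k 1)$ actually does the job if you simply \emph{fix} $k=1$ rather than send $k\to\infty$: positivity and $\mathcal{L}1=1$ give $0 \leq \mathcal{L}_n 1 \leq 1$, Proposition \ref{resolvantbound1} gives $\|\Psi_n 1\|_\infty \leq cq$, and $\lambda_n \to 1$ bounds $\lambda_n^{-1}$ for $n\geq N$, so $\|E_n 1\|_\infty \leq \lambda_n^{-1}(1+cq)$ is uniformly bounded. You noted $\lambda_n^{-k}$ ``does not obviously converge'' and abandoned the route, but you only need $\lambda_n^{-k}$ bounded for some fixed $k$, which Proposition \ref{prelconv} supplies immediately. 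Second, the intermediate suggestion that $\int g_n\,d\mu$ ``relates to $\nu_n(g_n)=1$ up to a uniform constant'' is false as stated: $\nu_n$ is supported on $\Sigma_n$, which has $\mu$-measure zero, so $\nu_n$ and $\mu$ are mutually singular; you were right to discard that step, though not for the reason given.
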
	

\begin{proof}For $n\geq N$ write
	\begin{equation}\nonumber  E_n=\frac{1}{2\pi i} \int_{|t-1|=1-q}   (t-\mathcal{L}_n)^{-1} dt.\end{equation}

	Then from Lemma \ref{norms} and the equation (\ref{KL1}) above we see that

	\begin{eqnarray}\nonumber \|E_n 1\|_\infty &\leq&c \|E_n 1 \|_\theta \\
	\nonumber &\leq & c \int_{|t-1|=1-q}  \|(t-\mathcal{L}_n)^{-1}\|_\theta dt \\
	\nonumber &\leq &c.\end{eqnarray}
\end{proof}

\section{An asymptotic formula for $\lambda_n$}

In this section we prove the following proposition.

\begin{prop}Fix $\phi\in\mathcal{B}_\theta$, then 
\begin{equation}
\nonumber \lim_{n\rightarrow\infty} \frac{\lambda -\lambda_n}{\mu (U_n ) } = 
\begin{cases}
\lambda & \text{if }z\text{ is not periodic}.\\
 \lambda( 1-\lambda^{-p}e^{\phi^p(z)} ) & \text{if }z\text{ has prime period }p.
\end{cases}
\end{equation}\label{mainpert}
\end{prop}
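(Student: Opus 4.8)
The plan is to extract the leading-order behaviour of $\lambda - \lambda_n$ from the perturbation formula of Keller--Liverani. Since $\mathcal{L}1 = 1$ (so $\lambda = 1$) and $\mathcal{L}_n = \mathcal{L} - (\mathcal{L} - \mathcal{L}_n)$ with $(\mathcal{L} - \mathcal{L}_n)w = \mathcal{L}(\chi_{U_n} w)$, I would first write the standard first-order expansion for the leading eigenvalue of a small perturbation of an operator with a spectral gap. Using the spectral decomposition $\mathcal{L}_n = \lambda_n E_n + \Psi_n$ from Lemma~\ref{specgap} together with $\mathcal{L} = E + \Psi$ (where $E w = \int w\, d\mu$ is the rank-one projection onto constants, since $\mathcal{L}1 = 1$ and $\mathcal{L}^* \nu = \nu$), one obtains the exact identity
\begin{equation}
\nonumber \lambda - \lambda_n = \frac{\int (\mathcal{L} - \mathcal{L}_n) g_n \, d\mu}{\int g_n \, d\mu} + (\text{higher order}),
\end{equation}
and more usefully $\lambda - \lambda_n = \frac{\nu\big((\mathcal{L}-\mathcal{L}_n)(1 - \Psi_n(\lambda_n - \Psi_n)^{-1})g_n\big)}{\nu(g_n)}$ type expression; I would follow the precise form given in \cite{KelLiv99} (their ``$\Delta_n$'' eigenvalue formula). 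The point is that $\lambda - \lambda_n = \mu(U_n)\big(1 + o(1)\big)\cdot(\text{correction})$ where the correction comes entirely from the overlap of $U_n$ with its own preimages under $\sigma$.

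The key computation is to evaluate $\int (\mathcal{L} - \mathcal{L}_n) h_n \, d\mu = \int_{U_n} h_n \, d\mu$ for $h_n$ an approximate eigenfunction of $\mathcal{L}_n$ normalised near $1$. The first approximation $h_n \approx 1$ gives $\mu(U_n)$ exactly, which already yields the non-periodic answer $\lambda$ once one shows the remaining terms are $o(\mu(U_n))$. For this I would expand $g_n = E_n 1 / (\text{norm})$ using $\mathcal{L}_n^k 1 = \lambda_n^k E_n 1 + \Psi_n^k 1$ and Propositions~\ref{resolvantbound1} and~\ref{resolvantbound2} to control $\|g_n - 1\|_\infty$; the difference $g_n - 1$ should be $O(\mu(U_n))$ in the relevant averaged sense (using the Gibbs-type estimate, Proposition~\ref{gibbs}, to bound $\int_{U_n}|g_n - 1|\,d\mu$), hence contributes only at order $\mu(U_n)^2 = o(\mu(U_n))$. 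In the periodic case the subtlety is that $\int_{U_n} h_{n,k}\,d\mu$ — the measure of points in $U_n$ whose first $k$ iterates avoid $U_n$ — is \emph{not} asymptotically $\mu(U_n)$: by condition (\ref{ass4}), a point $x \in U_n$ returns to $U_n$ at time $p$ precisely when $\sigma^p(x) \in U_n$, and $\mu(U_n \cap \sigma^{-p}(U_n)) \sim e^{\phi^p(z) - pP(\phi)}\mu(U_n) = \lambda^{-p}e^{\phi^p(z)}\mu(U_n)$ by the Gibbs property and the fact that $U_n \subset [z]_{l_n}$ with $z$ periodic. So the ``escaping mass through $U_n$ in one step'' is $\mu(U_n) - \mu(U_n\cap\sigma^{-p}(U_n)) + o(\mu(U_n)) = (1 - \lambda^{-p}e^{\phi^p(z)})\mu(U_n) + o(\mu(U_n))$, and no further returns contribute at this order because $\mu(U_n \cap \sigma^{-jp}(U_n)) = O(\mu(U_n)^2)$ for $j \geq 2$ away from the periodic orbit, again by Proposition~\ref{gibbs}.

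Concretely, the steps I would carry out are: (i) write out the KL eigenvalue formula $\lambda_n = \lambda - \nu(\cdots) + O(\||\mathcal{L}-\mathcal{L}_n\||^2 \cdot \text{stuff})$ and verify, using Lemma~\ref{pert} ($\||\mathcal{L}-\mathcal{L}_n\|| \leq c(\rho\theta^{-1})^n$) together with $\mu(U_n) \geq$ (something) — actually one needs a \emph{lower} bound $\mu(U_n) \gtrsim$ const$\cdot\rho^n$-ish or rather that the quadratic error is $o(\mu(U_n))$, which follows since $\||\mathcal{L}-\mathcal{L}_n\||^2 \leq c(\rho\theta^{-1})^{2n}$ while $\mu(U_n) \geq c^{-1}e^{\phi^{l_n}(z)} \gtrsim$ a genuine geometric rate, and $\theta \in (\rho,1)$ can be chosen so that $(\rho\theta^{-1})^{2n} = o(\mu(U_n))$ — this is exactly why the weak norm $\|\cdot\|_h$ and the constraint $\rho < \theta$ were set up; (ii) reduce the main term to $\int_{U_n} g_n\,d\mu$ plus a correction involving $\Psi_n(\cdot)g_n$ restricted to $U_n$; (iii) in the correction term, recognise the sum $\sum_{k\geq 1}\int_{U_n} h_{n,k}(\,\cdots)\,d\mu$ as a telescoping series whose only non-negligible contribution (periodic case) is the $k = p$ overlap term $\mu(U_n \cap \sigma^{-p}(U_n))$; (iv) evaluate this overlap via the Gibbs property: $\mu(U_n\cap\sigma^{-p}(U_n))/\mu(U_n) \to e^{\phi^p(z)}\lambda^{-p}$, using $U_n \subset [z]_{l_n}$, continuity of $\phi$, and $\sigma^p z = z$; (v) assemble. \textbf{The main obstacle} I anticipate is step (iii)/(iv): making rigorous that among all the return-time overlap terms $\mu(U_n \cap \sigma^{-j}(U_n))$ for $j \geq 1$, only $j = p$ survives at first order in the periodic case (and none survive in the non-periodic case), and that the higher returns sum to $o(\mu(U_n))$ uniformly — this requires Proposition~\ref{gibbs} applied carefully with the bound $j \geq n$ there, matched against condition (\ref{ass4}) which is exactly what lets one replace the full return structure on $U_n$ by the single period-$p$ self-map. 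Controlling $\|g_n - 1\|$ in the $L^1(\mu)$-on-$U_n$ sense (rather than just $L^\infty$, which would only give $O(\mu(U_n))$ and hence a non-negligible contribution) is the other delicate point, handled by iterating $\mathcal{L}_n$ and using that $\mathcal{L}_n$ agrees with $\mathcal{L}$ off $U_n$.
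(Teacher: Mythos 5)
Your route is genuinely different from the paper's. You want to read $\lambda-\lambda_n$ off a first-order perturbation expansion of the leading eigenvalue (the \cite{KelLiv09}-style ``derivative formula''), with the periodic correction identified as the first-return overlap $\mu(U_n\cap\sigma^{-p}U_n)$. The paper never expands the eigenvalue. It works with $[E_n]=\int E_n(\mathcal{L}\chi_{U_n})\,dm_n$, where $m_n=\mu|_{U_n}/\mu(U_n)$, uses the exact identity $\mathcal{L}\chi_{U_n}=1-\mathcal{L}_n1$ to get $[E_n]=(1-\lambda_n)\int E_n1\,dm_n$, computes $\lim_n\int E_n1\,dm_n$ directly from the short-return structure of $U_n$ (assumption (5) in the periodic case), and then --- this is the step that replaces your expansion --- invokes Kac's theorem $\int T_n\,dm_n=1/\mu(U_n)$ together with the uniform bounds $\|\Psi_n^k1\|_\infty\le cq^k$ and $\|E_n1\|_\infty\le c$ (Propositions \ref{resolvantbound1} and \ref{resolvantbound2}) to evaluate $[E_n]/\mu(U_n)$; dividing the two limits gives $(1-\lambda_n)/\mu(U_n)$. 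Your steps (iii)--(iv), where assumption (5) forces the only non-negligible first return to occur at time $p$ and the Gibbs property evaluates that overlap as $e^{\phi^p(z)}$, coincide with what the paper does in Lemmas \ref{nonper1} and \ref{per1}.

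The gap is in your step (i), and it is exactly the step the authors route around. The first-order eigenvalue formula with remainder $o(\mu(U_n))$ does not follow from \cite{KelLiv99}: that theorem yields only H\"older continuity of the isolated eigenvalue, $|\lambda-\lambda_n|\le c\||\mathcal{L}-\mathcal{L}_n|\|^{\eta}$ for some $\eta\in(0,1)$ depending on the spectral gap, not a differentiable expansion with quadratic remainder. The sharp expansion is the content of \cite{KelLiv09} and requires verifying an additional hypothesis comparing $\||\mathcal{L}-\mathcal{L}_n|\|$ (and weak norms of $\chi_{U_n}$) against $\mu(U_n)$ --- the ``correct functional setup'' caveat from the introduction. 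Your proposed justification, that $(\rho\theta^{-1})^{2n}=o(\mu(U_n))$ for a suitable $\theta$, cannot be checked from the standing assumptions: (1)--(5) give only the upper bound $\mu(U_n)\le c\rho^n$ and no lower bound, and in any case the error you must beat is of order $\||\mathcal{L}-\mathcal{L}_n|\|^{\eta}$ with $\eta<1$, not $\||\mathcal{L}-\mathcal{L}_n|\|^{2}$. A smaller but real misstatement: the raw overlaps $\mu(U_n\cap\sigma^{-jp}U_n)$ for $j\ge2$ are not $O(\mu(U_n)^2)$; along the periodic orbit they are bounded below by roughly $e^{j\phi^p(z)}\mu(U_n)$. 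The correction must therefore be organised by \emph{first} returns (which the factor $h_{n,k}$ in $\mathcal{L}_n^k$, or Kac's theorem, does automatically), not by summing the overlaps $\mu(U_n\cap\sigma^{-k}U_n)$ over $k$.
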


We prove the proposition in the case that $\mathcal{L}$ is normalised, i.e. $\mathcal{L}1=1$, the more general statement above can be deduced by scaling the operator.  

Let $m_n$ denote the restriction of $\mu$ to $I_n$, i.e.

\begin{equation}\nonumber m_n=\frac{\mu|_{U_n}}{\mu(U_n)}.\end{equation}
	
The following four lemmas were motivated by corresponding results in \cite{Hir93}.

\begin{lemma}If $z$ is non-periodic then 
	\begin{equation}\nonumber \lim_{n\rightarrow\infty} \frac{\int E_n(\mathcal{L} \chi_{U_n})dm_n}{1-\lambda_n}=\lim_{n\rightarrow\infty}\int E_n1 d m_n =1.\end{equation}\label{nonper1}\end{lemma}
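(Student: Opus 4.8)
The plan is to establish the two equalities separately, beginning with the rightmost limit $\lim_n \int E_n 1\, dm_n = 1$. Since $\mathcal{L}_n \to \mathcal{L}$ in the asymmetric norm $\|\|\cdot\|\|$ (Lemma \ref{pert}) and the spectral projections are stable (the Keller--Liverani machinery, equation (\ref{KL1})), the projections $E_n$ converge to the unperturbed projection $E$, which is $E w = g\int w\, d\nu = \int w\, d\mu$ in the normalised case where $\mathcal{L}1 = 1$, $g \equiv 1$. Hence $E_n 1 \to 1$ in an appropriate sense; combined with the uniform bound $\|E_n 1\|_\infty \leq c$ of Proposition \ref{resolvantbound2}, and the fact that $m_n$ is a probability measure supported on $U_n$ with $U_n \to \{z\}$, I would argue that $\int E_n 1\, dm_n \to E1(z) = 1$. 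The cleanest route is to write $\int E_n 1\, dm_n = \int (E_n 1 - 1)\, dm_n + 1$ and bound the first term using $\|E_n - E\|$ in the mixed operator norm together with Lemma \ref{norms}, so that $\|E_n 1 - 1\|_\infty = \|(E_n - E)1\|_\infty \leq c\,\||E_n - E|\|^{?}$ — more precisely one extracts from (\ref{KL1}) a bound $\|(E_n - E)1\|_\theta \to 0$, hence $\|E_n1 - 1\|_\infty \to 0$ by Lemma \ref{norms}, and integrating against the probability measure $m_n$ kills it.

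For the left equality, the idea is that $\mathcal{L}\chi_{U_n}$ is ``almost'' $(1-\lambda_n)$ times something whose $E_n$-average against $m_n$ is close to $\int E_n 1\, dm_n$. Concretely, from $\mathcal{L}_n g_n = \lambda_n g_n$ and $\mathcal{L}_n = \mathcal{L} - \mathcal{L}\chi_{U_n}(\cdot)$ one gets the identity $\mathcal{L}\chi_{U_n} g_n = (\mathcal{L} - \mathcal{L}_n)g_n = \mathcal{L}g_n - \lambda_n g_n$, and applying $E_n$ (using $E_n \mathcal{L} = E_n(\mathcal{L}_n + (\mathcal{L}-\mathcal{L}_n)) $ and $E_n\mathcal{L}_n = \lambda_n E_n$) yields $E_n(\mathcal{L}\chi_{U_n} g_n) = \lambda_n E_n g_n - \lambda_n E_n g_n + E_n((\mathcal{L}-\mathcal{L}_n)g_n) \cdots$; I would instead work directly with $1$ in place of $g_n$ and control the error $g_n - 1$ via the uniform spectral estimates. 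The key point exploited by non-periodicity is that $\chi_{U_n}$ and its pushforward decorrelate: $\sigma^{-1}(U_n) \cap U_n = \emptyset$ for large $n$ (since $U_n \subset [z]_{l_n}$ with $l_n/n > \kappa$ and $z$ is not periodic, so $z$ and $\sigma z$ lie in distinct length-$l_n$ cylinders eventually), which forces the ``correction term'' $\int E_n(\mathcal{L}\chi_{U_n})\,dm_n$ to match $(1-\lambda_n)\int E_n 1\, dm_n$ up to $o(1-\lambda_n)$.

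The main obstacle I anticipate is the quantitative comparison $\int E_n(\mathcal{L}\chi_{U_n})\,dm_n = (1-\lambda_n)\int E_n 1\,dm_n + o(1-\lambda_n)$: one must show the error is small \emph{relative to} $1 - \lambda_n$, not merely small, and by Proposition \ref{mainpert} (which this lemma feeds into) $1 - \lambda_n$ is of order $\mu(U_n) \to 0$. So I need estimates of the form ``error $= o(\mu(U_n))$'', which requires combining the Gibbs decorrelation bound of Proposition \ref{gibbs} (to handle terms like $\mu(U_n \cap \sigma^{-j}U_n) \leq c\mu(U_n)^2$) with the exponential decay $\|\Psi_n^k 1\|_\infty \leq cq^k$ of Proposition \ref{resolvantbound1}, summing a geometric series in $k$ and checking the resulting bound beats $\mu(U_n)$. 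I would handle this by expanding $E_n = I - \Psi_n - \Psi_n^2 - \cdots$ acting after a single $\mathcal{L}$, writing $\int E_n(\mathcal{L}\chi_{U_n})\,dm_n$ as a series $\sum_{k\geq 0}\int \Psi_n^k \mathcal{L}\chi_{U_n}\,dm_n$ (with signs), and identifying the $k=0$ term $\int \mathcal{L}\chi_{U_n}\,dm_n = \int \chi_{U_n}\,d(\mathcal{L}^* m_n)$; the non-periodicity then ensures the ``return'' contributions are genuinely second order in $\mu(U_n)$.
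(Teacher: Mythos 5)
Your plan founders on the step that carries all the content of the lemma, and spends its effort on the step that is free. The left-hand equality is an exact algebraic identity, not an asymptotic one: in the normalised case $\mathcal{L}1=1$ one has $\mathcal{L}\chi_{U_n}=\mathcal{L}1-\mathcal{L}(\chi_{U_n^c})=1-\mathcal{L}_n1$, and since $E_n\mathcal{L}_n=\lambda_nE_n$ this gives $E_n(\mathcal{L}\chi_{U_n})=(1-\lambda_n)E_n1$ pointwise, hence $\int E_n(\mathcal{L}\chi_{U_n})\,dm_n=(1-\lambda_n)\int E_n1\,dm_n$ with no error term whatsoever. The ``main obstacle'' you anticipate --- showing the correction is $o(1-\lambda_n)$ via Proposition \ref{gibbs} and a geometric series in $\Psi_n^k$ --- does not exist, and no decorrelation estimate is needed for this equality.

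The genuine gap is in your argument that $\int E_n1\,dm_n\to1$. First, the bound $\|(E_n-E)1\|_\theta\to0$ is not extractable from (\ref{KL1}): the Keller--Liverani theory yields convergence of the spectral projections only in the asymmetric norm $\||\cdot|\|$ (strong norm on the domain, weak norm $\|\cdot\|_h$ on the range); strong-norm convergence fails for singular perturbations, which is precisely why the weak norm was introduced. Second, even granting $\|(E_n-E)1\|_h\to0$, this cannot control $\int|E_n1-1|\,dm_n=\mu(U_n)^{-1}\int_{U_n}|E_n1-1|\,d\mu$, because the normalising factor $\mu(U_n)^{-1}$ blows up at least as fast as the weak norm decays; $m_n$ is concentrated exactly where $E_n1$ may differ appreciably from $1$. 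The decisive sanity check is that your argument for this limit nowhere uses non-periodicity of $z$, yet Lemma \ref{per1} shows the same limit equals $1-e^{\phi^p(z)}\neq1$ when $z$ is periodic; so any proof that does not invoke non-periodicity must be wrong. The paper uses non-periodicity exactly here (not, as you do, in the other equality): for each $k$ there is $N_k$ with $U_n\cap\sigma^{-j}(U_n)=\emptyset$ for $1\leq j\leq k$ and $n>N_k$, whence $\mathcal{L}_n^k1=\mathcal{L}^k1=1$ on $U_n$ and $\int\mathcal{L}_n^k1\,dm_n=1$; then the decomposition $\mathcal{L}_n^k=\lambda_n^kE_n+\Psi_n^k$ together with Propositions \ref{resolvantbound1} and \ref{resolvantbound2} gives
\begin{equation}\nonumber
\left|1-\int E_n1\,dm_n\right|\leq c\left(|1-\lambda_n^k|+q^k\right),
\end{equation}
and one lets $n\to\infty$ (using $\lambda_n\to1$) and then $k\to\infty$. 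Your observation that $U_n\subset[z]_{l_n}$ forces $U_n\cap\sigma^{-j}(U_n)=\emptyset$ for large $n$ is the right ingredient; it just needs to be deployed on the limit $\int E_n1\,dm_n\to1$ rather than on the identity relating the two integrals.
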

		
\begin{proof}For simplicity we put
\begin{equation}\nonumber [E_n]=\int E_n(\mathcal{L}\chi_{U_n})d m_n. \end{equation}	
	
Then, by using $\mathcal{L}\chi_{U_n}=1-\mathcal{L}_n1$,

\begin{equation}\nonumber [E_n]=(1-\lambda_n)\int E_n 1 d m_n.\end{equation}	

As $z$ is non periodic, it follows from the fact that a countable intersection of nested compact sets is non-empty that for any integer $k\geq 1$, there exists $N_k$ such that $U_{N_k}\cap\sigma^{-j}(U_{N_k})=\emptyset$ for $j=1,2,\ldots,k$.
		
Then for any $x\in \sigma^{-j} U_{N_k}$, $1\leq j \leq k$ we have that  $x\not\in U_{N_k}$.  So for any $n>N_k$ and any $x\in \sigma^{-k}U_{n}$ we see that

\begin{equation}\nonumber \chi_{U_n^c}(x) \chi_{U_n^c}(\sigma(x)) \cdots \chi_{U_n^c}(\sigma^{k-1}(x))=1.\end{equation}

So for $n>N_k$ we see that

\begin{equation}\nonumber \chi_{U_n}(x)\mathcal{L}_n^k 1(x) =\chi_{U_n}(x)\mathcal{L}^k 1(x)=\chi_{U_n}(x).\end{equation}

And so $\int\mathcal{L}_n1 d m_n=1$ for all $n>N_k$.

We now use the decomposition $\mathcal{L}_n^k=\lambda_n^k E_n+\Psi_n^k$ to see that for any $k$ and $n>N_k$ we have

\begin{eqnarray}\nonumber \left|1-\int E_n1 d m_n\right| &=& \left|(\lambda_n^k-1)\int E_n 1 d m_n + \int \Psi_n^k 1 d m_n\right| \\
	\nonumber &\leq & \left|1-\lambda_n^k\right|\|E_n 1\|_\infty +\|\Psi_n^k 1\|_\infty \\
	\nonumber &\leq & c(|1-\lambda_n^k| + q^k).\end{eqnarray}
Where Propositions \ref{resolvantbound1} and \ref{resolvantbound2} were used in the final line.  This completes the proof.		
\end{proof}

\begin{lemma}If $z$ is non-periodic then 
\begin{equation}\nonumber \lim_{n\rightarrow\infty} \frac{\int E_n (\mathcal{L} \chi_{U_n})d m_n}{\mu(U_n)}=1.\end{equation}\label{nonper2}\end{lemma}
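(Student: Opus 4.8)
The plan is to reduce the statement to the asymptotic $\lim_{n\to\infty}(1-\lambda_n)/\mu(U_n)=1$ and then invoke Lemma~\ref{nonper1}. Indeed, writing
\[
\frac{\int E_n(\mathcal{L}\chi_{U_n})\,dm_n}{\mu(U_n)}
=\frac{\int E_n(\mathcal{L}\chi_{U_n})\,dm_n}{1-\lambda_n}\cdot\frac{1-\lambda_n}{\mu(U_n)},
\]
the first factor tends to $1$ by Lemma~\ref{nonper1}, so everything hinges on the second one. Note that non-periodicity of $z$ is used only through Lemma~\ref{nonper1} (where it guarantees $\int E_n 1\,dm_n\to1$); the rest of the argument below does not see it.

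To get at $1-\lambda_n$ I would decompose $\Sigma$ according to the first entry time into the hole. For $k\ge0$ put $B_k=\{x\in\Sigma:\sigma^j x\notin U_n\ (0\le j\le k-1),\ \sigma^k x\in U_n\}$. These sets are pairwise disjoint with $\bigcup_{k\ge0}B_k=\Sigma\setminus\Sigma_n$, and since $\Sigma_n$ is closed and $\sigma$-invariant and disjoint from the nonempty open set $U_n$ (which has positive measure because $\mu$ has full support), ergodicity of $\mu$ forces $\mu(\Sigma_n)=0$; hence $\sum_{k\ge0}\mu(B_k)=1$. On the other hand, using $\mathcal{L}_n^k 1=\mathcal{L}^k h_{n,k}$ together with the transfer-operator duality $\int v\,(\mathcal{L}^k w)\,d\mu=\int(v\circ\sigma^k)\,w\,d\mu$ (valid since $\mathcal{L}$ is normalised and $\mu$ is its eigenmeasure), one checks
\[
\mu(B_k)=\int\chi_{U_n}\,\mathcal{L}_n^k 1\,d\mu=\mu(U_n)\int \mathcal{L}_n^k 1\,dm_n .
\]

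Now feed in the spectral decomposition $\mathcal{L}_n^k=\lambda_n^k E_n+\Psi_n^k$, which gives $\int\mathcal{L}_n^k 1\,dm_n=\lambda_n^k e_n+r_k$ with $e_n:=\int E_n 1\,dm_n$ and $r_k:=\int\Psi_n^k 1\,dm_n$. By Proposition~\ref{resolvantbound1}, $|r_k|\le cq^k$ for $k\ge1$ uniformly in $n\ge N$, while $r_0=1-e_n$ because $\Psi_n^0=I-E_n$; hence $\sum_{k\ge0}|r_k|$ is bounded independently of $n$. Summing the identity for $\mu(B_k)$ over $k$ and rearranging the (absolutely convergent) series yields
\[
1=\mu(U_n)\Bigl(\frac{e_n}{1-\lambda_n}+\sum_{k\ge0}r_k\Bigr),
\qquad\text{equivalently}\qquad
\frac{1-\lambda_n}{\mu(U_n)}=\frac{e_n}{\,1-\mu(U_n)\sum_{k\ge0}r_k\,}.
\]
Since $e_n\to1$ by Lemma~\ref{nonper1}, $\mu(U_n)\to0$ by assumption~(3), and $\sum_k r_k$ stays bounded, the right-hand side tends to $1$; combining with the factorisation from the first paragraph completes the proof.

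The only delicate point, beyond Lemma~\ref{nonper1}, is the uniform geometric bound $\|\Psi_n^k 1\|_\infty\le cq^k$ from Proposition~\ref{resolvantbound1}: without it the ``error'' term $\mu(U_n)\sum_k r_k$ could not be shown to vanish. Everything else is bookkeeping — the transfer-operator duality identity, the interchange of summation in $\sum_k\mu(B_k)=\mu(U_n)\sum_k(\lambda_n^k e_n+r_k)$, and the separate treatment of the $k=0$ term (which contributes $1-e_n$ and is precisely where Lemma~\ref{nonper1} re-enters) — so I expect no serious obstacle, only care in keeping the estimates uniform in $n$ for $n$ large.
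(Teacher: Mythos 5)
Your argument is correct, and it reaches the conclusion by a route that is a genuine (if close) variant of the paper's. The paper works with the first \emph{return} time $T_n$ of points of $U_n$ to $U_n$ and invokes Kac's theorem, $\int T_n\,dm_n=1/\mu(U_n)$; expanding $\sum_i i\,m_n(T_n=i)$ via $m_n(T_n=i)=\int\mathcal{L}_n^{i-1}(\mathcal{L}\chi_{U_n})\,dm_n$ and the decomposition $\mathcal{L}_n^k=\lambda_n^kE_n+\Psi_n^k$ produces the factor $\frac{1}{(1-\lambda_n)^2}$, so the limit appears as the \emph{square} of the ratio from Lemma~\ref{nonper1} plus a controlled remainder. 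You instead partition all of $\Sigma$ by the first \emph{entry} time into $U_n$, use $\mu(\Sigma_n)=0$ (ergodicity plus $\mu(U_n)>0$) to get $\sum_k\mu(B_k)=1$, and convert each $\mu(B_k)$ into $\mu(U_n)\int\mathcal{L}_n^k1\,dm_n$ by the duality $\int\chi_{U_n}\,\mathcal{L}^k w\,d\mu=\int(\chi_{U_n}\circ\sigma^k)\,w\,d\mu$. Summing the geometric series once (rather than the derivative series $\sum_i i\lambda_n^{i-1}$) gives the linear relation $1=\mu(U_n)\bigl(e_n/(1-\lambda_n)+\sum_k r_k\bigr)$, so you never need Kac's theorem and the bookkeeping is slightly lighter. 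Both proofs rest on exactly the same two pillars: Lemma~\ref{nonper1} for $e_n\to1$ and the uniform bound $\|\Psi_n^k1\|_\infty\le cq^k$ of Proposition~\ref{resolvantbound1} to show the error term $\mu(U_n)\sum_k r_k$ vanishes. The only points you pass over quickly --- that $\lambda_n<1$ for large $n$ (needed to sum $\sum_k\lambda_n^k$) and the handling of the $k=0$ term --- are also implicit in the paper's computation, so I do not regard them as gaps.
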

	
	\begin{proof}
	We let $T_n(x)$ denote the first return time (assuming it exists) for $x\in U_n$, i.e.,

	\begin{equation}\nonumber T_n(x)=\inf\{i\in\mathbb{N}\,:\,\sigma^i(x)\in U_n\}\end{equation} then

\begin{eqnarray}\nonumber \int T_n d m_n & = & \sum_{i=1}^{\infty} i  m_n(T_n=i) \\
\nonumber & = &  m_n(T_n=1)+\sum_{i=2}^\infty i \int \mathcal{L}_n^{i-1}(\mathcal{L}\chi_{U_n}) d m_n \\
\nonumber & = &  m_n(T_n=1)+\int E_n(\mathcal{L}\chi_{U_n})d m_n \sum_{i=2}^\infty i \lambda_n^{i-1} + \sum_{i=2}^{\infty} i \int \Psi_n^{i-1}(\mathcal{L}_n\chi_{U_n}) d m_n \\
\nonumber &=&  m_n(T_n=1)+\int E_n(\mathcal{L}\chi_{U_n})d m_n \left(\frac{1}{(1-\lambda_n)^2}-1\right) + \sum_{i=2}^{\infty} \int \Psi_n^{i-1}1 d m_n \\
\end{eqnarray}

But by Kac's theorem $\int T_n d m_n=\frac{1}{\mu(U_n)}$ and thus 
\begin{eqnarray}\nonumber \frac{\int E_n (\mathcal{L} \chi_{U_n})d m_n}{\mu(U_n)} &=& \underbrace{\left(\frac{\int E_n (\mathcal{L} \chi_{U_n})d\mu_n}{1-\lambda_n}\right)^2}_{\to 1} \\
	\nonumber &+& \underbrace{\int E_n (\mathcal{L} \chi_{U_n})d m_n}_{\to 0}\underbrace{\left( m_n(T_n=1)-\int E_n(\mathcal{L}\chi_{U_n}) d m_n+\sum_{k=1}^\infty \Psi_n^k 1 d m_n\right)}_{=O(1)}.\end{eqnarray}
	
This completes the proof.\end{proof}
	
\begin{lemma}If $z$ has prime period $p$, then
	\begin{equation}\nonumber  \lim_{n\rightarrow\infty} \frac{\int E_n(\mathcal{L} \chi_{U_n})d m_n}{1-\lambda_n}=\lim_{n\rightarrow\infty}\int E_n1 d m_n =1-e^{\phi^p(z)}.\end{equation}\label{per1}\end{lemma}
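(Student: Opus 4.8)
The plan is to follow the scheme of Lemma~\ref{nonper1}, with the identity ``$\int\mathcal{L}_n^k1\,dm_n=1$ for $n$ large'' replaced by a periodic analogue. As in Lemma~\ref{nonper1}, the first equality is formal: since $\mathcal{L}\chi_{U_n}=\mathcal{L}1-\mathcal{L}\chi_{U_n^c}=1-\mathcal{L}_n1$, and $E_n\mathcal{L}_n=\lambda_nE_n$ with $E_n^2=E_n$, one has $\int E_n(\mathcal{L}\chi_{U_n})\,dm_n=(1-\lambda_n)\int E_n1\,dm_n$, so it suffices to prove $\lim_n\int E_n1\,dm_n=1-e^{\phi^p(z)}$. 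Writing $\mathcal{L}_n^k=\lambda_n^kE_n+\Psi_n^k$ gives $\int E_n1\,dm_n=\lambda_n^{-k}\left(\int\mathcal{L}_n^k1\,dm_n-\int\Psi_n^k1\,dm_n\right)$; since $0\le\mathcal{L}_n^k1\le\mathcal{L}^k1=1$, $\|\Psi_n^k1\|_\infty\le cq^k$ (Proposition~\ref{resolvantbound1}), $\|E_n1\|_\infty\le c$ (Proposition~\ref{resolvantbound2}) and $\lambda_n\to1$ (Proposition~\ref{prelconv}), fixing $k$ and letting $n\to\infty$ gives $\limsup_n\left|\int E_n1\,dm_n-\int\mathcal{L}_n^k1\,dm_n\right|\le cq^k$. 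Hence it is enough to show that for each fixed $k\ge p$, $\int\mathcal{L}_n^k1\,dm_n\to1-e^{\phi^p(z)}$ as $n\to\infty$.

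The heart of the matter is a combinatorial simplification of $\int\mathcal{L}_n^k1\,dm_n$ valid once $n$ is large compared with $k$. By the telescoping relation $\mathcal{L}_n^k1=1-\sum_{j=0}^{k-1}\mathcal{L}_n^j(\mathcal{L}\chi_{U_n})$ and the identity $\int\mathcal{L}_n^j(\mathcal{L}\chi_{U_n})\,dm_n=m_n(T_n=j+1)$ used in Lemma~\ref{nonper2}, one has $\int\mathcal{L}_n^k1\,dm_n=m_n\{x\in U_n:\sigma^ix\notin U_n\text{ for }1\le i\le k\}$. I claim that for $n$ large, if $x\in U_n$ and $\sigma^ix\in U_n$ for some $1\le i\le k$, then $p\mid i$ and moreover $\sigma^px\in U_n$. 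The divisibility holds because $U_n\subset[z]_{l_n}$ with $l_n\to\infty$, so $x,\sigma^ix\in[z]_{l_n}$ forces $z_{i+m}=z_m$ for $0\le m\le p-1$, i.e.\ $\sigma^iz=z$; the second assertion follows by iterating assumption~(\ref{ass4}): induction shows $\sigma^{-mp}(U_n)\cap[z]_{mp}\subset U_n$, and hence $\sigma^{jp}x\in U_n$ with $x\in U_n$ forces $\sigma^{lp}x\in U_n$ for all $0\le l\le j$. Consequently $\{x\in U_n:\sigma^ix\notin U_n,\ 1\le i\le k\}=\{x\in U_n:\sigma^px\notin U_n\}$, so that $\int\mathcal{L}_n^k1\,dm_n=1-\mu(U_n\cap\sigma^{-p}(U_n))/\mu(U_n)$ for all $k\ge p$ and all $n$ large.

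It remains to evaluate $\mu(U_n\cap\sigma^{-p}(U_n))/\mu(U_n)$. Using assumption~(\ref{ass4}) together with $U_n\subset[z]_p$ we get $U_n\cap\sigma^{-p}(U_n)=[z]_p\cap\sigma^{-p}(U_n)$, and the adjoint relation $\int g\cdot\mathcal{L}^ph\,d\mu=\int(g\circ\sigma^p)\,h\,d\mu$ gives $\mu([z]_p\cap\sigma^{-p}(U_n))=\int\chi_{U_n}\,\mathcal{L}^p(\chi_{[z]_p})\,d\mu=\int_{U_n}e^{\phi^p(z_0z_1\cdots z_{p-1}y)}\,d\mu(y)$, since for $y\in U_n$ the unique $\sigma^p$-preimage of $y$ lying in $[z]_p$ is the concatenation $z_0z_1\cdots z_{p-1}y$ (admissibility holding because $z_p=z_0$ and $y_0=z_0$). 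As $y\in U_n\subset[z]_{l_n}$ forces $d_\theta(z_0z_1\cdots z_{p-1}y,z)\le\theta^{p+l_n}$, H\"older continuity of $\phi$ yields $\phi^p(z_0z_1\cdots z_{p-1}y)\to\phi^p(z)$ uniformly over $y\in U_n$, whence $\mu(U_n\cap\sigma^{-p}(U_n))/\mu(U_n)\to e^{\phi^p(z)}$. Together with the previous paragraph this proves the lemma. The step I expect to require the most care is the combinatorial one: making the iteration of assumption~(\ref{ass4}) precise and tracking how large $n$ must be in terms of $k$, so that taking $n\to\infty$ before $k\to\infty$ is legitimate; the remaining estimates are routine.
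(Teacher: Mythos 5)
Your proposal is correct and follows essentially the same route as the paper: the same reduction via the spectral decomposition $\mathcal{L}_n^k=\lambda_n^kE_n+\Psi_n^k$ and Propositions \ref{resolvantbound1}--\ref{resolvantbound2} to the evaluation of $\int\mathcal{L}_n^k1\,dm_n$ for fixed $k\geq p$, the same use of assumption (\ref{ass4}) together with the prime periodicity of $z$ to reduce the escaping mass to $\int_{U_n}e^{\phi^p(z_0\cdots z_{p-1}y)}\,dm_n(y)$, and the same H\"older estimate at the end. The only difference is presentational: you organise the middle combinatorial step through first-return times and forward orbits (and your iteration of assumption (\ref{ass4}) is spelled out more carefully than the paper's terse chain of equalities), whereas the paper works directly with the preimage sums defining $\mathcal{L}_n^k$; the underlying computation is the same.
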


\begin{proof}Fix a large positive integer $m$ and set $k=pm$. We have that for large $n$ that 

\begin{eqnarray}\nonumber \chi_{U_n}(x)-\chi_{U_n}(x)\mathcal{L}_{n}^k 1(x) &=& \chi_{U_n}(x)\sum_{\sigma^k (y)=x} \chi_{\cup_{j=0}^{k-1}\sigma^{-j}(U_n)}(y)e^{\phi^k(y)} \\
\nonumber &=&\chi_{U_n}(x)\sum_{\sigma^k(y)=x} \chi_{\sigma^{k-p}[z_0 z_1\cdots z_{p-1}]}(y) e^{\phi^k(y)} \\
\nonumber &=& \chi_{U_n}(x)\sum_{\sigma^{pm}(y)=x}e^{\phi^{pm}(y)} \chi_{ \sigma^{-p(m-1)}[z_0 z_1 \cdots z_{p-1}]} (y) \\
\nonumber & = & \chi_{U_n}(x) \mathcal{L}^{pm}(\chi_{[z_0 z_1 \cdots z_{p-1}]}\circ\sigma^{p(m-1)} )(x)  \\
\nonumber &=& \chi_{U_n}(x) \mathcal{L}^{p}(\chi_{[z_0 z_1 \cdots z_{p-1}]}  ) (x)\end{eqnarray} where assumption (5) on the family $\{U_n\}_n$ was utilised in the second line.  Hence

\begin{eqnarray} \nonumber \left|1-e^{\phi^p(z)} - \int \mathcal{L}_n^k 1 d m_n\right| &\leq & \left|\int \mathcal{L}^{p}(\chi_{[z_0 z_1 \cdots z_{p-1}]})(x)-e^{\phi^p(z)}  d m_n(x)\right| \\
	\nonumber &\leq & \sup_{y\in [z]_{l_n+p}} |\phi^p(y)-\phi^p(z)| \\
	\nonumber &\leq &  \frac{|\phi |_{\theta,\infty} }{1-\theta} {\rm diam}(U_n) \to 0 \,(n\to\infty),\end{eqnarray}

where $|\cdot|_{\theta,\infty}$ denotes the usual H\"older semi-norm.

Hence any $k=pm$,

\begin{equation}\nonumber \lim_{n\rightarrow\infty} \int \mathcal{L}_n^k 1 d m_n = 1-e^{\phi^p(z)}.\end{equation}
	
On the other hand, by lemma \ref{nonper1}, for large $n$

\begin{equation} \nonumber \left| \int \mathcal{L}_n^k 1 d m_n - \lambda_n^k \int E_n d m_n \right| = \left| \int \Psi_n^k 1 d m_n \right| \leq \|\Psi_n^k \|_\infty \leq c q^k.\end{equation} 
	
We fixed $k=pm$ and $\lambda_n\to 1$ as $n\to\infty$, hence

\begin{equation}\nonumber \lim_{n\to\infty}\int E_n1 d m_n = 1-e^{\phi^p(z)}.\end{equation}
	\end{proof}

\begin{lemma}If $z$ has prime period $p$, then
		\begin{equation}\nonumber \lim_{n\rightarrow\infty} \frac{\int E_n(\mathcal{L} \chi_{U_n})d m_n}{\mu(U_n)}=(1-e^{\phi^p(z)})^2.\end{equation}\label{per2}\end{lemma}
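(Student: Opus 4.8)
The plan is to imitate the proof of Lemma~\ref{nonper2} almost verbatim, the only genuine change being that at the last step Lemma~\ref{per1} is invoked in place of Lemma~\ref{nonper1}. Throughout I work in the normalised case $\mathcal{L}1=1$, so that $\mathcal{L}^{*}\mu=\mu$. Write $[E_n]=\int E_n(\mathcal{L}\chi_{U_n})\,dm_n$ and let $T_n$ denote the first return time to $U_n$, which is finite $m_n$-almost everywhere by Poincar\'e recurrence, since $\mu$ is ergodic.

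First I would record the identity, valid for $i\ge 2$,
\[
m_n(T_n=i)=\int \mathcal{L}_n^{i-1}(\mathcal{L}\chi_{U_n})\,dm_n,
\]
which follows from $\mathcal{L}^{*}\mu=\mu$, the explicit form of the iterated perturbed operator, and $\mathcal{L}\chi_{U_n}=1-\mathcal{L}_n1$. Summing $\int T_n\,dm_n=\sum_{i\ge 1}i\,m_n(T_n=i)$, splitting off the $i=1$ term, and inserting the quasi-compact decomposition $\mathcal{L}_n^{i-1}=\lambda_n^{i-1}E_n+\Psi_n^{i-1}$ together with $\sum_{i\ge 2}i\lambda_n^{i-1}=(1-\lambda_n)^{-2}-1$ gives
\[
\int T_n\,dm_n=m_n(T_n=1)+[E_n]\Bigl(\frac{1}{(1-\lambda_n)^{2}}-1\Bigr)+\sum_{i\ge 2}i\int \Psi_n^{i-1}(\mathcal{L}\chi_{U_n})\,dm_n .
\]
Then I would check that the last sum is $O(1)$ uniformly for $n\ge N$: since $\Psi_nE_n=0$ one has $\Psi_n^{i-1}(\mathcal{L}\chi_{U_n})=\Psi_n^{i-1}1-\Psi_n^{i}1$, so after an Abel summation the sum is a fixed linear combination of the quantities $\int\Psi_n^{j}1\,dm_n$, each bounded by $\|\Psi_n^{j}1\|_\infty\le cq^{j}$ by Proposition~\ref{resolvantbound1}; and $m_n(T_n=1)\le1$ trivially.

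By Kac's theorem $\int T_n\,dm_n=\mu(U_n)^{-1}$, so multiplying the displayed identity by $[E_n]$ and rearranging yields
\[
\frac{[E_n]}{\mu(U_n)}=\Bigl(\frac{[E_n]}{1-\lambda_n}\Bigr)^{2}+[E_n]\cdot O(1).
\]
Now $[E_n]=(1-\lambda_n)\int E_n1\,dm_n$ (exactly as in the proof of Lemma~\ref{nonper1}, using $E_n\mathcal{L}_n=\lambda_nE_n$); by Proposition~\ref{resolvantbound2} the factor $\int E_n1\,dm_n$ is bounded, while $1-\lambda_n\to0$ by Proposition~\ref{prelconv}, so $[E_n]\to0$ and the error term disappears in the limit. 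Since Lemma~\ref{per1} gives $[E_n]/(1-\lambda_n)\to 1-e^{\phi^p(z)}$, the claim $[E_n]/\mu(U_n)\to(1-e^{\phi^p(z)})^{2}$ follows.

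I expect no serious obstacle: all the analytic input — the uniform spectral gap and resolvent bounds (Propositions~\ref{resolvantbound1} and~\ref{resolvantbound2}), the convergence $\lambda_n\to\lambda$ (Proposition~\ref{prelconv}), and the identification of the limit in Lemma~\ref{per1} — is already in hand, and the argument is structurally identical to that of Lemma~\ref{nonper2}. The only point needing a little care is verifying that the correction term $[E_n]\cdot O(1)$ still vanishes in the periodic case; this works because $1-\lambda_n\to0$ already forces $[E_n]\to0$, the fact that $\int E_n1\,dm_n$ now tends to $1-e^{\phi^p(z)}$ rather than to $1$ being immaterial, as only its boundedness is used.
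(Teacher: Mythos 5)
Your proposal is correct and is exactly what the paper intends: its entire proof of this lemma is the single sentence that one combines the method of Lemma \ref{nonper2} (the Kac's theorem identity and the $\lambda_n^{k}E_n+\Psi_n^{k}$ decomposition) with the result of Lemma \ref{per1}, which is precisely what you carry out. Your added care about the $O(1)$ tail via Abel summation and about why $[E_n]\to 0$ still holds in the periodic case fills in details the paper leaves implicit, but the route is the same.
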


\begin{proof}The proof of this is a combination of the methods from Lemma \ref{nonper2} and the result of Lemma \ref{per1}.\end{proof}

Combining Lemmas \ref{nonper1}, \ref{nonper2}, \ref{per1} and \ref{per2} proves Proposition \ref{mainpert}.

\section{Escape rates for Gibbs Measures}

In this section we prove the analogue of Theorem \ref{escratesexp} in the setting of a topologically mixing subshift of finite type, namely we prove:  

\begin{thm}Suppose that $\{U_n\}_n$ satisfy assumptions (\ref{ass1})-(\ref{ass4}). Let $\phi:\Sigma\to\mathbb{R}$ be H\"older continuous and let $\mu$ denote the associated equilibrium state, then
\begin{equation}
\nonumber
\lim_{n\rightarrow\infty} \frac{r_\mu (U_n) }{\mu(U_{n}) }=
\begin{cases} 1 &\text{if }z \text{ is not periodic} \\
1-e^{\phi^p(z)-p P(\phi)}  & \text{if }z\text{ has prime period p}\end{cases}\end{equation}
where $\phi^p(z)=\phi(z)+\phi(\sigma(z))+\cdots+\phi(\sigma^{p-1}(z))$\label{escrates}.\end{thm}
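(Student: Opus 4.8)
The plan is to relate the escape rate $r_\mu(U_n)$ to the leading eigenvalue $\lambda_n$ of the perturbed transfer operator $\mathcal{L}_n$, and then invoke the asymptotic formula of Proposition \ref{mainpert}. First I would recall that, after normalising so that $\mathcal{L}1 = 1$ (equivalently $\lambda = 1$ and $P(\phi) = 0$), one has for any $u \in \Sigma$
\begin{equation}\nonumber
\mu\{x \in \Sigma : \sigma^i(x) \notin U_n,\ 0 \le i \le k-1\} = \int \chi_{\Sigma \setminus \bigcup_{j=0}^{k-1}\sigma^{-j}(U_n)}\, d\mu = \int \mathcal{L}^k\big(h_{n,k}\big)\, d\mu = \int \mathcal{L}_n^k 1\, d\mu,
\end{equation}
using that $\mathcal{L}^{*}\mu = \mu$ (as $\mu = g\,\nu$ with the normalisation $g=1$) and the identity $\mathcal{L}^k(h_{n,k} w) = \mathcal{L}_n^k w$ from Section 3. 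Hence
\begin{equation}\nonumber
r_\mu(U_n) = -\limsup_{k\to\infty} \frac{1}{k}\log \int \mathcal{L}_n^k 1\, d\mu.
\end{equation}

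The second step is to show this limsup is a genuine limit equal to $-\log\lambda_n$. Using quasi-compactness of $\mathcal{L}_n$ on $\mathcal{B}_\theta$ (Lemma \ref{essspec}) and the decomposition $\mathcal{L}_n = \lambda_n E_n + \Psi_n$ with $\|\Psi_n^k 1\|_\infty \le c q^k$ and $q < 1$ (Proposition \ref{resolvantbound1}), we get
\begin{equation}\nonumber
\int \mathcal{L}_n^k 1\, d\mu = \lambda_n^k \int E_n 1\, d\mu + \int \Psi_n^k 1\, d\mu.
\end{equation}
Since $g_n > 0$ is continuous on the compact space $\Sigma$ and $E_n 1$ is a positive multiple of $g_n$, the quantity $\int E_n 1\, d\mu$ is strictly positive (uniformly bounded below for large $n$ — this needs a short argument, e.g. from $\int E_n 1\, dm_n \to 1$ or $1-e^{\phi^p(z)}>0$ in the periodic case, together with $\|E_n 1\|_\infty \le c$), and bounded above by Proposition \ref{resolvantbound2}. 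For $n$ large enough that $\lambda_n > q$ (guaranteed by Proposition \ref{prelconv}, since $\lambda_n \to 1$), the first term dominates and
\begin{equation}\nonumber
\lim_{k\to\infty} \frac{1}{k}\log \int \mathcal{L}_n^k 1\, d\mu = \log \lambda_n,
\end{equation}
so $r_\mu(U_n) = -\log\lambda_n$ for all large $n$.

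The final step is purely asymptotic: with $r_\mu(U_n) = -\log\lambda_n$ and $\lambda_n \to 1$, write $-\log\lambda_n = (1-\lambda_n) + O((1-\lambda_n)^2)$, so
\begin{equation}\nonumber
\frac{r_\mu(U_n)}{\mu(U_n)} = \frac{1-\lambda_n}{\mu(U_n)} \cdot \big(1 + O(1-\lambda_n)\big),
\end{equation}
and Proposition \ref{mainpert} (in the normalised case $\lambda = 1$, $P(\phi)=0$) gives the limit $d_\phi(z)$, namely $1$ when $z$ is non-periodic and $1 - e^{\phi^p(z)}$ when $z$ has prime period $p$. Undoing the normalisation (replacing $\phi$ by $\phi - P(\phi)$) restores the factor $e^{\phi^p(z) - pP(\phi)}$ and completes the proof. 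The main obstacle is the second step: justifying that the $E_n$-term in the spectral decomposition does not degenerate — i.e. that $\int E_n 1\, d\mu$ stays bounded away from $0$ and $\infty$ uniformly in $n$ — so that it contributes nothing to the exponential growth rate; this is exactly where Propositions \ref{resolvantbound1} and \ref{resolvantbound2} and the positivity of $g_n$ are needed, and it is the place where the Keller–Liverani stability machinery of Section 3 does the real work.
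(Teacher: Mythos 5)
Your proposal is correct and follows essentially the same route as the paper: the identity $r_\mu(U_n)=-\log\lambda_n$ via the spectral decomposition $\mathcal{L}_n=\lambda_n E_n+\Psi_n$ together with Propositions \ref{resolvantbound1} and \ref{resolvantbound2} (this is the paper's Proposition \ref{rmu}), followed by the elementary asymptotic $\frac{-\log\lambda_n}{1-\lambda_n}\to 1$ and an appeal to Proposition \ref{mainpert}. Your extra care about $\int E_n 1\,d\mu$ being nondegenerate is a point the paper glosses over, but since $k\to\infty$ is taken with $n$ fixed, the positivity of $g_n$ (so $E_n1=\nu_n(1)g_n>0$) already suffices there.
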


It is well known that the escape rate $r_\mu(U_n)$ is related to the spectral radius $\lambda_n$ and we include the proof of the following proposition only for completeness. 	

\begin{prop}\begin{equation}\nonumber r_\mu(U_n)=-\log(\lambda_n).\end{equation}\label{rmu}\end{prop}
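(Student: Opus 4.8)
The plan is to identify the set of non-escaping points with an iterate of the perturbed transfer operator $\mathcal{L}_n$ and then extract the exponential rate from Proposition \ref{collopthm}. First I would observe that, by definition, the set
\[
\Sigma_n^{(k)}=\{x\in\Sigma\,:\,\sigma^i(x)\not\in U_n,\ 0\leq i\leq k-1\}
\]
has indicator function $h_{n,k}=\prod_{j=0}^{k-1}\chi_{U_n^c}\circ\sigma^j$, and that $\mu(\Sigma_n^{(k)})=\int h_{n,k}\,d\mu=\int \mathcal{L}^k(h_{n,k})\,d\mu$ using $\mathcal{L}^*\mu=\mu$ (recall $\mathcal{L}$ is normalised so that $\lambda=1$, $\mathcal{L}1=1$, and $\mu=\nu$). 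The key identity is that $\mathcal{L}^k(h_{n,k}w)=\mathcal{L}_n^k w$ for all $w$, which follows immediately by iterating the definition $\mathcal{L}_n w=\mathcal{L}(\chi_{U_n^c}w)$ and the formula for $\mathcal{L}_n^k$ recorded just before Lemma \ref{weak}. Hence $\mu(\Sigma_n^{(k)})=\int \mathcal{L}_n^k 1\,d\mu$.

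Next I would invoke the spectral picture for $\mathcal{L}_n$: by Proposition \ref{collopthm} (together with the remark that $g_n\in\mathcal{B}_\theta$ and $\lambda_n$ is a simple maximal eigenvalue) we may write $\mathcal{L}_n^k 1=\lambda_n^k E_n 1+\Psi_n^k 1$ with $\|\Psi_n^k 1\|_\infty\leq cq^k$ for some $q<\lambda_n$ (for $n$ fixed this is the elementary quasi-compactness estimate; Proposition \ref{resolvantbound1} even gives it uniformly in $n$). Integrating against $\mu$ gives
\[
\mu(\Sigma_n^{(k)})=\lambda_n^k\!\int E_n 1\,d\mu+\int\Psi_n^k1\,d\mu.
\]
Since $E_n1=c_ng_n$ with $g_n>0$ continuous and $\mathrm{supp}(\nu_n)=\Sigma_n$, one checks $\int E_n1\,d\mu=\nu_n(1)\,\mu(g_n\cdot\mathbf{1})$ type expression is a strictly positive constant $b_n>0$ (alternatively, it is positive because $\mathcal{L}_n^k1\geq 0$ is not eventually zero, as $\Sigma_n\neq\emptyset$ is a topologically mixing subsystem). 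Therefore there are constants $0<b_n$ and the remainder is $O(q^k)$ with $q<\lambda_n$, so
\[
-\lim_{k\to\infty}\frac{1}{k}\log\mu(\Sigma_n^{(k)})=-\log\lambda_n,
\]
and since the limit exists the $\limsup$ in the definition of $r_\mu(U_n)$ equals it; thus $r_\mu(U_n)=-\log\lambda_n$.

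The main obstacle is the positivity and non-degeneracy of the leading term: I must rule out $\int E_n1\,d\mu=0$, i.e. that the projection of the constant function $1$ onto the top eigenspace of $\mathcal{L}_n$ is nonzero after integrating against $\mu$. This is where topological mixing of $(\Sigma_n,\sigma|_{\Sigma_n})$ for large $n$ is used (guaranteed in the discussion preceding Proposition \ref{collopthm}): it forces $g_n>0$ on $\Sigma$ and $\nu_n$ to be a genuine probability measure with full support $\Sigma_n$, so $\int E_n1\,d\mu=\nu_n(1|_{\Sigma_n})\int g_n\,d\mu>0$ provided $\mu(\Sigma_n)>0$, which holds since $\Sigma_n$ contains cylinders. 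A minor secondary point is that for small $n$ the system $(\Sigma_n,\sigma)$ need not be mixing, but since Theorem \ref{escrates} concerns the limit $n\to\infty$ it suffices to establish the identity for all large $n$; I would simply state the proposition for $n$ large enough that $(\Sigma_n,\sigma|_{\Sigma_n})$ is topologically mixing. The remaining steps — interchanging $\int$ and the spectral decomposition, and passing from the exact asymptotic $\mu(\Sigma_n^{(k)})\sim b_n\lambda_n^k$ to the escape rate — are routine.
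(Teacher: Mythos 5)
Your argument follows exactly the same route as the paper: write $\mu(\Sigma_n^{(k)})=\int\mathcal{L}_n^k 1\,d\mu$, apply the spectral decomposition $\mathcal{L}_n^k=\lambda_n^kE_n+\Psi_n^k$, and read off the rate. The paper simply cites Propositions~\ref{resolvantbound1} and~\ref{resolvantbound2} for the estimates on the remainder and the projection, without spelling out why the leading coefficient $\int E_n1\,d\mu$ is nonzero; you are right that this nondegeneracy is the one point that deserves a sentence. However, your justification for it contains an error worth correcting: $\mu(\Sigma_n)$ is \emph{not} positive (indeed $\mu(\Sigma_n)=0$, since $\Sigma_n$ is the surviving set and $\mu$ is mixing with $\mu(U_n)>0$), and $\Sigma_n$ contains no cylinders, since every cylinder has a forward orbit entering $U_n$. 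The correct and much simpler reason is that $E_n1=\nu_n(1)\,g_n$ with $g_n$ strictly positive and continuous on \emph{all} of $\Sigma$ (Proposition~\ref{collopthm}) and $\nu_n$ a genuine probability measure, so $\int E_n1\,d\mu=\nu_n(1)\int g_n\,d\mu>0$ because $\mu$ is a probability measure on $\Sigma$; nothing about $\mu(\Sigma_n)$ is needed. With that repair your proof coincides with the paper's.
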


\begin{proof}We can write
\begin{eqnarray}\nonumber \mu\{x\in \Sigma\,:\,\sigma^i(x) \not\in U_n\, ,0\leq i\leq k-1\} &=& \int \left(\prod_{i=0}^{k-1} \chi_{U_n^c}(\sigma^i x)\right) d\mu(x)\\
	\nonumber & =& \int \mathcal{L}^k \left(\prod_{i=0}^{k-1} \chi_{U_n^c}(\sigma^i x)\right) d\mu(x) \\
	\nonumber &=& \int \mathcal{L}_n^k 1(x)d\mu(x) \\
	\nonumber &=& \lambda_n^k \int E_n1 d\mu + \int \Psi_n^k 1d\mu.\end{eqnarray}
Using Propositions \ref{resolvantbound1} and \ref{resolvantbound2} we see that

\begin{equation}\nonumber r_\mu(U_n)=\lim_{k\to\infty} -\frac{1}{k}\log \mu\{x\in \Sigma\,:\,\sigma^i(x) \not\in U_n\, ,0\leq i\leq k-1\} = -\log(\lambda_n) .\end{equation}	
	
	\end{proof}
	
We now prove Theorem \ref{escrates}.

\begin{proof}We assume that without loss of generality that $P(\phi)=0$.  In which case we see that from Lemma \ref{rmu} that 
	
\begin{eqnarray}\nonumber \frac{r_\mu(U_n)}{\mu(U_n)} &=& \frac{-\log(\lambda_n)}{\mu(U_n)} \\
\nonumber & = & \frac{\log(\lambda)-\log(\lambda_n)}{\mu(U_n)}\\
\nonumber &=& \frac{\lambda-\lambda_n}{\mu(U_n)}\frac{\log(\lambda)-\log(\lambda_n)}{\lambda-\lambda_n}.\end{eqnarray}
The result now follows from Proposition \ref{mainpert}.\end{proof}

We also can obtain results relating to the convergence of topological pressure.

\begin{thm}Suppose that $\{U_n\}_n$ satisfy assumptions (\ref{ass1})-(\ref{ass4}). Let $\phi:\Sigma\to\mathbb{R}$ be H\"older continuous and let $\mu$ denote the associated equilibrium state, then
\begin{equation}
\nonumber
\lim_{n\rightarrow\infty} \frac{P(\phi)-P_{\Sigma_n}(\phi)}{\mu(U_{n}) }=
		\begin{cases} 1 &\text{if }z\text{ is not periodic} \\
		1-e^{\phi^p(z)-p P(\phi)}  & \text{if }z\text{ has prime period p.}\end{cases}\end{equation}
		\label{toppres}\end{thm}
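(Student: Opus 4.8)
The plan is to reduce this statement to Theorem \ref{escrates}, which has already been established, by observing that the two quantities appearing in the limit are in fact equal. Recall from equation (\ref{sumpressure}) that $P_{\Sigma_n}(\phi) = \log\lambda_n$, where $\lambda_n$ is the maximal eigenvalue of the perturbed operator $\mathcal{L}_n$. On the other hand, Proposition \ref{rmu} identifies the escape rate as $r_\mu(U_n) = -\log(\lambda_n)$. Since we may assume, after scaling the potential, that $P(\phi) = \log\lambda = 0$ (equivalently $\mathcal{L}1 = 1$), we immediately obtain
\begin{equation}\nonumber P(\phi) - P_{\Sigma_n}(\phi) = -\log\lambda_n = r_\mu(U_n).\end{equation}
Hence the ratio $\frac{P(\phi)-P_{\Sigma_n}(\phi)}{\mu(U_n)}$ coincides termwise with $\frac{r_\mu(U_n)}{\mu(U_n)}$, and the conclusion follows directly from Theorem \ref{escrates}.

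For the general (unnormalised) case one argues exactly as in the proof of Theorem \ref{escrates}: write
\begin{equation}\nonumber \frac{P(\phi)-P_{\Sigma_n}(\phi)}{\mu(U_n)} = \frac{\log\lambda - \log\lambda_n}{\mu(U_n)} = \frac{\lambda-\lambda_n}{\mu(U_n)}\cdot\frac{\log\lambda-\log\lambda_n}{\lambda-\lambda_n},\end{equation}
and apply Proposition \ref{mainpert} together with the fact that $\lambda_n \to \lambda$ (Proposition \ref{prelconv}), so that the second factor converges to $1/\lambda$. Multiplying by the limit of the first factor supplied by Proposition \ref{mainpert} gives $1$ in the non-periodic case and $1 - \lambda^{-p}e^{\phi^p(z)} = 1 - e^{\phi^p(z)-pP(\phi)}$ in the periodic case.

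There is essentially no obstacle here: the theorem is a corollary of the machinery already developed, and the only thing to check is the bookkeeping identity $P(\phi) - P_{\Sigma_n}(\phi) = r_\mu(U_n)$, which is transparent from (\ref{sumpressure}) and Proposition \ref{rmu}. If anything, the one point deserving a line of justification is that $P_{\Sigma_n}(\phi) = \log\lambda_n$ refers to the same $\lambda_n$ as in Proposition \ref{mainpert} — this was noted in the discussion following Proposition \ref{collopthm}, where it is observed that $\lambda_n$ is the simple maximal eigenvalue of $\mathcal{L}_n$ acting on $\mathcal{B}_\theta$ as well as on $\mathcal{F}_\theta(\Sigma_n)$.
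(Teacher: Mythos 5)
Your proposal is correct and the second paragraph is essentially identical to the paper's own proof: the paper factors the quotient as $\frac{P(\phi)-P_{\Sigma_n}(\phi)}{e^{P(\phi)}-e^{P_{\Sigma_n}(\phi)}}\cdot\frac{\lambda-\lambda_n}{\mu(U_n)}$, notes the first factor tends to $e^{-P(\phi)}$, and invokes Proposition \ref{mainpert}, which is exactly your decomposition written with $\lambda=e^{P(\phi)}$. The preliminary reduction via $P(\phi)-P_{\Sigma_n}(\phi)=r_\mu(U_n)$ is a valid but redundant detour.
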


\begin{proof}Using $\lambda=e^{P(\phi)}$ we see that 

\begin{equation}
\label{rewrite} \frac{P(\phi)-P_{\Sigma_n}(\phi)}{\mu(U_n)} = \frac{P(\phi)-P_{\Sigma_n}(\phi)}{e^{P(\phi)}-e^{P_{\Sigma_n}(\phi)} }\frac{\lambda-\lambda_n}{\mu(U_n)}.
		\end{equation}

Observing that $\lim_{n\to\infty} \frac{P(\phi)-P_{\Sigma_n}(\phi)}{e^{P(\phi)}-e^{P_{\Sigma_n}(\phi)} } = e^{-P(\phi)}$, and combining this, (\ref{rewrite}) and Proposition \ref{mainpert} completes the proof.\end{proof}

An immediate corollary is the following:

\begin{cor}Let $\mu$ denote the measure of maximal entropy (i.e. the Parry measure \cite{Par64}), then 
	\begin{equation}
	\nonumber
	\lim_{n\rightarrow\infty} \frac{h_{top}(\sigma)-h_{top}(\sigma|_{\Sigma_n})) }{\mu(U_{n}) }=
			\begin{cases} 1 &\text{if }z\text{ is not periodic} \\
			1-e^{-p h_{top}(\sigma)}  & \text{if }z\text{ has prime period p.}\end{cases}\end{equation}
			\label{topent}
\end{cor}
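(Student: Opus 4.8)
The plan is to deduce the corollary directly from Theorem \ref{toppres} by specialising the potential to $\phi\equiv 0$. First I would recall that the measure of maximal entropy (the Parry measure) is precisely the equilibrium state associated with the zero potential: with $\phi\equiv 0$ the variational principle defining $P(\phi)$ in the introduction reduces to $P(0)=\sup\{h_\nu\,:\,\sigma_*\nu=\nu,\ \nu(\Sigma)=1\}=h_{top}(\sigma)$, and the supremum is attained at $\mu$. Since $\phi\equiv 0$ is trivially H\"older continuous (and $A$ is irreducible and aperiodic throughout this section), Theorem \ref{toppres} applies to it, provided we also note that the family $\{U_n\}_n$ is, as in the statement, assumed to satisfy assumptions (\ref{ass1})--(\ref{ass4}) with respect to this particular $\mu$.

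Next I would identify the two quantities appearing in the numerator of Theorem \ref{toppres}. Applying the variational principle to the subsystem $(\Sigma_n,\sigma|_{\Sigma_n})$ gives $P_{\Sigma_n}(0)=h_{top}(\sigma|_{\Sigma_n})$, while $P(0)=h_{top}(\sigma)$ as above. Hence the ratio $\frac{P(\phi)-P_{\Sigma_n}(\phi)}{\mu(U_n)}$ furnished by Theorem \ref{toppres} is exactly $\frac{h_{top}(\sigma)-h_{top}(\sigma|_{\Sigma_n})}{\mu(U_n)}$, which is the object in the corollary.

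Finally I would evaluate the right-hand side of Theorem \ref{toppres} for $\phi\equiv 0$. If $z$ is not periodic the limit is $1$. If $z$ has prime period $p$, then $\phi^p(z)=\phi(z)+\phi(\sigma(z))+\cdots+\phi(\sigma^{p-1}(z))=0$, so $\phi^p(z)-pP(\phi)=-p\,h_{top}(\sigma)$ and the limit equals $1-e^{-p\,h_{top}(\sigma)}$. This is precisely the asserted formula, so the corollary follows. I do not anticipate any genuine obstacle: the whole content is a substitution, and the only points deserving a word of care are the identification of the Parry measure with the equilibrium state of the zero potential and the fact that the hypotheses on $\{U_n\}_n$ are to be read relative to this $\mu$, which is exactly what licenses the appeal to Theorem \ref{toppres}.
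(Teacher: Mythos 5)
Your proposal is correct and is exactly the argument the paper intends: the corollary is presented as an immediate consequence of Theorem \ref{toppres}, obtained by taking $\phi\equiv 0$, identifying the Parry measure as the equilibrium state of the zero potential, and noting $P(0)=h_{top}(\sigma)$, $P_{\Sigma_n}(0)=h_{top}(\sigma|_{\Sigma_n})$, and $\phi^p(z)=0$. Nothing further is needed.
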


\begin{remark}The rate of convergence of topological entropy of the restriction of the shift to these sets was studied by Lind \cite{Lind89} who proved, in the case that the $U_n$ consisted of a single cylinder of length $n$, i.e. $U_n=[z]_n$, the existence of a constant $c>1$ such that

\begin{equation}\nonumber 1/c\leq \frac{h_{top}(\sigma)-h_{top}(\sigma|_{\Sigma_n})}{\mu(U_n)}\leq c\,\,\,\text{ for all }n.\end{equation}\end{remark}

\section{Proof of Theorem \ref{escratesexp}}

In this section we prove Theorem \ref{escratesexp}.  Let $\mathcal{M}$ be a Riemannian manifold and $f:\mathcal{M}\to\mathcal{M}$ a $C^{1}$-map.  Let $J$ be a compact subset of $\mathcal{M}$ such that $f(J)=J$.  We say that the pair $(J,f)$ is a conformal repeller if
\begin{enumerate}
\item $f|_{J}$ is a conformal map.
\item there exists $c>0$ and $\lambda>1$ such that $\|df^{n}_x v\|\geq c\lambda^n\|v\|$ for all $x\in J$, $v\in T_x\mathcal{M}$, and $n\geq 1$. 	
\item $f$ is topologically mixing on $J$.
\item $J$ is maximal, i.e. there exists an open neighbourhood $V\supset J$ such that
	\begin{equation}\nonumber J=\{x\in V\,:\,f^{n}(x)\in V\,\,\text{for all }n\geq 0\}.\end{equation}
\end{enumerate}

Let $\phi:J\to\mathbb{R}$ be $\alpha$-H\"older and let $\mu$ denote the associated equilibrium state.  For an open set $U\subset J$ we let $r_{\mu}(U)$ denote the escape rate of $\mu$ through $U$.  

It is well known that an expanding map has a finite Markov partition $\{R_1,R_2,\ldots,R_l\}$, and that there exists a continuous semi-conjugacy $\pi:\Sigma\to J$ where $\Sigma$ is a subshift of finite type on $l$ symbols.  By choosing $\lambda^{-\alpha}<\theta<1$ and considering $\Sigma$ equipped with the metric $d_\theta$ it can be seen that the map $\tilde{\phi}=\phi\circ\pi:\Sigma\to\mathbb{R}$ is $d_\theta$-Lipshitz, and so $\tilde{\phi}\in\mathcal{B}_\theta$.

We state without proof the following result of Bowen \cite{Bow70}.

\begin{prop}[Bowen]There exists a positive integer $d$ such that 
the cardinality of $\pi^{-1}(x)$ is at most $d$,  for all $x\in J$. \label{Bowen1}  \end{prop}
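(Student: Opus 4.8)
The plan is to exploit the maximality condition (4) in the definition of a conformal repeller together with the fact that the Markov partition elements $R_1,\dots,R_l$ have pairwise disjoint interiors. First I would recall the standard construction of the semi-conjugacy: given a Markov partition $\{R_1,\dots,R_l\}$ with $\mathrm{diam}(R_i)$ small (smaller than the Lebesgue number of the cover given by the neighbourhood $V$), one sets $\pi(x) = \bigcap_{n\geq 0} f^{-n}(R_{x_n})$ for $x = (x_n)_{n\geq 0} \in \Sigma$; expansivity (condition (2)) guarantees this intersection is a single point, and the Markov property guarantees it is non-empty whenever $x\in\Sigma$. Thus $\pi^{-1}(y)$ is precisely the set of admissible sequences $(x_n)$ with $f^n(y) \in R_{x_n}$ for all $n\geq 0$.

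The key step is then a combinatorial counting argument. Since the $R_i$ have disjoint interiors, a point $w\in J$ can lie in more than one $R_i$ only if it lies on the common boundary $\partial R_i \cap \partial R_j$; let $B = \bigcup_i \partial R_i$ be the (closed, nowhere dense) boundary set. If $y\in J$ has an orbit that never meets $B$, then each $f^n(y)$ lies in a unique $R_{x_n}$ and $\pi^{-1}(y)$ is a singleton. In general, the number of sequences in $\pi^{-1}(y)$ is bounded by $\prod_{n\geq 0} N(f^n(y))$ where $N(w) = \#\{i : w \in R_i\}$, and $N(w) = 1$ unless $w\in B$. The main obstacle is to control how often an orbit can hit $B$: one uses that $f(B) \subseteq B$ (or more precisely that the Markov property forces the images of boundaries to sit inside the union of boundaries), so once $f^n(y)\in B$ for some $n$, this contributes only a bounded branching factor, and crucially the boundary pieces themselves form a repeller of lower "complexity." A cleaner route, which I would actually pursue, is to observe that $\pi$ is injective off the set $\pi^{-1}\big(\bigcup_{n\geq 0} f^{-n}(B)\big)$, and that the number of preimages is globally bounded by counting, for each $y$, the admissible itineraries through the at most $2^{l}$-bounded (indeed boundedly-many) choices available at boundary visits — the finiteness of the alphabet caps the branching at each step by $l$, and a compactness/expansivity argument shows only boundedly many coordinates can be "ambiguous."

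Concretely, here is the argument I would write down. Choose the Markov partition fine enough that each $R_i$ is contained in a chart and $\mathrm{diam}(R_i) < \delta$ where $\delta$ is a Lebesgue number for $V \supseteq J$. For $y\in J$, suppose $x, x' \in \pi^{-1}(y)$ with $x_0 = x'_0, \dots, x_{n-1} = x'_{n-1}$ but $x_n \neq x'_n$; then $f^n(y) \in R_{x_n}\cap R_{x'_n} \subseteq B$. So two distinct preimages first disagree at a coordinate $n$ with $f^n(y)\in B$. Since $f(\partial R_i) \subseteq \bigcup_j \partial R_j$ by the Markov property, the set $B$ satisfies $f(B)\subseteq B$; combined with expansivity, $B\cap J$ contains no nontrivial open-in-$J$ set, and the sub-system $(B\cap J, f)$ decomposes into finitely many pieces each of which, by the maximality and mixing hypotheses applied inductively on the dimension of the boundary strata, admits the same structure with strictly fewer symbols — giving a bound $d = d(l)$ by induction on $l$. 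I expect the induction on the stratification of $B$, i.e. showing the branching terminates, to be the main technical obstacle; the base case $l=1$ is trivial since then $\pi$ is a bijection. Alternatively, and more in the spirit of Bowen's original argument \cite{Bow70}, one directly shows $\#\pi^{-1}(y) \leq l^{N}$ is impossible to exceed a fixed bound by noting that distinct preimages correspond to distinct points of $J$ under a shifted iterate, contradicting injectivity of $\pi$ on a full-measure set together with finiteness of the fibre over boundary points; I would cite \cite{Bow70} for the details and simply record the uniform bound $d$.
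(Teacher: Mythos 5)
The paper does not actually prove this proposition: it is stated explicitly ``without proof'' and attributed to \cite{Bow70}, so your closing move of deferring to Bowen for the details is, in effect, exactly what the authors do. Your opening observations are also correct: if $x\neq x'\in\pi^{-1}(y)$ first disagree at coordinate $n$, then $f^{n}(y)\in R_{x_n}\cap R_{x'_n}\subseteq B$ where $B=\bigcup_i\partial R_i$, and the Markov property does give $f(B)\subseteq B$.

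However, read as a self-contained proof, the sketch has a genuine gap precisely at the step you flag as the ``main technical obstacle.'' First, the claim that $(B\cap J,f)$ ``admits the same structure with strictly fewer symbols,'' enabling an induction on $l$ or on the ``dimension of the boundary strata,'' is unsupported: $B\cap J$ is a closed forward-invariant set, but there is no reason it is a conformal repeller, is topologically mixing, or carries a Markov partition on fewer symbols, and for a fractal repeller the relative boundaries $\partial R_i$ have no useful dimension stratification. Second, the assertion that ``only boundedly many coordinates can be ambiguous'' is false: since $f(B)\subseteq B$, an orbit that enters $B$ stays there forever (e.g.\ a periodic point on the boundary has \emph{every} coordinate ambiguous), so the product bound $\prod_{n\ge 0}N(f^{n}(y))$ diverges. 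The finiteness of the fibre does not come from bounding local branching at all; it comes from the global constraint that the whole tail of the sequence must actually code $y$ (for the doubling map with partition $\{[0,\tfrac12],[\tfrac12,1]\}$, every coordinate of a preimage of $0$ is ambiguous, yet $\pi^{-1}(0)$ has only two elements because, say, $0111\cdots$ codes $\tfrac12$, not $0$). Capturing this requires Bowen's pigeonhole-plus-expansivity argument on tuples of symbols, which the sketch does not supply; the ``alternative'' route you mention is circular, since ``finiteness of the fibre over boundary points'' is the statement to be proved. Given that the paper itself only cites \cite{Bow70}, the honest fix is to drop the sketch and keep the citation.
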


This proposition was used to prove the following corollary:

\begin{cor}[Bowen]$x\in\Sigma$ is periodic if and only if $\pi(x)\in J$ is periodic.\label{Bowen2}\end{cor}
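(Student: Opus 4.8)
The plan is to prove Corollary \ref{Bowen2} using the bound on fibre cardinality from Proposition \ref{Bowen1} together with the pigeonhole principle. One direction is immediate: if $x \in \Sigma$ is periodic with period $q$, then $\sigma^q x = x$, so $\pi(\sigma^q x) = \pi(x)$, and since $\pi$ is a semi-conjugacy $\pi \circ \sigma = f \circ \pi$, this gives $f^q(\pi(x)) = \pi(x)$, hence $\pi(x)$ is periodic. The substance is in the converse.

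For the converse, suppose $y = \pi(x) \in J$ is periodic with $f^q(y) = y$. The key observation is that $\pi$ intertwines $\sigma$ and $f$, so $\pi(\sigma^{kq} x) = f^{kq}(\pi(x)) = y$ for every $k \geq 0$. Thus the infinitely many points $\{\sigma^{kq} x : k \geq 0\}$ all lie in the fibre $\pi^{-1}(y)$, which by Proposition \ref{Bowen1} has at most $d$ elements. By the pigeonhole principle there exist $0 \leq k_1 < k_2$ with $\sigma^{k_1 q} x = \sigma^{k_2 q} x$. Writing $a = k_1 q$ and $b = (k_2 - k_1)q > 0$, this says $\sigma^{a+b} x = \sigma^a x$, i.e. the sequence $x$ is eventually periodic: $x_{n+b} = x_n$ for all $n \geq a$.

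It remains to upgrade ``eventually periodic'' to ``periodic''. Here I would use that $(\Sigma,\sigma)$ is a subshift of finite type together with the fact that $x$ is a two-sided tail of a genuine point — but actually the cleaner route is to note that $\pi^{-1}(\mathcal{O}(y))$ is a finite, $\sigma$-invariant set (where $\mathcal{O}(y) = \{y, f(y), \ldots, f^{q-1}(y)\}$), since each of the finitely many fibres is finite and $\sigma$ permutes... no, $\sigma$ maps $\pi^{-1}(f^j y)$ into $\pi^{-1}(f^{j+1} y)$. So $\pi^{-1}(\mathcal{O}(y))$ is a finite set mapped into itself by $\sigma$; restricted to this finite set, some power $\sigma^{N}$ is a bijection (the eventual image of a self-map of a finite set), and $x$ lies in that eventual image because $x = \sigma^{k_1 q}(\sigma^{-k_1 q}\text{-preimage})$... more directly, from $\sigma^{a+b}x = \sigma^a x$ we get that $\sigma^a x$ is genuinely periodic with period dividing $b$; call it $x' = \sigma^a x$. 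Then $x$ is a point with $\sigma^a x = x'$ periodic. Since $\Sigma$ is the full inverse limit type object, one argues: consider the periodic point $w \in \Sigma$ obtained by periodically repeating the block $x'_0 x'_1 \cdots x'_{b-1}$; then $w$ and $x$ have the same image under $\pi$ on the tail, and by injectivity-up-to-$d$ plus shifting one concludes $x$ itself is periodic. The cleanest phrasing: the fibre $\pi^{-1}(y)$ is finite and $\sigma^q$ maps it into itself, so $\sigma^q$ restricted to $\pi^{-1}(y)$ is eventually a bijection on its image $F$; but the $k_1 < k_2$ argument shows $x$ already lies on a $\sigma^q$-cycle within $\pi^{-1}(y)$, hence $\sigma^{(k_2-k_1)q} x = x$ and $x$ is periodic.

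I expect the main obstacle to be precisely this last step: carefully converting the eventual periodicity $\sigma^{a+b}x = \sigma^a x$ into genuine periodicity $\sigma^b x = x$. The pigeonhole argument naturally produces a relation of the form $\sigma^{k_1 q} x = \sigma^{k_2 q} x$ with $k_1$ possibly positive, and one must exploit that the $k$ ranges over all of $\mathbb{N}$ (not just a bounded range) to force $k_1 = 0$ — indeed, applying pigeonhole to the infinite sequence $(\sigma^{kq}x)_{k\geq 0}$ in the finite set $\pi^{-1}(y)$, once we know $\sigma^{a+b}x = \sigma^a x$ we can also observe that among $x, \sigma^q x, \sigma^{2q}x, \ldots$ the values cycle, and $x$ being the first term lies on the cycle, giving $\sigma^{Nq}x = x$ for some $N \geq 1$. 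This is the step requiring a little care but no deep input; everything else is formal manipulation of the semi-conjugacy.
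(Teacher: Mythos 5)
Your forward direction and the pigeonhole setup are fine, but the converse has a genuine gap at exactly the point you flag, and none of your proposed patches closes it. From $\sigma^{k_1q}x=\sigma^{k_2q}x$ with $0\le k_1<k_2$ you obtain only that $\sigma^a x$ is periodic for $a=k_1q$, i.e.\ that $x$ is \emph{eventually} periodic. The assertion that ``$x$ already lies on a $\sigma^q$-cycle within $\pi^{-1}(y)$'' does not follow: $\sigma^q$ restricted to the finite set $\pi^{-1}(y)$ is merely a self-map of a finite set, and such a map can have transient points that fall onto a cycle without lying on it. Being the first term of the orbit sequence $x,\sigma^qx,\sigma^{2q}x,\dots$ makes $x$ the \emph{most} likely candidate to be off the cycle, not on it; the fact that $k$ ranges over all of $\mathbb{N}$ gives you no leverage to force $k_1=0$. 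Likewise, ``$x$ lies in the eventual image of $\sigma^q|_F$'' is precisely what you would need to prove and is not justified. In the one-sided setting, nothing so far rules out $x=u\overline{v}$ with a non-matching prefix $u$, which would happen exactly when some $f^n(y)$ sits on the boundary of two Markov rectangles and $x$ makes different symbol choices at positions $n$ and $n+b$.

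It is worth noting why the classical argument is clean in Bowen's original context: there $\sigma$ is a \emph{two-sided} (invertible) shift, so $\sigma^{k_1q}x=\sigma^{k_2q}x$ immediately gives $x=\sigma^{(k_2-k_1)q}x$ by applying $\sigma^{-k_1q}$. In the one-sided setting of this paper that step is unavailable, and closing the gap requires an additional input --- e.g.\ showing that $\sigma^q$ restricted to the finite, $\sigma^q$-invariant fibre $\pi^{-1}(y)$ is injective or surjective (hence a bijection, so every point of the fibre is genuinely periodic), or passing to the natural extension. Establishing that injectivity/surjectivity uses the Markov structure of the partition and is the real content of the corollary; the paper itself does not supply these details and simply cites Bowen.
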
 

We also require the following technical lemmas.

\begin{lemma}For any periodic point $z\in J$ there exists a Markov partition $\{R_1,R_2,\ldots,R_m\}$ such that $z\in\bigcup_{i=1}^m \text{int}(R_i)$.\label{markovperiodic}\end{lemma}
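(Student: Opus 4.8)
The plan is to revisit the standard construction of a Markov partition for an expanding map and to choose the combinatorial input so that the partition boundary misses the orbit of $z$. Write $\mathcal{O}(z)=\{z,f(z),\dots,f^{p-1}(z)\}$, a finite, forward-invariant subset of $J$. Recall that for an expanding map one fixes $\beta>0$ smaller than an expansivity constant, takes a finite open cover $\mathcal{U}=\{U_1,\dots,U_N\}$ of $J$ of mesh $<\beta$, replaces it by a sufficiently high refinement $\bigvee_{j=0}^{K}f^{-j}\mathcal{U}$ to make it generating, and lets the Markov partition $\mathcal{R}=\{R_1,\dots,R_m\}$ consist of the closures of the atoms of the resulting partition (the expanding-map version of Bowen's construction, cf. \cite{Bow70} and standard references). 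The structural fact I would use is that each boundary point of a rectangle comes from a point whose forward orbit meets $\bigcup_k\partial U_k$, i.e.
\begin{equation}\nonumber \Gamma_{\mathcal{R}}:=\bigcup_{i=1}^{m}\partial R_i\ \subseteq\ \bigcup_{n\geq 0}f^{-n}\Bigl(\bigcup_{k=1}^{N}\partial U_k\Bigr).\end{equation}

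Granting this, the argument is short. First I would choose the cover: let $\mathcal{U}$ be a finite collection of open metric balls $B(c_k,r_k)$ covering $J$ with each $r_k<\beta/2$, the radii $r_k$ chosen outside the finite bad set $\{\,d(c_k,w):w\in\mathcal{O}(z)\,\}$; then no point of $\mathcal{O}(z)$ lies on any sphere $\partial B(c_k,r_k)$, so $\mathcal{O}(z)\cap\bigcup_k\partial U_k=\emptyset$. Next, since $\mathcal{O}(z)$ is forward invariant, if $w\in\mathcal{O}(z)$ satisfied $f^{n}(w)\in\bigcup_k\partial U_k$ for some $n\geq 0$ then $f^{n}(w)\in\mathcal{O}(z)\cap\bigcup_k\partial U_k=\emptyset$, a contradiction; hence $\mathcal{O}(z)$ is disjoint from $\bigcup_{n\geq 0}f^{-n}(\bigcup_k\partial U_k)$, and therefore from $\Gamma_{\mathcal{R}}$. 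In particular $z\notin\Gamma_{\mathcal{R}}$, so $z$ lies in exactly one $R_i$ and, being outside the closed set $\Gamma_{\mathcal{R}}\supseteq\partial R_i$, lies in $\text{int}(R_i)$. The same computation shows the whole orbit $\mathcal{O}(z)$ lies in $\bigcup_i\text{int}(R_i)$, which is what is really needed to code $z$ by a single point of $\Sigma$; and since the mesh of $\mathcal{R}$ can be made arbitrarily small by shrinking $\beta$, the semiconjugacy $\pi$ still satisfies $\tilde\phi=\phi\circ\pi\in\mathcal{B}_\theta$ for a suitable $\lambda^{-\alpha}<\theta<1$, so all earlier results apply to the new coding.

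The step I expect to be the main obstacle is justifying the displayed inclusion for $\Gamma_{\mathcal{R}}$, as it is sensitive to the precise construction: one must check that refining a fine cover to a generating one preserves the Markov property in the expanding setting and that the closures of the refined atoms acquire no extra boundary beyond preimages of the $\partial U_k$. I would handle this through the itinerary description of the rectangles ($x\sim y$ iff $f^{n}(x)$ and $f^{n}(y)$ belong to the same members of $\mathcal{U}$ for all $n\geq 0$, with $R_i$ the closure of a class): a point $x$ is interior exactly when some neighbourhood of $x$ has constant itinerary, which fails only if $f^{n}(x)\in\partial U_k$ for some $n,k$. As a sanity check and alternative, for periodic $z$ one can build the partition "around" the orbit directly: taking the inverse branch $\tau$ of $f^{p}$ fixing $z$ on $\overline{B(z,\beta)}$, which is a contraction so $\tau(\overline{B(z,\beta)})\subset B(z,\beta)$, one obtains arbitrarily small neighbourhoods of $z$ that are unions of rectangles, again forcing $z$ into an interior.
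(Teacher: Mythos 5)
Your overall strategy---perturb finitely many ``seed'' boundaries off the finite forward-invariant set $\mathcal{O}(z)$, observe that the partition boundary $\Gamma_{\mathcal{R}}$ is contained in $\bigcup_{n\geq 0}f^{-n}\bigl(\bigcup_{k}\partial U_k\bigr)$, and conclude $z\notin\Gamma_{\mathcal{R}}$, hence $z\in\mathrm{int}(R_i)$---is the right idea, and the radius-perturbation step and the final deduction are correct. The paper disposes of the lemma with a one-line appeal to the standard shadowing construction, so you are supplying detail the authors skip; the question is whether your construction actually delivers a Markov partition to which the displayed inclusion applies.

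That is where the gap is. A join $\bigvee_{j=0}^{K}f^{-j}\mathcal{U}$ of an open \emph{cover} has no atoms in the partition sense (its elements overlap), and your itinerary relation ($x\sim y$ iff $f^{n}(x)$ and $f^{n}(y)$ lie in the same members of $\mathcal{U}$ for \emph{all} $n\geq 0$) degenerates when $\mathrm{mesh}(\mathcal{U})$ is below the expansivity constant: $x\sim y$ forces $d(f^{n}x,f^{n}y)<\beta$ for every $n$ and hence $x=y$, so the classes are singletons and their closures are not a partition into sets with nonempty interior. If you instead disjointify $\mathcal{U}$ into a genuine partition and take a finite refinement, the boundary inclusion does hold, but the result is not Markov: the property $f(\mathrm{int}\,R_i)\cap\mathrm{int}\,R_j\neq\emptyset\Rightarrow f(R_i)\supseteq R_j$ is not created by refining an arbitrary partition, and establishing it is the entire content of the existence theorem. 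For the partitions that \emph{are} produced by the standard (shadowing) construction---rectangles $T_s=\theta(\{q:q_0=s\})$ followed by the Bowen subdivision into sets of the form $T_s\cap T_{s'}$, $T_s\setminus T_{s'}$, etc.---the boundary is not simply a union of preimages of spheres of the initial cover, so your structural inclusion cannot just be asserted; one must instead include $\mathcal{O}(z)$ among the pseudo-orbit centres and track the boundaries through the subdivision step. Your fallback via the inverse branch of $f^{p}$ fixing $z$ produces small neighbourhoods of $z$ but does not show they are unions of rectangles of some Markov partition. To close the argument you need either to carry out the shadowing construction with this extra bookkeeping, or to cite it as the paper does.
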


\begin{proof}This follows easily from the standard construction of Markov partitions (using shadowing), for example see \cite{Zin00}. \end{proof}

\begin{lemma}There exists constants $s, c_1>0$ such that $\mu(B(z,\epsilon))\leq c_1\epsilon^s$ for all $\epsilon>0$.\end{lemma}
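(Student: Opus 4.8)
The plan is to transfer the estimate $\mu(B(z,\epsilon))\le c_1\epsilon^s$ from the symbolic side, where the Gibbs property (\ref{gibbsoriginal}) gives us good control over the measure of cylinders. First I would fix a Markov partition $\{R_1,\ldots,R_l\}$ for $(J,f)$ and the associated semi-conjugacy $\pi:\Sigma\to J$, and recall that $\tilde\phi=\phi\circ\pi\in\mathcal{B}_\theta$ with equilibrium state $\tilde\mu$ satisfying $\pi_*\tilde\mu=\mu$. The key geometric input is that the repeller is conformal and expanding: there exist constants $c>0$, $\lambda>1$ with $\|df^n_xv\|\ge c\lambda^n\|v\|$, which by the bounded distortion property for conformal maps gives that the image $R_{i_0\cdots i_{n-1}}=R_{i_0}\cap f^{-1}R_{i_1}\cap\cdots\cap f^{-(n-1)}R_{i_{n-1}}$ of a cylinder $[i_0,\ldots,i_{n-1}]$ has diameter comparable to $\lambda^{-n}$ (up to a uniform multiplicative constant). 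Conversely, there is a uniform constant $D$ so that each such cylinder image contains a ball of radius $\ge D^{-1}\lambda^{-n}$.

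Next I would estimate, for given $\epsilon>0$, how many $n$-cylinder images can meet $B(z,\epsilon)$. Choosing $n=n(\epsilon)$ to be the largest integer with $c'\lambda^{-n}\ge\epsilon$ (so $\lambda^{-n}\asymp\epsilon$), any cylinder image $R_{i_0\cdots i_{n-1}}$ meeting $B(z,\epsilon)$ is contained in $B(z,\epsilon+c'\lambda^{-n})\subset B(z,C\epsilon)$ for a suitable $C$; since each such image contains an interior ball of radius $\gtrsim\lambda^{-n}\asymp\epsilon$ and these interiors are pairwise disjoint, a volume (or rather bounded-multiplicity) argument in the ambient manifold bounds the number of such cylinders by a constant $K$ independent of $\epsilon$. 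Therefore $B(z,\epsilon)$ is covered by at most $K$ cylinder images of generation $n(\epsilon)$, and
\begin{equation}\nonumber
\mu(B(z,\epsilon))\le\sum \mu(R_{i_0\cdots i_{n-1}})\le K\max_{|i|=n}\tilde\mu[i]_n\le Kc\,\max_{|i|=n}e^{\tilde\phi^n(i)},
\end{equation}
using the Gibbs property. Writing $P=P(\phi)$ and using that $\tilde\phi^n(i)\le n(\sup\phi)$ — or more precisely $\tilde\phi^n(i)-nP\le n(\sup\phi-P)$, where $\sup\phi-P<0$ unless $\phi$ is cohomologous to a constant, in which case one argues directly — one obtains $\mu(B(z,\epsilon))\le K'\,e^{n(\epsilon)(\sup\phi)}$. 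Since $\lambda^{-n(\epsilon)}\asymp\epsilon$, this is $\lesssim\epsilon^{s}$ with $s=-\log(\sup\phi\text{-adjusted rate})/\log\lambda>0$; equivalently one simply defines $s$ to be this exponent, which is positive because each cylinder measure decays exponentially in $n$ while $\lambda^{-n}$ also decays exponentially, and sets $c_1$ accordingly.

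The main obstacle I anticipate is the bounded-multiplicity (covering) argument: one must ensure that the number of generation-$n(\epsilon)$ cylinder images meeting $B(z,\epsilon)$ is bounded uniformly in $\epsilon$, which requires the bounded distortion estimate (so that cylinder images are comparable to round balls of the right size) together with the fact that their interiors are disjoint and of comparable radius, so that a fixed ball $B(z,C\epsilon)$ cannot contain too many of them. This is where conformality is essential — in higher dimensions without conformality the cylinder images could be long thin ellipsoids and the count would fail. A secondary technical point is handling boundary effects of the Markov partition (the set $\pi^{-1}(\partial R_i)$ has $\mu$-measure zero, so it does not affect the measure estimate, but one should remark on it). Once these are in place the estimate follows directly from the Gibbs inequality (\ref{gibbsoriginal}).
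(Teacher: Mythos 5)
Your overall strategy — cover $B(z,\epsilon)$ by finitely many Markov cylinder images and bound the measure of each via the Gibbs property — is the right one and is what the paper does. But two steps as you have written them do not hold.

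The main gap is in the bounded-multiplicity argument with fixed generation $n=n(\epsilon)$. You assert that each image $R_{i_0\cdots i_{n-1}}$ has diameter \emph{comparable} to $\lambda^{-n}$ and hence contains an interior ball of radius $\gtrsim\lambda^{-n}\asymp\epsilon$. The conformal repeller hypotheses only give a one-sided estimate $\|df^n_x v\|\geq c\lambda^n\|v\|$; on the compact $J$ the derivative is also bounded above, say $\|df\|\leq\Lambda$, but in general $\Lambda>\lambda$. Bounded distortion then gives ${\rm diam}(R_{i_0\cdots i_{n-1}})$ lying anywhere in $[c^{-1}\Lambda^{-n},\,c\lambda^{-n}]$, a range whose endpoints differ by an exponential factor. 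Consequently the generation-$n$ pieces meeting $B(z,\epsilon)$ need not have inradius $\gtrsim\epsilon$; a ball of radius $\sim\epsilon=\lambda^{-n}$ can contain on the order of $(\Lambda/\lambda)^{n\cdot\dim}$ pieces, which grows exponentially in $n$, and your constant $K$ is not uniform. The paper sidesteps exactly this by using the Moran cover $\mathcal{U}_\epsilon$: each element is a cylinder image whose \emph{generation is allowed to vary from piece to piece}, chosen so that its diameter is just below $\epsilon$ while the parent cylinder has diameter above $\epsilon$. With this variable-depth cover the pieces genuinely have comparable size, the multiplicity bound $k\leq K$ is a standard property of the Moran construction (Pesin), and one also records $c\lambda^{-n(z^i)}\leq\epsilon$ so that the generation of each piece used is at least of order $\log(1/\epsilon)$. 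If you want to keep a direct packing argument rather than citing the Moran cover, you must make this depth adjustment.

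The secondary issue is your justification of the exponential decay of cylinder measures. You claim $\sup\phi-P(\phi)<0$ ``unless $\phi$ is cohomologous to a constant,'' but this dichotomy is not correct: the inequality $\sup\phi<P(\phi)$ can fail or hold independently of whether $\phi$ is cohomologous to a constant. What is true, and what the paper invokes as a standard property of Gibbs states on a mixing subshift with more than one symbol, is that there exist $c>0$ and $\gamma\in(0,1)$ with $\tilde\mu[x]_n\leq c\gamma^n$. The clean way to see this is to normalize so that $\mathcal{L}1=1$; then for $N$ large each point has at least two $\sigma^N$-preimages, so $\sum_{\sigma^N y=x}e^{\phi^N(y)}=1$ forces $\sup\phi^N<0$, and the Gibbs inequality $\mu[x]_n\leq c\,e^{\phi^n(x)}$ gives geometric decay along multiples of $N$ (hence for all $n$ up to a constant). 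Once these two points are repaired, the rest of your argument — the Gibbs bound per cylinder, the comparison $\lambda^{-n}\asymp\epsilon$, and the resulting exponent $s=-\log\gamma/\log\lambda$ — is in line with the paper's proof.
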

	
\begin{proof}Let $\tilde{\phi}:\Sigma\to\mathbb{R}$ be defined by $\tilde{\phi}(x)=\phi(\pi(x))$, and denote the associated equilibrium state by $\tilde{\mu}$, then $\mu=\pi^*(\tilde{\mu})$.

For $\epsilon>0$, let $\mathcal{U}_\epsilon$ denote the Moran cover associated with the Markov partition $\{R_1,R_2,\ldots,R_m\}$ (see \cite[pg. 200]{Pes97}).  Then for $z\in J$ we choose elements $U_1,U_2,\ldots,U_k\in\mathcal{U}_\epsilon$ which intersect $B(z,\epsilon)$.  A basic property of Moran covers is that:
\begin{enumerate}
\item $U_i=\pi[z^{i}_0 z^{i}_1 \cdots z_{n_(z^{i})}^i]$, where $z^{i}\in\Sigma$.
\item ${\rm diam}(U_i)\leq \epsilon < {\rm diam}\left(\pi[z^{i}_0 z^{i}_1 \cdots z_{n(z^{i})-1}^i]\right)$.
\item $k\leq K$, where $K$ is independent of both $z$ and $\epsilon$. 
\end{enumerate}
In which case it suffices to show that $\mu(U_i)\leq c\epsilon^s$ for some constant $c>0$.  To see this we observe a basic property of Gibbs measures is that for any $x\in\Sigma$ there exists $c>0$ and $\gamma\in (0,1)$ such that $\tilde{\mu}[x]_n \leq c\gamma^n$ for $n=1,2,\ldots$.  In addition to $f\in C^{1+\alpha}$ and conformal we have that $c\lambda^{-n(z^i)}\leq \epsilon$ for any $\epsilon>0$.  In which case we see that 
\begin{equation}\nonumber \mu(B(z,\epsilon))\leq \sum_{i=1}^k \mu(U_i)=\sum_{i=1}^k \tilde{\mu}[z^{i}_0 z^{i}_1 \cdots z_{n_(z^{i})}^i] \leq K c^{1+\log(\gamma)/\log(\lambda)}  \epsilon^{-\log(\gamma)/\log(\lambda)}.\end{equation} 

\end{proof}
Next we require the so called ``$D$-annular decay property'', that is there exists a constant $c_2>0$ such that for all $x\in J$ $\epsilon>0$ and $0<\delta<1$ we have that

\begin{equation}\label{Ddecay}\mu(B(x,\epsilon)\setminus B(x,(1-\delta)\epsilon))\leq c_2\delta^D\mu(B(x,\epsilon)).\end{equation}

A related condition is the ``doubling'' or ``Federer'' property, namely there exists a constant $K>1$ such that for all $x\in J$ and $\epsilon>0$ we have

\begin{equation}\nonumber \mu(B(x,2\epsilon)) \leq K \mu(B(x,\epsilon)).\end{equation}
	
Evidently, a measure that satisfies the $D$-annular decay property also satisfies the doubling property.  The converse was shown by Buckley in \cite{Buc99}[Cor 2.2].  In the context of an equilibrium state $\mu$ supported on a conformal repeller Pesin and Weiss \cite{PesWei97} showed that $\mu$ satisfies the doubling property.  We collect these two results in the following proposition:	

\begin{prop}There exists a $D$ such that $\mu$ satisfies the $D$-annular decay property.\end{prop}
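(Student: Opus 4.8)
The plan is to obtain the $D$-annular decay property as the combination of two facts already recorded above: that $\mu$ is a doubling (Federer) measure on $J$, and that on a metric measure space with adequate geometry the doubling property self-improves to annular decay. First I would invoke the theorem of Pesin and Weiss \cite{PesWei97}: since $\mu$ is the equilibrium state of a H\"older potential on a conformal repeller $(J,f)$ with $f\in C^{1+\alpha}$, the bounded distortion property of $f$ together with the Gibbs inequality (\ref{gibbsoriginal}), transported from $\Sigma$ to $J$ via the semiconjugacy $\pi$ and the Moran cover of a Markov partition, produces a constant $K>1$ with $\mu(B(x,2\epsilon))\le K\mu(B(x,\epsilon))$ for all $x\in J$ and $\epsilon>0$. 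Concretely, one compares $\mu(B(x,\epsilon))$ with the $\mu$-mass of the boundedly many Moran-cover cylinders meeting $B(x,\epsilon)$, exactly as in the proof above that $\mu(B(z,\epsilon))\le c_1\epsilon^{s}$; the number of such cylinders and the distortion are bounded independently of $x$ and $\epsilon$.

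Second, I would apply Buckley's result \cite{Buc99}[Cor 2.2], which upgrades a doubling measure to one satisfying the $D$-annular decay property (\ref{Ddecay}) for some $D=D(K)>0$, provided the underlying metric space meets the chain/quasiconvexity hypothesis of that corollary. The constant $D$ it returns is uniform in $x$ and $\epsilon$, which is all that (\ref{Ddecay}) asserts, so modulo the hypothesis check this closes the argument.

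The main obstacle is precisely that last hypothesis check: when $J$ is totally disconnected — as in the middle-third Cantor set example of Section~2 — $(J,d)$ is not a length space, so one cannot cite the ``length-space'' form of the annular-decay theorem verbatim. I would handle this in one of two ways. The first is to verify Buckley's weaker geometric hypothesis directly from the structure of a conformal repeller: uniform perfectness of $J$ and the bounded-distortion arrangement of Markov pieces at every scale. The second, which I regard as more robust, is to bypass Buckley entirely and estimate the annulus $B(x,\epsilon)\setminus B(x,(1-\delta)\epsilon)$ by hand: cover it by Moran-cover elements of the appropriate scale, bound their number and their $\mu$-mass through bounded distortion and the Gibbs property (\ref{gibbsoriginal}), and sum. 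This direct route yields an explicit $D$ depending only on the distortion constants of $f$ and the H\"older data of $\phi$, and it is the version I would ultimately write out if a referee pressed on the applicability of \cite{Buc99}.
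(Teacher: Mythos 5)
Your proposal takes exactly the route the paper does: the paper offers no argument beyond ``collecting'' the two cited results, namely Pesin--Weiss \cite{PesWei97} for the doubling property of equilibrium states on conformal repellers and Buckley \cite{Buc99}[Cor 2.2] for the upgrade from doubling to $D$-annular decay. Your additional caveat about verifying Buckley's geometric hypotheses on a totally disconnected repeller (and your fallback direct Moran-cover estimate) is a reasonable extra precaution, but it goes beyond anything the paper itself addresses.
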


We now prove Theorem \ref{escratesexp}

\begin{proof}We first prove the result if $z \in J$ is not periodic, we first observe that the map $\pi$ that a consequence of Proposition \ref{Bowen1} we have that $\pi^{-1}\{z\}=\{ z^1,z^2,\ldots,z^r\}$, further Corollary \ref{Bowen2} implies that each $z^i$ is non-periodic.  

	Hence to show Theorem \ref{escratesexp} it suffices to show that $\lim_{\epsilon\to 0}\frac{r_{\tilde{\mu}}(\pi^{-1}(B(z,\epsilon)))}{\tilde{\mu}(\pi^{-1}(B(z,\epsilon)))}=1$.  First, we observe that Theorem \ref{escrates} may be modified to accommodate multiple non-periodic points appearing in the intersection, this modification is trivial and we therefore omit the proof.  For non-periodic points the new hypotheses become: 

	\begin{enumerate}\item Let $\{V_{n}\}$ be a family of nested sets with each $V_n$ being a finite union of cylinders.  Suppose further that $\bigcap_{n\geq 1}V_n$ consists of finitely many non-periodic points $\{ z^1,z^2,\ldots,z^r\}$. \label{item1}
	\item There exists constants $c>0$ and $0<\rho<1$ such that $\tilde{\mu}(V_n)\leq c\rho^{k_n}$ for all $n\geq 1$, here $k_n$ denotes the maximum length of a cylinder in $V_n$. \label{item2}
	\item There exists a sequence $(l_n)_n$, and constant $\kappa>0$ such that $\kappa<l_n/k_n\geq 1$ and $V_n\subset \cup_{i=1}^{r}[z^i]_{l_n}$ for all $n\geq 1$. \label{item3}\end{enumerate}

	If the sets $\{V_n\}_n$ satisfy these hypotheses then we conclude 

	\begin{equation}\nonumber \lim_{n\to\infty}\frac{r_{\tilde{\mu}}(V_n)}{\tilde{\mu}(V_n)}=1.\end{equation}
	
	We first prove the theorem for the case that $z\in J$ is non-periodic.  For $\epsilon>0$ and a positive integer $k$ we set 
	\begin{equation}\nonumber U_{k,\epsilon}=\left\{U\in\bigvee_{i=0}^{k-1}f^{-i}\mathcal{R}\,:\,U\cap B(z,\epsilon)\neq\emptyset\right\}.\end{equation} 
	
We observe that due to $f$ being uniformly expanding, there exist constants $c_3>0$ and $0<\rho<1$ such that 
\begin{equation}\nonumber {\rm diam}(U)\leq c_3\rho^k\end{equation}
for any $U\in U_{k,\epsilon}$.  

Let $\delta_k=\frac{c_3\rho^k}{\epsilon+c_3\rho^k}$, in which case it is easy to see that 

\begin{equation}\nonumber \bigcup_{U\in U_{k,\epsilon}}U\subset B(z,\epsilon+c_3\rho^k)=B(z,(1-\delta_k)^{-1}\epsilon).\end{equation}

Fix $\eta>0$ small and choose $k=k(\epsilon,\eta)$ such that $\rho^k\leq \frac{\epsilon}{c_3((c_2\eta^{-1})^{1/D}-1)} < \rho^{k-1}$, in which case we see

\begin{eqnarray}\label{delta1} (1-\eta)\mu(\cup_{U\in U_{k,\epsilon}}U) &\leq & (1-c_2\delta_k^D)\mu(\cup_{U\in U_{k,\epsilon}}U) \\
 \nonumber & \leq & (1-c_2\delta_k^D)\mu(B(z,(1-\delta_k)^{-1}\epsilon)) \\
\nonumber & \leq & \mu(B(z,\epsilon)).\end{eqnarray}

Where the $D$-annular decay property was used on the final line.  Now let $\{\epsilon_n\}_n$ be any monotonic sequence with $\epsilon_n\to 0$ and set 

\begin{equation}\nonumber U_n=\bigcup_{U\in U_{k(\epsilon_n,\eta),\epsilon_n}} U.\end{equation}

Observing that $U_n$ is a finite union of $k_n:=k(\epsilon_n,\eta)$'th level refinement of the markov partition, there exists $V_n\subset\Sigma$, a finite union of cylinders of length $k_n$ such that $\pi(V_n)=U_n$.  

We claim that $V_n$ satisfies the hypotheses of the modified Theorem \ref{escrates}.  Clearly the $V_n$ are nested (\ref{item1}), so it suffices to show that $\tilde{\mu}(V_n)$ decays exponentially in $n$.  To see this we observe that
\begin{eqnarray}\nonumber \tilde{\mu}(V_n) &=& \mu(U_n) \\
 \nonumber &\leq & (1-\eta)^{-1} \mu(B(z,\epsilon_n) \\
\nonumber & \leq & c_1 \epsilon_n^s \leq c_1(c_3((c_2\eta^{-1})^{1/D}-1))^s \rho^{s(k_n-1)}.
\end{eqnarray}
And thus, we see that $\tilde{\mu}(V_n)$ decreases exponentially in $k_n$, which proves (\ref{item2}).  

As $f$ is conformal and $z\in [z^{i}]_l$ for all $i$ and $l$ there exists a constant $c_4>0$ and $0<\varrho<1$ such that for any $i\in \{1,2,\ldots,r\}$ and $l\in\mathbb{N}$ we have that $c_4^{-1}\leq{\rm diam}(\pi[z^i]_l)/\varrho^l$.  Let $l_n$ be the minimum such $l$ such that $c_4^{-1}\varrho^l\geq 2\epsilon_n$.  It is easy to see that for such a choice of $l$ we have that $V_n\subset \cup_{i=1}^{r}[z^i]_{l_n}$.  In addition, we have that $l_n >c_5 k_n$ for some constant $c_5>0$, this proves (\ref{item3}).  Thus we deduce from the modified Theorem \ref{escrates}

\begin{equation}\nonumber \lim_{n\to\infty}\frac{r_{\tilde{\mu}}(V_n)}{\tilde{\mu}(V_n)}=1.\end{equation}

And so by monotonicity of escape rates and equation (\ref{delta1}) we see that

\begin{equation}\label{upperbound} \limsup_{n\to \infty}\frac{r_{\mu}(B(z,\epsilon_n))}{\mu(B(z,\epsilon_n))}\leq (1-\eta)^{-1}\limsup_{n\to\infty} \frac{r_{\mu}(U_n)}{\mu(U_n)} = (1-\eta)^{-1}\limsup_{n\to \infty}\frac{r_{\tilde{\mu}}(V_n)}{\tilde{\mu}(V_n)} =(1-\eta)^{-1}.\end{equation}

Similarly, using the same method we may obtain a lower bound, which in conjunction with equation (\ref{upperbound}), gives 
\begin{equation}\nonumber \lim_{n\to \infty}\frac{r_\mu(B(z,\epsilon_n))}{\mu(B(z,\epsilon_n))}=1.\end{equation}

We now turn our attention to the case where $z$ is periodic.  By Lemma \ref{markovperiodic} we may assume that $\pi^{-1}(z)$ consists of a single point of prime period $p$ say $\pi(z^\prime)=z$.  

As before we approximate $B(z,\epsilon)$ from outside using elements of $\bigvee_{i=0}^{k-1}f^{-i}\mathcal{R}$, which may be thought of as cylinders of length $k$ in a subshift of finite type.  Recall the hypotheses for Theorem \ref{escrates}:

\begin{enumerate}\item Let $\{V_{n}\}$ be a family of nested sets with each $V_n$ being a finite union of cylinders.  Suppose further that $\bigcap_{n\geq 1}V_n=\{z^\prime\}$, where $z^\prime$ has prime period $p$.
\item There exists constants $c>0$ and $0<\rho<1$ such that $\tilde{\mu}(V_n)\leq c\rho^{k_n}$ for $n=1,2,\ldots$, here $k_n$ denotes the maximum length of a cylinder in $V_n$.
\item For each $n\geq 1$ we have that $\sigma^{-p}(V_n)\cap[z^\prime_0 z^\prime_1 \cdots z^\prime_{p-1}]\subset V_n$. \end{enumerate}   

In which case we deduce from Theorem \ref{escrates} that \begin{equation}\nonumber \lim_{n\to\infty}\frac{r_{\tilde{\mu}}(V_n)}{\tilde{\mu}(V_n)}=1-e^{\tilde{\phi}^p(z^\prime)}.\end{equation} 

We first approximate $B(z,\epsilon_n)$ from outside using the same method employed previously.  For $\eta>0$ we obtain $U_n\supset B(z,\epsilon_n)$ nested, each being a finite union of elements from $\bigvee_{i=0}^{k(n)-1}f^{-i}\mathcal{R}$ for some $k$, with the property that $\mu(U_n)\leq (1-\eta)^{-1}\mu(B(z,\epsilon_n))$.  As before, we may find a $V_n\subset \Sigma$ which is a finite union of cylinders.  It is easy to see that $V_n$ satisfy conditions (1) and (2).  To see (3) we observe that for $\epsilon_n$ small expansivity of $f$ and 
the fact that $z$ has prime period $p$ yields $f^{-p}(B(z,\epsilon_n))\cap \pi[z_0^\prime,z_1^\prime,\ldots,z_{p-1}^\prime]\subset B(z,\epsilon_n)$.  A simple argument extends this to approximations of balls centred on $z$.  Using monotonicity of escape rates together with the conclusions of Theorem \ref{escrates} yields

	\begin{eqnarray}\nonumber \limsup_{n\to \infty}\frac{r_{\mu}(B(z,\epsilon_n))}{\mu(B(z,\epsilon_n))} &\leq & (1-\eta)^{-1}\limsup_{n\to \infty} \frac{r_{\mu}(U_n)}{\mu(U_n)}\\ 
	\nonumber &=& (1-\eta)^{-1}\limsup_{n\to \infty}\frac{r_{\tilde{\mu}}(V_n)}{\tilde{\mu}(V_n)} \\\nonumber &=& (1-\eta)^{-1}(1-e^{\tilde{\phi}^p}(z^\prime))\\
	\label{upperbound1} &=& (1-\eta)^{-1}(1-e^{\phi^p(z)}).\end{eqnarray}

Similarly, using the same method we may obtain a lower bound, which in conjunction with equation (\ref{upperbound1}), we see that 
	\begin{equation}\nonumber \lim_{n\to \infty}\frac{r_\mu(B(z,\epsilon_n))}{\mu(B(z,\epsilon_n))}=1-e^{\phi^p(z)}.\end{equation}\end{proof}

\section{Proof of Theorem \ref{dimension}}

In this section we study the asymptotic behaviour of the Hausdorff dimension of the non-trapped set.  Let $f:J\to J$ be a conformal repeller as defined in the previous section, we make the further assumption that $f\in C^{1+\alpha}(J)$ for some $\alpha>0$.  Fix $z\in J$, for $\epsilon>0$ we define

\begin{equation}\nonumber J_{\epsilon}=\{x\in J\,:\,f^{k}(x)\not\in B(z,\epsilon), \text{for all }k\geq 0\},\end{equation}
i.e. all points whose orbits are $\epsilon$-bounded away from $z$.

Let $\mu$ denote the equilibrium state related to the potential $\psi=-s\log|f^\prime|$, where $s={\rm dim}_H(J)$.  As before we may study the escape rate $r_{\mu}(B(z,\epsilon))$ of $\mu$ through $B(z,\epsilon)$ and it's associated asymptotic, i.e.

\begin{equation}\nonumber d_\phi(z):=\lim_{\epsilon\to 0}\frac{r_{\mu}(B(z,\epsilon))}{\mu(B(z,\epsilon))}.\end{equation}
	
The method of proof is as follows:  in a similar vein to the proof of Theorem \ref{escratesexp}, we first prove the result where the hole consists of a finite union of refinements of the Markov partition, then extend it to the case of geometric balls via an approximation argument. 

 Let $\mathcal{R}=\{R_1,R_2,\ldots,R_m\}$ denote a Markov partition for the conformal repeller $J$, this induces a semi-conjugacy $\pi$ between a subshift of finite type $(\Sigma,\sigma)$ and the conformal repeller $(J,f)$.   Let $I_n\in\bigvee_{i=0}^{n-1}f^{-j}\mathcal{R}$ be a nested family such that $\cap_{n\geq 0} I_n=\{z\}$.  We let $J_n$ denote the set of points in $J$ which do not fall down the hole $I_n$, i.e.

\begin{equation}\nonumber J_n=\{x\in J\,:\,f^{k}(x)\not\in I_n, \text{for all }k\geq 0\}.\end{equation}
 
 Let $s_\epsilon$ denote the Hausdorff dimension of the set $J_\epsilon$. 

 \begin{prop}Under the assumptions above 
 \begin{equation}\nonumber \lim_{n\to\infty}\frac{s-s_n}{\mu(I_n)}=\frac{d_\phi(z)}{\int \log |f^\prime| d\mu}\end{equation}\label{dimensiondiscrete}
 \end{prop}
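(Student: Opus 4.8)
The plan is to exploit Bowen's equation, which identifies the Hausdorff dimension of a conformal repeller (and of the subrepellers $J_n$) as the unique zero of a pressure function. Writing $P(t) = P(-t\log|f'|)$ for the pressure on the full repeller and $P_n(t) = P_{\Sigma_n}(-t\log|f'|)$ for the pressure restricted to the surviving set $\Sigma_n$ (equivalently $J_n$), Bowen's formula gives $P(s) = 0$ and $P_n(s_n) = 0$. Since $P$ is strictly decreasing and real-analytic in $t$ with $P'(s) = -\int\log|f'|\,d\mu < 0$ (here $\mu$ is the equilibrium state for $\psi = -s\log|f'|$), a first-order Taylor expansion around $t = s$ gives
\begin{equation}\nonumber s - s_n = \frac{P(s_n) - P_n(s_n)}{-P'(s)} + o\!\left(s - s_n\right) = \frac{P(s_n) - P_n(s_n)}{\int\log|f'|\,d\mu}\bigl(1 + o(1)\bigr),\end{equation}
so that $\frac{s - s_n}{\mu(I_n)} = \frac{1}{\int\log|f'|\,d\mu}\cdot\frac{P(s_n) - P_n(s_n)}{\mu(I_n)}(1+o(1))$.

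The next step is to pass from the numerator $P(s_n) - P_n(s_n)$, evaluated at the moving exponent $s_n$, to the quantity $P(s) - P_n(s)$ evaluated at the fixed exponent $s$, to which Theorem \ref{toppres} directly applies. This requires two ingredients: first, that $s_n \to s$ as $n\to\infty$ (which follows from Proposition \ref{prelconv} applied to the potentials $-t\log|f'|$, or directly from the fact that $J_n \uparrow J$ up to a set of dimension $<s$); and second, a Lipschitz-type control showing $|P(s_n) - P_n(s_n) - (P(s) - P_n(s))| = o(\mu(I_n))$. The latter is where one uses that both $P$ and $P_n$ are differentiable in $t$ with derivatives $-\int\log|f'|\,d\nu$ for the respective equilibrium states, and that these derivatives are uniformly bounded (above and below, away from zero) in $n$ by the uniform expansion and H\"older bounds; combined with $s - s_n = O(\mu(I_n))$ (a consequence of Theorem \ref{toppres} together with the crude Lipschitz bound), the difference of the two pressure gaps is $O((s-s_n)^2) = O(\mu(I_n)^2) = o(\mu(I_n))$. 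One then invokes Theorem \ref{toppres} with the H\"older potential $\phi = \psi = -s\log|f'|$:
\begin{equation}\nonumber \lim_{n\to\infty}\frac{P(\psi) - P_{\Sigma_n}(\psi)}{\mu(I_n)} = d_\phi(z),\end{equation}
noting $P(\psi) = 0$ so the numerator is exactly $P(s) - P_n(s)$, and $\mu(I_n)$ here is measured with respect to the equilibrium state for $\psi$, which is precisely our $\mu$.

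Assembling the pieces: $\frac{s - s_n}{\mu(I_n)} = \frac{1}{\int\log|f'|\,d\mu}\cdot\frac{P(s) - P_n(s)}{\mu(I_n)}(1 + o(1)) \to \frac{d_\phi(z)}{\int\log|f'|\,d\mu}$, which is the claim. The main obstacle I anticipate is the uniformity in the transfer from the moving exponent $s_n$ to the fixed exponent $s$ — that is, making rigorous that the error incurred is genuinely $o(\mu(I_n))$ rather than merely $O(\mu(I_n))$; this needs the quadratic smallness $s - s_n = O(\mu(I_n))$ as an a priori bound before bootstrapping to the sharp asymptotic, and it needs the equilibrium-state derivatives $\int\log|f'|\,d\nu_{n,t}$ to be uniformly bounded away from $0$ and $\infty$ for $t$ near $s$ and all large $n$, which should follow from the uniform expansion constant $\lambda$ and the bounded distortion of conformal repellers but deserves to be stated carefully.
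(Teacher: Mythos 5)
Your proposal is correct and takes essentially the same route as the paper: a two-step Taylor expansion of the pressure around $t=s$ (the paper works with the leading eigenvalue $\lambda_{t,n}$ rather than $P_n=\log\lambda_{\cdot,n}$, which is a cosmetic difference), first with a crude remainder to get the a priori bound $s-s_n=O(\mu(I_n))$, then bootstrapping via a second-order expansion while feeding in the known limit of the pressure gap at the fixed exponent $s$ together with convergence of first derivatives and uniform control of second derivatives. The uniformity issues you correctly flag at the end are exactly what Lemmas \ref{derpres} and \ref{secderpres} supply; note that your $O\bigl((s-s_n)^2\bigr)$ estimate for the transfer error from $s_n$ to $s$ is stronger than the paper establishes (it would require a rate on $P_n'\to P'$), but the weaker $o(\mu(I_n))$, which is all you need, follows from pointwise convergence of $P_n'$ plus the equicontinuity coming from the uniform second-derivative bound.
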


A cruicial ingredient to the proof of Proposition \ref{dimensiondiscrete} is the following result of Ruelle \cite{Rue82}.

\begin{prop}[Ruelle]Let $s\geq 0$ be the unique real number for which $P(-s\log|f^{'}|)=0$, then $\text{dim}_H(J)=s$.
	\end{prop}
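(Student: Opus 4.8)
The plan is to work through the symbolic model $\pi:\Sigma\to J$ associated with a Markov partition $\mathcal{R}=\{R_1,\ldots,R_m\}$, exploiting the bounded distortion property: since $f\in C^{1+\alpha}$ and $f|_J$ is conformal, there is a constant $c>1$ so that for every cylinder $[x]_n$ and all $y,w\in\pi[x]_n$ one has $c^{-1}\leq |(f^n)'(y)|/|(f^n)'(w)|\leq c$, and moreover ${\rm diam}(\pi[x]_n)$ is comparable, up to a multiplicative constant independent of $n$ and $x$, to $|(f^n)'(y)|^{-1}$ for any $y\in\pi[x]_n$. Writing $\phi_t=-t\log|f'|$ and $\tilde\phi_t=\phi_t\circ\pi$, one first notes by standard thermodynamic formalism that $t\mapsto P(\phi_t)=P(\tilde\phi_t)$ is continuous and strictly decreasing (strictness because $\int\log|f'|\,d\nu>0$ for every invariant $\nu$ by uniform expansion), with $P(\phi_0)=h_{top}(f)>0$ and $P(\phi_t)\to-\infty$ as $t\to\infty$; hence $s$ is well defined. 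It then suffices to establish ${\rm dim}_H(J)\leq s$ and ${\rm dim}_H(J)\geq s$ separately.

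For the upper bound, fix $t>s$ and for each $n$ cover $J$ by the finitely many sets $\{\pi[i]_n\}_{|i|=n}$, whose diameters tend to $0$ uniformly as $n\to\infty$. By bounded distortion,
\begin{equation}\nonumber
\sum_{|i|=n}{\rm diam}(\pi[i]_n)^t\leq c\sum_{|i|=n}e^{\tilde\phi_t^n(iu)}=c\,Z_n(\tilde\phi_t)
\end{equation}
for a fixed $u\in\Sigma$, where $Z_n$ is the partition function of Section 3, satisfying $\tfrac1n\log Z_n(\tilde\phi_t)\to P(\phi_t)<0$. Hence these sums tend to $0$, so $\mathcal{H}^t(J)=0$ and ${\rm dim}_H(J)\leq t$; letting $t\downarrow s$ gives ${\rm dim}_H(J)\leq s$.

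For the lower bound, let $\mu=\pi_*\tilde\mu$, where $\tilde\mu$ is the equilibrium state of $\tilde\phi_s$ on $\Sigma$. The Gibbs property (\ref{gibbsoriginal}), together with $P(\phi_s)=0$ and bounded distortion, gives a constant $c>1$ with
\begin{equation}\nonumber
c^{-1}\,{\rm diam}(\pi[x]_n)^s\leq \mu(\pi[x]_n)\leq c\,{\rm diam}(\pi[x]_n)^s
\end{equation}
for every cylinder. Using the Moran cover of $J$ at scale $\epsilon$ associated with $\mathcal{R}$ — exactly as in the proof of the estimate $\mu(B(z,\epsilon))\leq c_1\epsilon^s$ in Section 6 — each ball $B(x,\epsilon)$ meets at most $K$ Moran elements $U_i=\pi[z^i]_{n(z^i)}$ with ${\rm diam}(U_i)\leq\epsilon$, whence $\mu(B(x,\epsilon))\leq Kc\,\epsilon^s$ with $K,c$ independent of $x$ and $\epsilon$. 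The mass distribution principle then yields ${\rm dim}_H(J)\geq s$, completing the argument.

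The main obstacle is the geometric bookkeeping in the lower bound: converting the clean estimate on cylinder measures into the Frostman-type bound $\mu(B(x,\epsilon))\leq Kc\,\epsilon^s$ uniformly in $x$. This relies on the bounded-geometry/Moran-cover properties of the Markov partition (finitely many shapes, uniformly bounded overlap multiplicity, comparability of the stopping-time cylinders with the scale $\epsilon$), which is exactly where conformality and the $C^{1+\alpha}$ hypothesis are used; the upper bound and the properties of the pressure function are comparatively routine.
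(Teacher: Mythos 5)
The paper offers no proof of this proposition --- it is quoted from Ruelle \cite{Rue82} and used as a black box in Section 7 --- so there is nothing internal to compare your argument against. Your sketch is the standard Bowen--Ruelle argument (cylinder covers plus negativity of the pressure above $s$ for the upper bound; the Gibbs property, a Moran cover and the mass distribution principle for the lower bound) and it is sound; the one detail worth making explicit is that $\mu(\pi[x]_n)=\tilde\mu([x]_n)$ only up to the Markov-partition boundary $\bigcup_i\partial R_i$, which has measure zero for every Gibbs state --- the same fact the paper invokes when asserting that $\pi$ is one-to-one on a set of full measure.
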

 
Let $\tilde{\phi}(x):=-\log|f^\prime(\pi(x))|$ it is easy to see that the semi-conjugacy $\pi$ being one-one on a set of full measure for all equilibrium states for H\"older potentials implies that the Hausdorff dimension of $J$ is the unique real number $s$ for which $P(s\tilde{\phi})=0$.  As similar argument shows that the ${\rm dim_H}(J_n)=s_n$ where $s_n$ is the unique real number satisfying $P_{\Sigma_n}(s_n\tilde{\phi})=0$.  We may therefore translate the problem into the language of subshifts of finite type.  As the family $\{I_n\}$ is nested there exists a point $z^\prime\in\Sigma$ such that $\pi[z^\prime]=I_n$. Accordingly, if we set

\begin{equation}\nonumber \Sigma_n=\{x\in\Sigma\,:\,\sigma^k(x)\not\in [z^\prime]_n\text{ for }k=0,1,2,\ldots\}\end{equation}
then $\pi(\Sigma_n)=J_n$.  Let $\tilde{\phi}(x)=-\log|f^\prime(\pi(x))|$, then it is easy to see that the semi-conjugacy $\pi$ being one-one on a set of full measure for all equilibrium states for H\"older potentials implies that the Hausdorff dimension of $J$ is the unique real number $s$ for which $P(s\tilde{\phi})=0$.  As similar argument shows that the ${\rm dim_H}(J_n)=s_n$ where $s_n$ is the unique real number satisfying $P_{\Sigma_n}(s_n\tilde{\phi})=0$.  We therefore may prove the result in the setting of subshifts of finite type.

For $t\geq 0$ we let $\mathcal{L}_t:\mathcal{B}_\theta\to\mathcal{B}_\theta$ denote the transfer operator associated with the potential $t\tilde{\phi}$, i.e.,

\begin{equation}\nonumber (\mathcal{L}_t w)(x)=\sum_{\sigma(y)=x} \frac{w(y)}{|f^\prime(\pi(y))|^t},\end{equation}
analogously we define the perturbed transfer operator $\mathcal{L}_{t,n}:\mathcal{B}_\theta\to\mathcal{B}_\theta$ to be $(\mathcal{L}_{t,n}w)(x)=(\mathcal{L}_{t}\chi_{[z^\prime]_n^c}w)(x)$.  We let $g_t$ (resp. $g_{t,n}$) and $\nu_t$ (resp. $\nu_{t,n}$) denote the eigenfunction and eigenmeasures guaranteed by Proposition \ref{ruelleopthm} applied to $\mathcal{L}_t$ (resp. $\mathcal{L}_{t,n}$ ).  We shall assume without loss of generality that $\int g_t d\nu_t=\int g_{t,n} d\nu_{t,n}=1$ for all $t\geq 0$ and $n\geq 1$.  The associated equilibrium states will be denoted by $\mu_t$ and $\mu_{t,n}$, observing that one can show that $d\mu_t=g_t d\nu_t$ (resp. $d\mu_{t,n}=g_{t,n} d\nu_{t,n}$).  We proved earlier that both $\mathcal{L}_t$ and $\mathcal{L}_{t,n}$ have spectral gaps, we denote their maximal eigenvalues by $\lambda_t$ and $\lambda_{t,n}$ respectively.  As $\log(\lambda_t)=P(t\tilde{\phi})$ (resp.   $\log(\lambda_{t,n})=P_{\Sigma_n}(t\tilde{\phi})$), the problem of finding the Hausdorff dimensions of $J$ (resp. $J_n$) reduces to  finding the values of $t$ (resp. $t_n$) such that $\lambda_{t}=1$ (resp. $\lambda_{t_n,n}=1$).

The proof of Proposition \ref{dimensiondiscrete} relies on a few elementary facts: the maps $t:\mapsto\lambda_{t,n}$ are analytic and non-increasing in $t$, while for a fixed $t$ the sequence $\{\lambda_{t,n}\}_n$ is increasing (and converges to $\lambda_t$), we use Taylor's theorem applied to $\lambda_{t,n}$ about $t=s$ to obtain an approximation of $\lambda_{t,n}$ close to $\lambda_{s,n}$, we then use Theorem \ref{escrates} and let $n\to\infty$ to prove the result.  The main problem then reduces to analysing the behaviour of the first $\lambda_{t,n}^\prime=d/dt(\lambda_{t,n})$ and second $\lambda_{t,n}^{\prime\prime}=d^2/dt^2(\lambda_{t,n})$ derivatives of $\lambda_{t,n}$ which is the focus of the following two technical lemmas.

\begin{lemma}For any $t\geq 0$ we have that $\lim_{n\to\infty} \lambda_{t,n}^\prime=\lambda_{t}^\prime.$\label{derpres}
	\end{lemma}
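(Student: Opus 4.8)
The plan is to reduce the statement to a convexity argument about the pressure functions $t\mapsto\log\lambda_{t,n}$. First I would record the standard formula for the derivative of a simple leading eigenvalue coming from analytic perturbation theory. Since the family $t\mapsto\mathcal{L}_{t,n}$ is analytic (the $t$-dependence enters only through the weights $|f'\circ\pi|^{-t}$, while the cut-off $\chi_{[z']_n^c}$ is $t$-independent) and $\lambda_{t,n}$ is an isolated simple eigenvalue with a spectral gap, $t\mapsto\lambda_{t,n}$ is analytic; differentiating $\mathcal{L}_{t,n}$ gives $\partial_t\mathcal{L}_{t,n}w=\mathcal{L}_{t,n}(\tilde\phi w)$, so using the normalisation $\int g_{t,n}\,d\nu_{t,n}=1$ together with $\mathcal{L}_{t,n}^{*}\nu_{t,n}=\lambda_{t,n}\nu_{t,n}$ and $d\mu_{t,n}=g_{t,n}\,d\nu_{t,n}$,
\[
\lambda_{t,n}'=\int \bigl(\partial_t\mathcal{L}_{t,n}\bigr) g_{t,n}\,d\nu_{t,n}=\int \mathcal{L}_{t,n}\bigl(\tilde\phi\, g_{t,n}\bigr)\,d\nu_{t,n}=\lambda_{t,n}\!\int\tilde\phi\, g_{t,n}\,d\nu_{t,n}=\lambda_{t,n}\!\int\tilde\phi\,d\mu_{t,n}.
\]
The same computation for $\mathcal{L}_t$ yields $\lambda_t'=\lambda_t\int\tilde\phi\,d\mu_t$. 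Equivalently $\frac{d}{dt}\log\lambda_{t,n}=\int\tilde\phi\,d\mu_{t,n}$ and $\frac{d}{dt}\log\lambda_{t}=\int\tilde\phi\,d\mu_{t}$, and since $\lambda_{t,n}\to\lambda_t>0$ by Proposition \ref{prelconv} (the single cylinders $[z']_n$ satisfy assumptions (1)--(5) with $l_n=n$, the exponential decay of $\mu([z']_n)$ being the Gibbs property), the lemma will follow once $\frac{d}{dt}\log\lambda_{t,n}\to\frac{d}{dt}\log\lambda_{t}$.

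Next I would invoke convexity. For each $n$ large enough that $\Sigma_n$ is topologically mixing, $t\mapsto\log\lambda_{t,n}=P_{\Sigma_n}(t\tilde\phi)$ is convex, being the supremum over $\sigma|_{\Sigma_n}$-invariant probability measures $\nu$ of the affine functions $t\mapsto h_{\nu}(\sigma)+t\int\tilde\phi\,d\nu$; likewise $t\mapsto\log\lambda_{t}=P(t\tilde\phi)$ is convex, and it is moreover real-analytic (hence $C^1$) in $t$ by the spectral gap for $\mathcal{L}_t$ and analytic perturbation theory. Applying Proposition \ref{prelconv} to the H\"older potential $t\tilde\phi$ for each fixed $t\ge 0$ gives $\log\lambda_{t,n}\to\log\lambda_{t}$ pointwise. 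Now use the elementary fact that a pointwise-convergent sequence of convex functions has derivatives converging at every point at which the limit is differentiable: from convexity,
\[
\frac{1}{h}\bigl(\log\lambda_{t,n}-\log\lambda_{t-h,n}\bigr)\le \frac{d}{dt}\log\lambda_{t,n}\le \frac{1}{h}\bigl(\log\lambda_{t+h,n}-\log\lambda_{t,n}\bigr),
\]
and letting $n\to\infty$ and then $h\to 0$ (using differentiability of $t\mapsto\log\lambda_t$) squeezes $\frac{d}{dt}\log\lambda_{t,n}\to\frac{d}{dt}\log\lambda_{t}$ for every $t\ge 0$. Combining with $\lambda_{t,n}\to\lambda_t$ gives $\lambda_{t,n}'=\lambda_{t,n}\cdot\frac{d}{dt}\log\lambda_{t,n}\to\lambda_{t}\cdot\frac{d}{dt}\log\lambda_{t}=\lambda_{t}'$.

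The perturbation-theoretic derivative formula and the convexity squeeze are routine; the point deserving care is verifying that Proposition \ref{prelconv} really applies to the cut-off cylinders $[z']_n$ simultaneously for the whole family $\{t\tilde\phi\}_{t\ge 0}$ (equivalently, that $P_{\Sigma_n}(t\tilde\phi)\to P(t\tilde\phi)$ for each fixed $t$), and that the limiting pressure $t\mapsto P(t\tilde\phi)$ is differentiable --- this being exactly what lets the convexity argument upgrade pointwise convergence of the pressures to convergence of their derivatives. An alternative to the convexity step would be to prove $\mu_{t,n}\to\mu_t$ weakly directly, via convergence of the spectral projections $E_{t,n}\to E_t$ supplied by the Keller--Liverani theorem applied with the potential $t\tilde\phi$; I would prefer the convexity route since it sidesteps any uniformity-in-$t$ bookkeeping.
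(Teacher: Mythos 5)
Your argument is correct, but it takes a genuinely different route from the paper. You share the first step --- differentiating the eigenfunction equation to get $\lambda_{t,n}'=\lambda_{t,n}\int\tilde\phi\,d\mu_{t,n}$ and $\lambda_t'=\lambda_t\int\tilde\phi\,d\mu_t$ --- but after that the paper does \emph{not} use convexity: it invokes the Keller--Liverani bound $\||E_{t,n}-E_t|\|\to 0$, establishes a uniform bound on $\|g_{t,n}\tilde\phi\|_\theta$, and deduces $E_{t,n}(g_{t,n}\tilde\phi)\to E_t(\tilde\phi)$ in $L^1$, i.e.\ it proves convergence of the integrals directly from spectral stability. Your convexity squeeze is more elementary: it needs only the pointwise convergence $\log\lambda_{t,n}\to\log\lambda_t$ from Proposition \ref{prelconv} and real-analyticity of the unperturbed pressure, and it sidesteps the norm bookkeeping entirely (convexity of $t\mapsto\log\lambda_{t,n}$ is cleanest via H\"older's inequality applied to the partition sums $Z_{k,n}(t\tilde\phi)$, which avoids even having to invoke the variational principle on the survivor subshift). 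Two small points to tidy: at $t=0$ the two-sided difference quotient requires extending to small negative $t$, which is harmless since $P(t\tilde\phi)$ and the whole spectral picture make sense for all real $t$; and you should say a word about why $\mu_t([z']_n)$ decays exponentially for each $t$ (the Gibbs property plus $P(t\tilde\phi)>\sup_\nu t\int\tilde\phi\,d\nu$, which uses $h_{top}(\sigma)>0$), so that Proposition \ref{prelconv} really applies. The trade-off is that the paper's heavier machinery delivers quantitative convergence of the spectral data ($g_{t,n}\to g_t$, $E_{t,n}\to E_t$), which is of the same kind as what is needed for the companion second-derivative bound in Lemma \ref{secderpres}; pointwise convergence of convex functions controls first derivatives but not second ones, so your shortcut, while valid here, does not replace the spectral argument elsewhere in the section.
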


\begin{proof}We first obtain an explicit formula for $\lambda^\prime_{t,n}$, to do this we follow an argument of Ruelle \cite{Rue04}[p 96. Ex 5.] to prove that for any $t\geq 0$ and $n=1,2,\cdots$
\begin{equation}\label{pertder} \lambda^\prime_{t,n}=-\lambda_{t,n}\int \log|f^\prime| d\mu_{t,n}.\end{equation}

Analogously for the unperturbed operator

\begin{equation}\label{unpertder} \lambda_{t}^\prime=-\lambda_{t}\int \log|f^\prime| d\mu_t.\end{equation}

To see this we take the eigenfunction equation

\begin{equation}\label{eigenfunctioneqn} L_{t,n}g_{t,n}=\lambda_{t,n} g_{t,n}.\end{equation}
	
Differentiating once yields

\begin{equation}\nonumber L_{t,n}^\prime g_{t.n}+L_{t,n} g_{t,n}^\prime=\lambda_{t,n}^\prime g_{t,n}+\lambda_{t,n} g_{t,n}^\prime,\end{equation}

and then integrating with respect to $\nu_{t,n}$ and cancelling terms yields

\begin{equation} \nonumber \lambda^\prime_{t,n} = \int \mathcal{L}_{t,n}^\prime (g_{t,n})d\nu_{t,n} = \int \mathcal{L}_{t,n}(\phi g_{t,n})d\nu_{t,n} 
= \lambda_{t,n}\int \phi d\mu_{t,n}\end{equation}
where $\phi=-\log|f^\prime|$.  This shows (\ref{pertder}), the proof of (\ref{unpertder}) is analogous and the proof is omitted.  

Without loss of generality we may assume that $g_t=1$, that is $\mathcal{L}_{t}1(x)=\lambda_t$.  We decompose the transfer operators $\mathcal{L}_{t,n}$ and $\mathcal{L}_t$ as 

\begin{equation} \nonumber \mathcal{L}_{t,n} = \lambda_{t,n}E_{t,n}+\Psi_{t,n} \,\,\,\, \mathcal{L}_t = \lambda_t E_t + \Psi_t \end{equation}
where $E_{t,n}$ and $E_t$ are projection operator given by 
\begin{equation} E_{t,n}w=\int w d\nu_{t,n} g_{t,n},\,\,\,\,\,\,\,\, E_{t}w=\int w d\nu_t \end{equation} and $\Psi_{t,n}$ (resp. $\Psi_t$) has a spectral radius strictly less than $\lambda_{t,n}$ (resp. $\lambda_t$).

From \cite{KelLiv99} we have that $\lim_{n\to\infty}\||E_{t,n}-E_{t}|\|=0$ and so

\begin{eqnarray}\label{L1converge} \|g_{t,n}-g_t\|_1 &\leq& \|g_{t,n}-g_t\|_w \\
	\nonumber &=& \|(E_{t,n}-E_t)(1)\|_w \leq \||E_{t,n}-E_{t}|\| \|1\|_s \to 0.\end{eqnarray}
	
Finally, to show that $\lambda_{t,n}^\prime\to\lambda_{t}^\prime$ it suffices to show that $E_{t,n}(g_{t,n}\phi)\to E_{t}(\phi)$.  We first show that there exists a constant $c>0$ such that $\|g_{t,n}\phi\|_{\theta,1}\leq c$ for all $n$.  We note that by \cite{KelLiv99}[Corollary 1] that there exists a constant $c>0$ and positive integer $N$ such that $\|E_{t,n}w\|_{\theta,1} \leq c\|E_t w\|_h$ for any $w\in\mathcal{B}_\theta$ and $n\geq N$. In which case 
\begin{eqnarray}\nonumber \|g_{t,n}\phi\|_{\theta,1} &=& |g_{t,n}\phi\|_{\theta,1}+\|g_{t,n}\phi\|_1 \\
	\nonumber &\leq & \|g_{t,n}\|_\infty |\phi|_{\theta,1}+|g_{t,n}|_{\theta,1}\|\phi\|_\infty+\|\phi\|_\infty\|g_{t,n}\|_1 \\
	\nonumber &\leq & 2\|\phi\|_{\theta,1}\|g_{t,n}\|_{\theta,1}+\|\phi\|_\infty\|g_{t,n}\|_w\\
	\nonumber &=& 2\|\phi\|_{\theta,1}\|E_{t,n}1\|_{\theta,1}+\|\phi\|_\infty\|g_{t,n}\|_h\\
	\nonumber &\leq & 2c\|\phi\|_{\theta,1} \|E_{t,n}1\|_h+\|\phi\|_\infty\|g_{t,n}\|_h \\
	\nonumber &=& (2c\|\phi\|_s+\|\phi\|_\infty)\|g_{t,n}\|_h.\end{eqnarray}
We observe that $\|g_{t,n}-1\|_1\to 0$, implies that $\|g_{t,n}-1\|_h\to 0$ and so $\|g_{t,n}\|_{\theta,1}$ is bounded.  Next, we note that

\begin{eqnarray}\nonumber \|E_{t,n}(g_{t,n}\phi)-E_t(\phi)\|_1 & = & |\|E_{t,n}-E_t\||\|g_{t,n}\phi\|_{\theta,1}+\|E_t(\phi(g_{t,n}-1))\|_1 \\
\nonumber & \leq & c|\|E_{t,n}-E_t\|| + \|\phi\|_\infty \|E_t\|_1\|g_{t,n}-1\|_1.\end{eqnarray}
Both terms tend to zero by equation (\ref{L1converge}) and \cite{KelLiv99}.  This completes the proof. 
	 \end{proof}

\begin{lemma}For any $s>0$ there exists $\delta>0$ such that $\sup_{n\geq 1}\sup_{t\in (s-\delta,s+\delta)} \lambda^{\prime\prime}_{t,n} < \infty$.\label{secderpres}\end{lemma}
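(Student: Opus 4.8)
The plan is to estimate $\lambda_{t,n}^{\prime\prime}$ through the Cauchy integral representation of the leading eigenprojection of $\mathcal{L}_{t,n}$, using the Keller--Liverani resolvent bounds made uniform in the parameter $t$ as well as in $n$. Write $\phi=-\log|f^\prime\circ\pi|$, so that $\mathcal{L}_{t,n}w=\mathcal{L}_t(\chi_{[z^\prime]_n^c}w)$ with $(\mathcal{L}_t w)(x)=\sum_{\sigma y=x}e^{t\phi(y)}w(y)$. The first point is that $t\mapsto\mathcal{L}_{t,n}$ extends to an entire $\mathcal{B}_\theta$-operator-valued function with $\frac{d^k}{dt^k}\mathcal{L}_{t,n}=\mathcal{L}_{t,n}M_{\phi^k}$, where $M_\psi w=\psi w$ and $\phi^k$ denotes the pointwise $k$-th power; since $\phi\in\mathcal{F}_\theta$ each $M_{\phi^k}$ is bounded on $\mathcal{B}_\theta$. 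The uniform Lasota--Yorke inequality of Lemma \ref{LasotaYorke} together with Lemma \ref{norms} gives $\|\mathcal{L}_{t,n}\|_\theta\leq c$; since the constants involved (Hölder norms of $t\phi$, Gibbs constants of $\mu_t$) vary continuously with $t$ and hence stay bounded for $t$ in a fixed compact interval $I=[s-\delta_0,s+\delta_0]$, this yields $\|\frac{d^k}{dt^k}\mathcal{L}_{t,n}\|_\theta\leq c_k$ uniformly for $t\in I$ and $n\geq1$; only $k\leq 2$ will be needed.

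Next I would verify that Lemmas \ref{weak}, \ref{LasotaYorke}, \ref{pert} and Proposition \ref{compact} hold with constants uniform for $t\in I$ (routine, as passing from $\phi$ to $t\phi$ only changes quantities varying continuously in $t$), and that $\||\mathcal{L}_s-\mathcal{L}_{t,n}|\|\leq\||\mathcal{L}_s-\mathcal{L}_t|\|+\||\mathcal{L}_t-\mathcal{L}_{t,n}|\|\to0$ as $t\to s$ and $n\to\infty$ (the first term by operator-norm continuity of $t\mapsto\mathcal{L}_t$ and Lemma \ref{norms}, the second by Lemma \ref{pert}). Then Keller--Liverani \cite{KelLiv99} applies with $\mathcal{L}_s$ as the unperturbed operator and $\{\mathcal{L}_{t,n}\}$ as the perturbations, furnishing uniform resolvent control exactly as in (\ref{KL1}). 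Combining this with $\lambda_{t,n}\to\lambda_t$ as $n\to\infty$ (Proposition \ref{prelconv}), $\lambda_t\to\lambda_s$ as $t\to s$, the essential spectral radius bound $\leq\theta$ (Lemma \ref{essspec}), and the fact that $\mathrm{spec}(\mathcal{L}_t)\setminus\{\lambda_t\}$ lies in a disc of radius $<\lambda_s$ for $t$ near $s$, one produces $\delta\in(0,\delta_0]$, a circle $\gamma$ centred at $\lambda_s$, and an integer $N$ such that for all $|t-s|<\delta$ and $n\geq N$ the curve $\gamma$ encloses the simple eigenvalue $\lambda_{t,n}$ and no other point of $\mathrm{spec}(\mathcal{L}_{t,n})$, and $M:=\sup\{\|(\zeta-\mathcal{L}_{t,n})^{-1}\|_\theta:\zeta\in\gamma,\ |t-s|<\delta,\ n\geq N\}<\infty$. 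Since $t\mapsto\lambda_{t,n}$ is real-analytic for each fixed $n$, the finitely many $n<N$ each contribute a finite $\sup_{|t-s|<\delta}\lambda_{t,n}^{\prime\prime}$, so it suffices to treat $n\geq N$.

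For $n\geq N$ set $E_{t,n}=\frac{1}{2\pi i}\oint_\gamma(\zeta-\mathcal{L}_{t,n})^{-1}\,d\zeta$; then $g_{t,n}=E_{t,n}1$ and $\mathcal{L}_{t,n}g_{t,n}=\lambda_{t,n}g_{t,n}$, so pairing with the eigenmeasure $\nu_s$ of $\mathcal{L}_s$ (which has full support and satisfies $|\nu_s(w)|\leq c\|w\|_1\leq c\|w\|_\theta$, since $d\nu_s=g_s^{-1}d\mu$ with $g_s$ bounded above and below) yields $\lambda_{t,n}=N(t,n)/D(t,n)$, where $N(t,n)=\frac{1}{2\pi i}\oint_\gamma\zeta\,\nu_s((\zeta-\mathcal{L}_{t,n})^{-1}1)\,d\zeta$ and $D(t,n)=\frac{1}{2\pi i}\oint_\gamma\nu_s((\zeta-\mathcal{L}_{t,n})^{-1}1)\,d\zeta$. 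Differentiating under the integral sign, using $\partial_t(\zeta-\mathcal{L}_{t,n})^{-1}=(\zeta-\mathcal{L}_{t,n})^{-1}\mathcal{L}_{t,n}^\prime(\zeta-\mathcal{L}_{t,n})^{-1}$ and its second-order analogue (a sum of products of at most three resolvents and at most two of $\mathcal{L}_{t,n}^\prime,\mathcal{L}_{t,n}^{\prime\prime}$), and bounding each factor by $M$ and the constants of the first paragraph (the contour $\gamma$ having finite length), I obtain $|N^{(j)}(t,n)|,|D^{(j)}(t,n)|\leq C$ for $j=0,1,2$, uniformly over $|t-s|<\delta$ and $n\geq N$. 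It then remains to bound $D(t,n)=\nu_s(g_{t,n})$ below: running the argument of (\ref{L1converge}) uniformly in $t\in I$ gives $\|g_{t,n}-g_t\|_1\to0$ as $n\to\infty$ uniformly in $t$, while $\|g_t-g_s\|_1\to0$ as $t\to s$, so $\nu_s(g_{t,n})\to\nu_s(g_s)=1$ and, after shrinking $\delta$ and enlarging $N$, $D(t,n)\geq\frac12$. The quotient rule $\lambda_{t,n}^{\prime\prime}=(N^{\prime\prime}D^2-2N^\prime D^\prime D-ND^{\prime\prime}D+2N(D^\prime)^2)/D^3$ together with these bounds gives $\sup_{n\geq N}\sup_{|t-s|<\delta}|\lambda_{t,n}^{\prime\prime}|<\infty$, which with the trivial bound for $n<N$ proves the lemma.

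The step I expect to be the main obstacle is the uniformity in $t$: checking that all constants in the Lasota--Yorke inequality, in the perturbation estimate $\||\mathcal{L}_t-\mathcal{L}_{t,n}|\|\leq c(\rho\theta^{-1})^n$, and in the weak-norm compactness are genuinely uniform for $t$ in a compact neighbourhood of $s$, and that the leading eigenvalue remains uniformly separated from the rest of the spectrum of $\mathcal{L}_{t,n}$ over the joint range of $(t,n)$ so that the single contour $\gamma$ works for all of them. Once this uniform Keller--Liverani input is secured, differentiating the Cauchy integral and invoking the quotient rule is purely mechanical.
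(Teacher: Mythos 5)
Your proposal is correct in outline, but it reaches the bound by a genuinely different route from the paper. The paper differentiates the eigenfunction equation $\mathcal{L}_{t,n}^N g_{t,n}=\lambda_{t,n}^N g_{t,n}$ twice, integrates against $\nu_{t,n}$, and lets $N\to\infty$ to obtain a Green--Kubo (asymptotic variance) formula
\begin{equation}\nonumber
\lambda_{t,n}^{-1}\lambda_{t,n}^{\prime\prime}=\|\phi\|_2^2+2\bigl(\lambda_{t,n}^{-1}\lambda_{t,n}^{\prime}\bigr)^2+\lim_{N\to\infty}\frac{2}{N}\sum_{k=0}^{N-1}(N-k)\lambda_{t,n}^{-k}\int \Psi_{t,n}^{k}(g_{t,n}\phi)\,\phi\, d\nu_{t,n},
\end{equation}
after using strong mixing of $\mu_{t,n}$ to kill the cross term involving $g_{t,n}^{\prime}$; the correlation sum is then bounded geometrically by the uniform decay $\|\Psi_{t,n}^{k}\|\leq cq^{k}$, i.e.\ by Proposition \ref{resolvantbound1} made uniform for $t$ near $s$. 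You instead represent the spectral projection by a Cauchy integral over a single contour $\gamma$ valid for all $(t,n)$ in the joint range, write $\lambda_{t,n}=N(t,n)/D(t,n)$, and differentiate under the integral sign. What each buys: the paper's computation produces an explicit dynamical variance formula (of independent interest, and reused in spirit elsewhere in the paper), at the cost of the $N\to\infty$ limit and the mixing argument; yours is purely mechanical perturbation theory once the uniform resolvent bound on $\gamma$ is secured, avoids differentiating eigenfunctions entirely, but needs the extra non-degeneracy input $D(t,n)=\nu_s(g_{t,n})\geq\tfrac12$, which you correctly supply via $\|g_{t,n}-g_s\|_1\to0$. Both arguments stand or fall on the same point you flag as the main obstacle --- uniformity of the Lasota--Yorke constants and of the Keller--Liverani resolvent bound in $t$ as well as in $n$, and the uniform separation of $\lambda_{t,n}$ from the rest of the spectrum --- and the paper itself disposes of this only in its terse final paragraph (via analyticity of $t\mapsto\mathcal{L}_t$), so your treatment is at the same level of rigour. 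One caveat worth making explicit if you write this up: the reference measure $\mu$ entering the norms $\|\cdot\|_1$, $|\cdot|_\theta$, $\|\cdot\|_h$ should be fixed (say at $t=s$) throughout, and the Gibbs-property comparisons in the Lasota--Yorke proof then pick up factors $e^{|t-s|k\|\phi\|_\infty}$ which must be absorbed by shrinking $\delta_0$ so that $e^{\delta_0\|\phi\|_\infty}\theta<1$; this is implicit in both your argument and the paper's.
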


\begin{proof}We first obtain an expression for $\lambda_{t,n}^{\prime\prime}$.  Fix a positive integer $N$, taking the eigenfunction equation ${\mathcal L}_{t,n}^Ng_{t,n}=\lambda^{N}_{t,n}g_{t,n}$ and differentiating twice, integrating with respect to $\nu_{t,n}$ and cancelling yields
	
\begin{eqnarray}\nonumber \lambda_{t,n}^{-1}\lambda_{t,n}^{\prime\prime}&=&\frac{1}{N}\left[\int(\phi^N)^2g_{t,n}d\nu_{t,n}-N(N-1)(\lambda_{t,n}^{-1}\lambda_{t,n}^\prime)^2\right]\\
	\nonumber  & & \,\,\,\,\,\,\,\,\,\,\,\,\,\,\,\,\,\,\,\,+2\left[\frac{1}{N}\int \phi^N g_{t,n}^\prime d\nu_{t,n}-\lambda_{t,n}^{-1}\lambda_{t,n}^\prime\int g_{t,n}^\prime d\nu_{t,n}\right].\end{eqnarray}	
	
We observe that as $d\mu_{t,n}=g_{t,n}d\nu_{t,n}$ is strong mixing that this second term tends to zero as $N\to\infty$, and thus 

\begin{eqnarray}\label{secondder} \lambda_{t,n}^{-1}\lambda_{t,n}^{\prime\prime}=\lim_{N\to\infty}\frac{1}{N}\left[\int(\phi^N)^2g_{t,n}d\nu_{t,n}-N(N-1)(\lambda_{t,n}^{-1}\lambda_{t,n}^\prime)^2\right].\end{eqnarray}

We now estimate the term $N^{-1}\int (\phi^N)^2 g_{t,n} d\nu_{t,n}$: expanding the term $(\phi^N)^2$ and using the dual identity $\mathcal{L}_{t,n}^{*}(\nu_{t,n})=\lambda_{t,n}\nu_{t,n}$ yields for $n$ large enough

\begin{eqnarray}\nonumber N^{-1}\int (\phi^N)^2 g_{t,n} d\nu_{t,n} &=& \sum_{i=0}^{N-1}\sum_{j=0}^{N-1} \int g_{t,n}\phi\circ\sigma^{i}\phi\circ\sigma^{j} d\nu_{t,n} \\
	\nonumber  &=& \|\phi\|_2^2+\frac{2}{N}\sum_{k=0}^{N-1}(N-k)\int g_{t,n}\phi\phi\circ\sigma^k d\nu_{t,n}\\
	\label{secondder1} &=& \|\phi\|_2^2+\frac{2}{N}\sum_{k=0}^{N-1}(N-k)\lambda_{t,n}^{-k}\int \mathcal{L}^k_{t,n}(g_{t,n}\phi)\phi d\nu_{t,n}.\end{eqnarray}
We apply the decomposition $L_{t,n}=\lambda_{t,n}E_{t,n}+\Psi_{t,n}$ along with Proposition \ref{resolvantbound1} to equation (\ref{secondder1}) to obtain 
\begin{eqnarray}\nonumber N^{-1}\int (\phi^N)^2 g_{t,n} d\nu_{t,n} &=& \|\phi\|_2^2+\frac{2}{N}\sum_{k=0}^{N-1}(N-k) \int E_{t,n}(g_{t,n}\phi)\phi  +\lambda_{t,n}^{-k} \Psi^k_{t,n}(g_{t,n}\phi)\phi d\nu_{t,n} \\
\label{secondder2} &=& \|\phi\|_2^2+ (N+1)\left(\int \phi g_{t,n} d\nu_{t,n}\right)^2 \\
\nonumber & & \,\,\,\,\,\,\,\,\,\,\,\,\,+  \frac{2}{N}\sum_{k=1}^{N-1}(N-k)\lambda_{t,n}^{-k} \int  \Psi^k_{t,n}(g_{t,n}\phi)\phi d\nu_{t,n}.\end{eqnarray} 

We note that $\left(\int \phi g_{t,n} d\nu_{t,n}\right)^2=(\lambda_{t,n}^{-1}\lambda_{t,n}^\prime)^2$ and so combining equations (\ref{secondder}) and (\ref{secondder2}) we obtain

\begin{equation}\nonumber \lambda_{t,n}^{-1}\lambda_{t,n}^{\prime\prime}=\|\phi\|_2^2+2(\lambda_{t,n}^{-1}\lambda_{t,n}^\prime)^2+\lim_{N\to\infty}\frac{2}{N} \sum_{k=0}^{N-1}(N-k)\lambda_{t,n}^{-k}\int  \Psi^k_{t,n}(g_{t,n}\phi)\phi  d\nu_{t,n}.\end{equation}
	
Finally we observe that the perturbation $t\mapsto\mathcal{L}_{t}$ is analytic and so for any $q>0$ such that $spec(\mathcal{L}_s)\setminus\{\lambda_s\}\subset B(0,q)$ there exists a positive integer $M$ and $\delta>0$ such that $\lambda_{t.n}>q$ and $spec(\mathcal{L}_{t,n})\setminus\{\lambda_{t,n}\}\subset B(0,q)$ for all $n\geq M$ and $t\in (s-\delta,s+\delta)$.  Combining this observation with Proposition \ref{resolvantbound1} completes the proof.\end{proof}

We now prove Proposition \ref{dimensiondiscrete}:

\begin{proof}We begin by proving that $s-s_n=O(\mu(I_n))$, to see this we observe the map $t\mapsto\lambda_{t,n}$ is analytic, and so using Taylor's theorem we may write 
	\begin{equation}\label{claim1} \lambda_{s_n,n}=1=\lambda_{s,n}+\lambda^\prime_{\xi_n,n}(s_n-s).\end{equation}
for some $\xi_n\in(s_n,s)$.  We note that Theorem \ref{mainpert} and Lemma \ref{derpres} prove the claim.  Next, we use Taylor's theorem once again to see that 
\begin{equation}\nonumber \lambda_{s_n,n}=1=\lambda_{s,n}+\lambda_{s,n}^\prime(s_n-s)+\lambda_{\xi_n,n}^{\prime\prime}O(\mu(I_n)^2)\end{equation}
for $\xi_n\in(s_{n},s)$.  Rearranging yields
\begin{equation}\nonumber \frac{s-s_n}{\mu(I_n)}=\frac{1}{-\lambda_{s,n}^\prime}\left[\frac{1-\lambda_{s,n}}{\mu(I_n)}+\lambda_{\xi_n,n}^{\prime\prime}O(\mu(I_n))\right].\end{equation}
Finally we let $n\to\infty$ observing that the right hand converges by Lemmas \ref{derpres} and \ref{secderpres}.  This completes the proof.
\end{proof} 

We note that as in the case of escape rates Proposition \ref{dimensiondiscrete} generalises easily to the case of finite unions of symbolic holes.  We now prove Theorem \ref{dimension}.

\begin{proof}Let $\{\epsilon_n\}_n$ be any monotonic sequence with $\epsilon_n\to 0$.  Fix $\eta>0$ and choose $U_n \subset B(z,\epsilon_n)\subset V_n$ which consist of finite unions of refinements of the Markov partition $\mathcal{R}$ such that

\begin{equation}\label{compare} (1-\eta)\mu(V_n)\leq \mu (B(z,\epsilon_n)) \leq (1+\eta)\mu( U_n).\end{equation}

From the proof of Theorem \ref{escratesexp}, it is clear that we may choose the families $\{U_n\}_n$ and $\{V_n\}_n$ so that they satisfy the hypotheses of Proposition \ref{dimensiondiscrete}.  Let $\underline{s}_n$ (resp. $\overline{s}_n$ denote the Hausdorff dimension of the non-trapped set with respect to the hole $U_n$ (resp. $V_n$.  Monotinicity of the Hausdorff dimension along with equation (\ref{compare}) yields
\begin{equation}\nonumber \frac{1}{1+\eta}\frac{s-\underline{s}_n}{\mu(U_n)} \leq \frac{s-s_n}{\mu(B(z,\epsilon_n))} \leq \frac{1}{1-\eta}\frac{s-\overline{s}_n}{\mu(V_n)},\end{equation}
for any $n$, combining this with Proposition \ref{dimensiondiscrete} and letting $\eta\to 0$ completes the proof.\end{proof}

\ack{We would like to thank the referee for his careful reading of the original version of this paper and his numerous careful comments.}

\bibliographystyle{abbrv}
\bibliography{escaperates}
\end{document}